\documentclass[12pt,draftcls,onecolumn]{IEEEtran}

\usepackage{times,comment,color,graphicx,setspace,bbm,mathdots,mathrsfs,amssymb,latexsym,amsfonts,amsmath,cite,stmaryrd,caption,pgf,tikz}

\usepackage{enumerate,epstopdf,ifpdf,psfrag,epsfig}

\ifpdf
  \DeclareGraphicsExtensions{.pdf,.jpg,.png}
\else
  \DeclareGraphicsExtensions{.eps}
\fi
\ifCLASSINFOpdf

\hyphenation{op-tical net-works semi-conduc-tor}

\newcommand{\V}{{\mathcal{V}}}
\newcommand{\s}{{\mathcal{S}}}
\newcommand{\e}{{\mathcal{E}}}
\newcommand{\h}{{\mathcal{H}}}

\DeclareMathOperator{\rank}{rank}
\DeclareMathOperator{\nullity}{nullity}

\DeclareMathOperator{\Rank}{{\mathbf{rank}}}
\DeclareMathOperator{\Nullity}{{\mathbf{nullity}}}
\DeclareMathOperator{\Null}{{\mathbf{null}}}


\begin{document}

\title{\'Eminence Grise Coalitions: On the Shaping of Public Opinion}

\author{Sadegh~Bolouki,~
        Roland~P.~Malham\'e,~
        Milad~Siami,~
        and~Nader~Motee
\thanks{S.~Bolouki, M.~Siami, and N.~Motee are with the Department
of Mechanics and Mechanical Engineering, Lehigh University, Bethlehem,
PA, 18015 USA e-mail: \{bolouki,siami,motee\}@lehigh.edu.}
\thanks{R.P.~Malham\'e is with the Department of Electrical Engineering, Polytechnique Montr\'eal, Montreal, QC, H3T 1J4 CA e-mail: roland.malhame@polymtl.ca.}
}

\maketitle


\begin{abstract}

	We consider a network of evolving opinions. It includes multiple individuals with first-order opinion dynamics defined in continuous time and evolving based on a general exogenously defined time-varying underlying graph. In such a network, for an arbitrary fixed initial time, a subset of individuals forms an \textit{\'eminence grise coalition}, abbreviated as EGC, if the individuals in that subset are capable of leading the entire network to agreeing on any desired opinion, through a cooperative choice of their own initial opinions. In this endeavor, the coalition members are assumed to have access to full profile of the underlying graph of the network as well as the initial opinions of all other individuals. While the complete coalition of individuals always qualifies as an EGC, we establish the existence of a minimum size EGC for an arbitrary time-varying network; also, we develop a non-trivial set of upper and lower bounds on that size. As a result, we show that, even when the underlying graph does not guarantee convergence to a global or multiple consensus, a generally restricted coalition of agents can steer public opinion towards a desired global consensus without affecting any of the predefined graph interactions, provided they can cooperatively adjust their own initial opinions. Geometric insights into the structure of EGC's are given. The results are also extended to the discrete time case where the relation with Decomposition-Separation Theorem is also made explicit.

\end{abstract}


\newtheorem{theorem}{Theorem}
\newtheorem{definition}{Definition}
\newtheorem{lemma}{Lemma}
\newtheorem{corollary}{Corollary}
\newtheorem{remark}{Remark}
\newtheorem{proposition}{Proposition}
\newtheorem{conjecture}{Conjecture}

%
\IEEEpeerreviewmaketitle


\section{Introduction}

	In this paper, we are mainly concerned with the occurrence of consensus in networks of individuals with opinions updated via a class of continuous time weighted distributed averaging algorithms characterized in general by an exogenous underlying chain of opinion update matrices, which behave like intensity matrices of a continuous time Markov chain. In such networks, \textit{consensus} is said to occur if all opinions converge to the same value as time grows large. Furthermore, \textit{Multiple consensus} is said to occur if each individual's opinion asymptotically converges to an individual limit. It is well known that such asymptotic behaviors relate directly to the properties of the Markov chain which underlies the opinion update dynamics. More specifically, the underlying chain of an opinion network may be such that consensus or multiple consensus occurs \textit{unconditionally}, i.e., irrespective of the values of initial opinions of the individuals in the network. The unconditional occurrence of consensus is proved to be equivalent to \textit{ergodicity} of the underlying chain \cite{Chatterjee:77}. There is a similar correspondence between the unconditional occurrence of multiple consensus and \textit{class-ergodicity} of the underlying chain \cite{Bol:13,Touri:3}.

	Ergodic and class-ergodic chains, i.e., chains leading to unconditional consensus or multiple consensus, have attracted an increasing attention in the literature in the past decade. Researchers of many different fields including robotics, social networks, economics, biology, etc., have been particularly interested in conditions under which a consensus algorithm guarantees consensus or multiple consensus to occur for an arbitrary choice of initial opinions. It is generally accepted that the earliest work on this class of opinion formation models was done in \cite{DeGroot:74}. The model was defined in discrete time, and the considered underlying chain was time-invariant. Later, more general cases were considered in \cite{Chatterjee:77}, where the authors also made explicit the relationship between consensus and ergodicity of the underlying chain. Some of the earliest significant results on consensus date back to \cite{Tsit:84,Tsit:86,Tsit:89a}. Interest in distributed consensus for agents moving in space was triggered by the numerical experiments in \cite{Vicsek:95} where a nonlinear algorithm was proposed for modeling evolution of multi-agent systems in discrete time. In this model, agents are assumed to have the same speed but different headings, and states are headings of agents. Using simulations, convergence to some kind of consensus (emerging behavior) was displayed in \cite{Vicsek:95}. A linearized version of the model in \cite{Vicsek:95} was considered in \cite{Jadba:03}, where sufficient conditions for consensus based on analyzing infinite products of stochastic matrices, consistent with those of \cite{Tsit:84,Tsit:86,Tsit:89a} are established. Following \cite{Jadba:03}, many works have focused on identifying the largest class of underlying update chains for which consensus occurs unconditionally. Because of their close relationship to our current work, we mention in particular \cite{Tsit:05,Moreau:05,Hend:06,Li:04,Lorenz:05,Hend:11,Touri:10a,Touri:10b,Touri:11c,Touri:3,Bolouki:ifac,Bolouki:11,Bolouki:12,Bolouki:12b}. In addition, \cite{Lorenz:05,Touri:10b,Touri:11c,Touri:3,Hend:11,Bolouki:ifac,Bolouki:11,Bolouki:12,Bolouki:12b,Bol:13} also addressed the unconditional multiple consensus problem, or equivalently class-ergodicity of the underlying chain. For the continuous time case, \cite{Hend:11} appears to provide the most general results thus far on consensus and multiple consensus. On the other hand, in our recent work \cite{Bol:13}, inspired by \cite{Touri:11c} and \cite{Sonin:08}, and to the best of our knowledge, we have identified for the discrete time case, the largest class to date of ergodic and class-ergodic chains.

	In contrast to the above papers, which are concerned with ``unconditional'' consensus, the current paper aims at providing some answers to the following questions: What if the underlying chain is not ergodic? How can consensus still be achieved in a network with absolutely no assumption on the underlying chain? In other words, for a network with a general time-varying underlying opinion update chain, having fixed the initial time, what can be said about particular (non-trivial) choices of initial opinions leading to a possible consensus? Geometric insights on the nature of the ``march'' towards consensus allow one to realize that such choices of initial opinion vectors form a vector space the dimension of which is related to the characteristics of the underlying chain. The fact that such initial opinion vectors form a vector space suggests the existence of a possibly small subgroup of individuals in the network who are \textit{naturally} capable of leading the whole group to eventually agree on any desired value \textit{only} by collectively adjusting their own initial opinions. The word ``naturally'' here refers to the fact that the subgroup does not need to manipulate the nature of the network, and particularly \textit{leaves all the interactions between any two individuals including themselves untouched}. They act like hidden leaders, or ``\'eminences grises'', not identifiable by title or position, yet who can, given time, thoroughly shape the ultimate public opinion. A subgroup with such leadership property is referred to as an \textit{\'Eminence Grise Coalition}, or simply EGC, in this work. The EGC's that a network admit are determined by the properties of the underlying chain of the network only. While it is trivial to establish the existence of at least one largest EGC, namely the universal coalition of individuals, one of our main points of interest in this work is to characterize the size and identity of the smallest coalition that can achieve public opinion shaping. Tight bounds on the size of that coalition are also of interest. The reasons why such individuals may want to act as a coalition can be multiple. Two such possibilities are: (i) They have been identified as key decision makers by a knowledgeable negotiator, have collectively agreed on a bargaining position, yet need to steer their peers towards the collective agreement, (ii) A shady opinion manipulator has identified them as key decision makers and has succeeded in ``buying out'' their collaboration.

	The rest of the paper is organized in such a way that no confusion arises between the continuous time and the discrete time cases. We explicitly deal with the continuous time case in the largest part of the paper, that is Sections \ref{problem setup}--\ref{full-rank chains}, and discuss the discrete time case in Section \ref{discrete time analysis}. More specifically, we explicitly state the problem setup in Section \ref{problem setup}, where we introduce the notion of \textit{rank} of a chain of matrices which is shown to be equal to the size of the smallest EGC of the network. In Section \ref{a geometric framework}, a geometric framework is developed to interpret the notion of rank of a chain and also obtain an upper bound for the $\text{rank}$, or equivalently the size of the smallest EGC of a consensus algorithm. This geometric framework proves to be useful in dealing with both the continuous time and the discrete time cases. We establish in Section \ref{lower bounds}, lower bounds on the $\text{rank}$ based on the existing notions in the literature, namely the so-called infinite flow graph and unbounded interactions graph of a chain. The $\text{rank}$ of time-invariant chains is discussed in Section \ref{rank of TI chains}. We address a large class of time-varying chains, the so-called Class $\mathcal{P}^*$, and their rank in particular, in Section \ref{Class P*}. It is shown that chains of the the two classes discussed in Sections \ref{rank of TI chains} and \ref{Class P*}, are examples of chains for which the bounds on $\text{rank}$ obtained earlier in Sections \ref{a geometric framework} and \ref{lower bounds} are actually attained. Full-rank chains, namely chains with $\text{rank}$ equal to the size of the network are characterized in Section \ref{full-rank chains}. In the process of characterizing full-rank chains, we also discover another upper bound on $\text{rank}$. In Section \ref{discrete time analysis}, we extend our analysis of the continuous time case to the discrete time case. As will be shown, the size of the smallest EGC is equal to the number of jets in the jet decomposition of the Sonin Decomposition Separation Theorem (see \cite{Sonin:08,Bol:13}). Concluding remarks and suggestions of future work end the paper in Section \ref{conclusion}.


\section{Notions and Terminology}
\label{problem setup}

 	The notions, preliminaries, and notation described in this section are for the purposes of the continuous time part of this paper, i.e., Sections \ref{problem setup}--\ref{full-rank chains}, although some may be consistent with the contents of Section \ref{discrete time analysis}, the discrete time analysis. Let $N$ be the number of individuals and $\V=\{1,\ldots,N\}$ be the set of individuals. Assume that $t$ stands for the continuous time index. Let a time-varying chain $\{A(t)\}_{t \geq 0}$ of square matrices of size $N$ be such that each matrix $A(t)$, $t \geq 0$, has zero row sum and non-negative off-diagonal entries and each entry $a_{ij}(t)$ of $A(t)$, $i,j \in \V$, is a measurable function. Continuous time chains of matrices, that we deal with in this paper, are assumed to have these properties. According to these constraints, $A(t)$ can be viewed as the evolution of the intensity matrix of a time inhomogeneous Markov chain. Let dynamics of an opinion network be described by the following continuous time distributed averaging algorithm:
	\begin{equation}
		\dot{x}(t) = A(t)x(t), \, t \geq t_0,
	\label{mc}
	\end{equation}
where $t_0 \geq 0$ is the initial time and $x(t) \in \mathbb{R}^N$ is the vector of opinions at each time instant $t \geq t_0$. Thus, $x_i(t)$ is the scalar opinion of individual $i$ at time $t \geq t_0$. Chain $\{A(t)\}_{t \geq 0}$, or simply $\{A(t)\}$, is referred to as the \textit{underlying chain} of the network with dynamics (\ref{mc}).

	Assume that $\Phi(t,\tau)$, $t \geq \tau \geq 0$ denotes the state transition matrix associated with chain $\{A(t)\}$. Therefore, for the network with dynamics (\ref{mc}), we must have:
	\begin{equation}
		x(t) = \Phi(t,\tau) x(\tau), \, \forall t \geq \tau \geq t_0.
	\label{mgeneral}
	\end{equation}
	From \cite[Section 1.3]{Brockett:70}, the Peano-Baker series of state transition matrix $\Phi(t,\tau)$, $t \geq \tau \geq 0$, associated with chain $\{A(t)\}$ is expressed as:
	\begin{equation}
		\begin{array}{ll}
			\Phi(t,\tau) = I_{N\times N}	& \hspace{-.1in} + \int_{\tau}^t A(\sigma_1)d\sigma_1 \vspace{.05in}\\
											& \hspace{-.1in} + \int_{\tau}^t A(\sigma_1) \int_{\tau}^{\sigma_1} A(\sigma_2) d\sigma_2 d\sigma_1 \vspace{.05in}\\
					    						& \hspace{-.1in} + \int_{\tau}^t A(\sigma_1) \int_{\tau}^{\sigma_1} A(\sigma_2) \int_{\tau}^{\sigma_2} A(\sigma_3) d\sigma_3 d\sigma_2 d\sigma_1 \vspace{.05in}\\
					    						& \hspace{-.1in}+ \cdots,
  		\end{array}
  	\label{state transition matrix}
	\end{equation}
where $I_{N \times N}$ denotes the identity matrix of size $N$. Remember that state transition matrix $\Phi(t,\tau)$ is invertible for every $t \geq \tau \geq 0$.

	We use the following notation throughout this paper: $\Phi_{i}(t,\tau)$ and $\Phi_{i,j}(t,\tau)$, $1 \leq i,j \leq N$, denote the $i$th column and the $(i,j)$th element of $\Phi(t,\tau)$ respectively. Moreover, the transposition of a matrix is indicated by the matrix followed by prime ($'$). We emphasize that $\Phi'_i(t,\tau)$ refers to the $i$th column of $\Phi'(t,\tau)$ (prime acts first). For an arbitrary vector $v \in \mathbb{R}^N$, and $1 \leq i \leq N$, $v_i$ denotes the $i$th element of $v$. Vectors of all zeros and all ones in $\mathbb{R}^N$ are indicated by $\mathbf{0}_N$ and $\mathbf{1}_N$ respectively. For an arbitrary subset $\s \subset \V$, $\V \backslash \s$ denotes the complement of $\s$ in $\V$.
	
	\begin{remark}
		Notice that $\Phi_{i,j}(t,\tau)$, $t \geq \tau \geq 0$, for a fixed $\tau$, can be viewed as a transition probability in a backward propagating inhomogeneous Markov chain. In particular, for every $t_2 \geq t_1 \geq \tau \geq 0$, we have:
  	\begin{equation}
    		\Phi_{i,j}(t_2,\tau) = \sum_{k} \Phi_{i,k}(t_2,t_1) \Phi_{k,j}(t_1,\tau),
  	\end{equation}
with the conditions:
  	\begin{equation}
    		\Phi_{i,j}(t,\tau) \geq 0,
  	\end{equation}
  	\begin{equation}
    		\sum_{j} \Phi_{i,j}(t,\tau) = 1,
  	\end{equation}
  	\begin{equation}
    		\Phi_{i,j}(\tau,\tau) = \delta_{ij},
  	\end{equation}
where $\delta_{ij}$ is the Kronecker symbol.
	\end{remark}


\subsection{\'Eminence Grise Coalition}

	\begin{definition}
	\label{EGC def}
	For an opinion network with dynamics (\ref{mc}), a subgroup of individuals $\s \subset \V$ is said to be an \textit{\'Eminence Grise Coalition} if for any arbitrary $x^* \in \mathbb{R}$ and any initialization of opinions of individuals in $\V \backslash \s$, there exists an initialization of opinions of individuals in $\s$ such that $\lim_{t \rightarrow \infty} x(t) = x^*.\textbf{1}_N$, i.e., all individuals asymptotically agree on $x^*$. The term \'Eminence Grise Coalition may also be referred to as acronym \textit{EGC}.
	\end{definition}
	
	From another point of view that also justifies the selection of the term \'Eminence Grise Coalition, an EGC of a network with dynamics (\ref{mc}) is a subgroup of individuals who are capable of leading the whole group towards a global agreement on any desired ultimate opinion only by properly initializing their own opinions, with the assumption that they are aware of the underlying chain of the network and initial opinions of the rest of individuals.
	
	\begin{lemma}
	\label{zero is enough}
		In an opinion network with dynamics (\ref{mc}), a subset $\s \subset \V$ is an EGC if and only if for any initialization of opinions of individuals in $\V \backslash \s$, there exists an initialization of opinions of individuals in $\s$ such that $\lim_{t \rightarrow \infty} x(t)= \mathbf{0}_N$.
	\end{lemma}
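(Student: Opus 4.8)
The plan is to dispatch the trivial direction first and then reduce the general EGC property to the seemingly weaker ``zero-target'' property by exploiting linearity. The forward direction is immediate: if $\s$ is an EGC, then specializing Definition~\ref{EGC def} to $x^* = 0$ yields exactly the stated conclusion, since $0 \cdot \mathbf{1}_N = \mathbf{0}_N$.

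For the converse, the key structural observation is that every $A(t)$ has zero row sums, so $A(t)\mathbf{1}_N = \mathbf{0}_N$; hence $\mathbf{1}_N$ is an equilibrium of (\ref{mc}), and consequently $\Phi(t,\tau)\mathbf{1}_N = \mathbf{1}_N$ for all $t \geq \tau \geq 0$. Combined with the linearity of (\ref{mc}) through the representation $x(t) = \Phi(t,t_0) x(t_0)$, this gives a shift principle: if $x(\cdot)$ solves (\ref{mc}) with $x(t_0) = w$, then $x(\cdot) + x^*\mathbf{1}_N$ solves (\ref{mc}) with initial condition $w + x^*\mathbf{1}_N$. So translating an initial opinion profile by a constant vector translates the whole trajectory, and in particular the limit (when it exists) by the same constant.

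Given this, I would argue as follows. Fix an arbitrary $x^* \in \mathbb{R}$ and an arbitrary initialization $z$ of the opinions of the individuals in $\V\backslash\s$. Apply the hypothesis to the initialization of $\V\backslash\s$ obtained by subtracting $x^*$ from each entry of $z$: this produces an initialization $u$ of the individuals in $\s$ for which the corresponding solution of (\ref{mc}) tends to $\mathbf{0}_N$. Now instead initialize the individuals in $\s$ with $u$ shifted by $x^*$ entrywise, while keeping $\V\backslash\s$ at $z$; by the shift principle the new trajectory is the old one plus $x^*\mathbf{1}_N$, hence converges to $x^*\mathbf{1}_N$. Since $x^*$ and $z$ were arbitrary, $\s$ is an EGC.

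I do not expect a genuine obstacle here: the only point requiring a moment's care is the affine bookkeeping — that adding or subtracting $x^*$ respects the partition of the coordinates into the $\s$-block (``free to choose'') and the $\V\backslash\s$-block (``arbitrary but prescribed''), which holds because a translation of $\mathbb{R}^N$ is a bijection acting blockwise. The substance of the lemma is thus just linearity of the dynamics together with the invariance of the consensus direction $\mathbf{1}_N$ under every transition matrix $\Phi(t,\tau)$, and the statement lets us henceforth reduce all EGC questions to steering the network to the origin.
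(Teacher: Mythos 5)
Your proposal is correct and follows essentially the same route as the paper: the forward direction by specializing to $x^*=0$, and the converse by shifting the prescribed initialization of $\V\backslash\s$ down by $x^*$, invoking the hypothesis to reach $\mathbf{0}_N$, and then translating the whole trajectory back up using the fact that $\Phi(t,t_0)\mathbf{1}_N=\mathbf{1}_N$. The paper phrases the last step as $\mathbf{1}_N$ being an eigenvector of $\Phi(t,t_0)$ with eigenvalue $1$, which is exactly your shift principle.
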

	
	\begin{proof}
		The ``only if'' part is obvious by setting $x^*=0$ in Definition \ref{EGC def}. Conversely, assume that $\s \subset \V$ has the property that for any initialization of individuals in $\V \backslash \s$, there exists an initialization of individuals in $\s$ such that all opinions asymptotically converge to zero. To show that $\s$ is an EGC according to Definition \ref{EGC def}, let arbitrary $x^* \in \mathbb{R}$ be the desired value of agreement and assume that for every $i \in \V \backslash \s$, the opinion of individual $i$ is initialized at $\hat{x}_i \in \mathbb{R}$, where $\hat{x}_i$ is arbitrary. We seek an initialization of opinions of individuals in $\s$ leading to an asymptotic agreement of all individuals on $x^*$. For a moment, let us assume that for every $i \in \V \backslash \s$, the opinion of individual $i$ was initialized at $\hat{x}_i - x^*$. For such an initialization, by the assumption on $\s$, there would be an initialization of opinions of individuals in $\s$, say at $\hat{x}_i$ for each individual $i \in \s$, such that all opinions would asymptotically converge to zero. In other words, if  the individual opinions in the network with dynamics (\ref{mc}) were initialized as:
		\begin{equation}
			x_i(0) =
			\begin{cases}
				\hat{x}_i - x^*	& \text{ if } i \in \V \backslash \s\\
				\hat{x}_i		& \text{ if } i \in \s
			\end{cases}
		\end{equation}
then, $\lim_{t \rightarrow \infty} x(t)= \mathbf{0}_N$. Now, the following initialization, which is basically a translation of the previous initialization by $x^*$, will lead to an agreement on $x^*$:
		\begin{equation}
			x_i(0) =
			\begin{cases}
				\hat{x}_i			& \text{ if } i \in \V \backslash \s\\
				\hat{x}_i+x^*		& \text{ if } i \in \s
			\end{cases}
		\end{equation}
		Agreement on $x^*$ is easily proved from the previous agreement on zero and noticing that translations are preserved in consensus dynamics (\ref{mc}) since $\Phi(t,t_0)$, for every $t \geq t_0$, has an eigenvector $\mathbf{1}_N$ corresponding to eigenvalue 1. Thus, for an arbitrary initialization of individuals in $\V \backslash \s$, we found an initialization of individuals in $\s$ such that all opinions asymptotically converge to the desired value $x^*$, which completes the proof.
	\end{proof}
	
	Our primary objective in this work is characterizing the smallest EGC in an opinion network with dynamics described by (\ref{mc}). In particular, the size of the smallest EGC is of interest.


\subsection{Rank of a Chain}
	
	We now define several operators for chains of matrices. $\mathbf{Bold}$ style is used for chain operators in this paper to distinguish them from matrix operators that are in $\mathrm{roman}$ style. Let $\{A(t)\}$ be a chain of matrices and $\Phi(t,\tau)$, $t \geq \tau \geq 0$ be its associated state transition matrix.
	\begin{definition}
	\label{null space}
  		The \textit{null space} of chain $\{A(t)\}$ at time $\tau \geq 0$, denoted by $\Null_{\tau}(A)$, is defined by:
  		\begin{equation}
    			\Null_{\tau}(A) \triangleq \Big\{ v \in \mathbb{R}^N | \lim_{t \rightarrow \infty} \big( \Phi(t,\tau)v \big) = \textbf{0}_N  \Big\}.
    		\label{null-space}
  		\end{equation}
	\end{definition}
	It is straightforward to show that $\Null_{\tau}(A)$ is a vector space for every $\tau \geq 0$.
	
	\begin{lemma}
		The dimension of vector space $\Null_{\tau}(A)$, $\tau \geq 0$, is independent of $\tau$.
	\end{lemma}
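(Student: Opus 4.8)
The key is to relate null spaces at two different times $\tau_1 \le \tau_2$ via the state transition matrix $\Phi(\tau_2,\tau_1)$, which is invertible (as noted after the Peano–Baker series). The plan is to show that $\Phi(\tau_2,\tau_1)$ maps $\Null_{\tau_1}(A)$ bijectively onto $\Null_{\tau_2}(A)$, which immediately forces the two spaces to have the same dimension; since $\tau_1 \le \tau_2$ are arbitrary, the dimension is independent of $\tau$.

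First I would fix $0 \le \tau_1 \le \tau_2$ and take any $v \in \Null_{\tau_1}(A)$, so that $\lim_{t\to\infty}\Phi(t,\tau_1)v = \mathbf{0}_N$. Using the semigroup (composition) property of the state transition matrix, $\Phi(t,\tau_1) = \Phi(t,\tau_2)\Phi(\tau_2,\tau_1)$ for all $t \ge \tau_2$, so $\Phi(t,\tau_2)\big(\Phi(\tau_2,\tau_1)v\big) = \Phi(t,\tau_1)v \to \mathbf{0}_N$. Hence $\Phi(\tau_2,\tau_1)v \in \Null_{\tau_2}(A)$, i.e. $\Phi(\tau_2,\tau_1)\big(\Null_{\tau_1}(A)\big) \subseteq \Null_{\tau_2}(A)$. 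Conversely, given $w \in \Null_{\tau_2}(A)$, set $v = \Phi(\tau_2,\tau_1)^{-1} w = \Phi(\tau_1,\tau_2)w$ (using invertibility); then $\Phi(t,\tau_1)v = \Phi(t,\tau_2)\Phi(\tau_2,\tau_1)v = \Phi(t,\tau_2)w \to \mathbf{0}_N$, so $v \in \Null_{\tau_1}(A)$ and $w = \Phi(\tau_2,\tau_1)v$. Therefore $\Phi(\tau_2,\tau_1)$ restricts to a linear isomorphism from $\Null_{\tau_1}(A)$ onto $\Null_{\tau_2}(A)$, and in particular $\dim \Null_{\tau_1}(A) = \dim \Null_{\tau_2}(A)$.

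Since this holds for every pair $\tau_1 \le \tau_2$, and any two nonnegative reals can be compared, the dimension $\dim \Null_{\tau}(A)$ takes the same value for all $\tau \ge 0$, which is the claim. I do not anticipate a serious obstacle here: the only points requiring care are invoking the composition identity $\Phi(t,\tau_1) = \Phi(t,\tau_2)\Phi(\tau_2,\tau_1)$ for the state transition matrix and the invertibility of $\Phi(\tau_2,\tau_1)$, both of which are standard and already recalled in the text. One should also note that a linear map restricted to a subspace need not send it \emph{onto} another subspace in general, so the explicit construction of the preimage $v = \Phi(\tau_1,\tau_2)w$ is what pins down surjectivity and hence the equality (rather than merely an inequality) of dimensions.
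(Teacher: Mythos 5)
Your proposal is correct and follows essentially the same route as the paper: both use the invertibility of $\Phi(\tau_2,\tau_1)$ together with the composition property $\Phi(t,\tau_1)=\Phi(t,\tau_2)\Phi(\tau_2,\tau_1)$ to exhibit a linear bijection between $\Null_{\tau_1}(A)$ and $\Null_{\tau_2}(A)$. You merely spell out the inclusion and surjectivity steps that the paper leaves as ``not difficult to see'' (with the minor caveat that writing the inverse as $\Phi(\tau_1,\tau_2)$ slightly abuses the paper's convention that $\Phi(t,\tau)$ is defined only for $t\geq\tau$; it is cleaner to just use $\Phi(\tau_2,\tau_1)^{-1}$).
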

	
	\begin{proof}
		Let $\tau_2 > \tau_1 \geq 0$ be two arbitrary time instants. Define linear operator $\phi_{\tau_2,\tau_1}:\mathbb{R}^N \rightarrow \mathbb{R}^N$ by:
  		\begin{equation}
    			\phi_{\tau_2,\tau_1}(v) \triangleq \Phi(\tau_2,\tau_1)v, \, \forall v \in \mathbb{R}^N.
    		\label{operator1}
  		\end{equation}
		Noticing that $\Phi(\tau_2,\tau_1)$ is invertible, it is not difficult to see that operator $\phi_{\tau_2,\tau_1}$ creates a one-to-one correspondence between the two vector spaces $\Null_{\tau_1}(A)$ and $\Null_{\tau_2}(A)$. As a result, the two vector spaces are of equal dimensions.
	\end{proof}
	
	\begin{definition}
		The constant dimension of $\Null_{\tau}(A)$, $\tau \geq 0$, which is independent of $\tau$, is called \textit{nullity} of chain $\{A(t)\}$ and is denoted by $\Nullity(A)$. Moreover, the \textit{rank} of chain $\{ A(t) \}$ is defined by:
		\begin{equation}
  			\Rank(A) \triangleq N - \Nullity(A).
		\label{rank-nullity}
		\end{equation}
	\end{definition}
	
	The following theorem suggests that one can investigate the size of the smallest EGC via the notion of $\text{rank}$.

	\begin{theorem}
  		For an opinion network with dynamics described by (\ref{mc}), the size of the smallest EGC is $\Rank(A)$.
  	\label{rank=EGC}
	\end{theorem}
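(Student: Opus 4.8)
The plan is to combine Lemma \ref{zero is enough} with a coordinate-projection argument on the vector space $\Null_{t_0}(A)$.

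First I would reformulate the EGC property in purely linear-algebraic terms. By Definition \ref{null space}, the trajectory of (\ref{mc}) with $x(t_0)=v$ satisfies $\lim_{t\to\infty}x(t)=\mathbf{0}_N$ exactly when $v\in\Null_{t_0}(A)$. Let $P_{\s}:\mathbb{R}^N\to\mathbb{R}^{N-|\s|}$ be the projection that keeps only the coordinates indexed by $\V\backslash\s$. Then the hypothesis of Lemma \ref{zero is enough} --- every assignment of the opinions of $\V\backslash\s$ can be completed by some assignment of the opinions of $\s$ to a vector whose trajectory vanishes --- is precisely the statement that $P_{\s}$ maps $\Null_{t_0}(A)$ onto $\mathbb{R}^{N-|\s|}$. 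Hence $\s$ is an EGC if and only if $P_{\s}$ restricted to $\Null_{t_0}(A)$ is surjective.

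From this the lower bound is immediate: surjectivity of $P_{\s}$ on $\Null_{t_0}(A)$ forces $\Nullity(A)=\dim\Null_{t_0}(A)\geq N-|\s|$, i.e.\ $|\s|\geq N-\Nullity(A)=\Rank(A)$; so every EGC, in particular a smallest one (which exists because $\V$ itself is an EGC, the criterion being vacuous when $\V\backslash\s=\emptyset$), has at least $\Rank(A)$ members. For the matching upper bound I would exhibit an EGC of size exactly $\Rank(A)$: fix a basis $v^{(1)},\dots,v^{(d)}$ of $\Null_{t_0}(A)$ with $d=\Nullity(A)$, form the $d\times N$ matrix $M$ with these rows, and use $\rank M=d$ to select an index set $\mathcal{J}\subset\V$, $|\mathcal{J}|=d$, whose column-submatrix $M_{\mathcal{J}}$ is invertible. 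Setting $\s:=\V\backslash\mathcal{J}$ gives $|\s|=N-d=\Rank(A)$, and since a general element of $\Null_{t_0}(A)$ is $M^{\top}y$ with $y\in\mathbb{R}^d$, its $P_{\s}$-image corresponds to $M_{\mathcal{J}}^{\top}y$, which sweeps out all of $\mathbb{R}^d$ because $M_{\mathcal{J}}$ is nonsingular; so $\s$ is an EGC by the criterion above. Together with the lower bound, the smallest EGC has size $\Rank(A)$.

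The argument is elementary once the reformulation in the first step is in place, and that step is the only one requiring care: one must check that ``every partial assignment on $\V\backslash\s$ extends within $\Null_{t_0}(A)$'' is literally the surjectivity of the coordinate projection $P_{\s}$ restricted to $\Null_{t_0}(A)$ --- after which the rank--nullity identity and the choice of a maximal nonsingular submatrix close both bounds. I would also note that although $\Null_{t_0}(A)$, and therefore the identity of a minimum-size EGC, may depend on $t_0$, its dimension does not (by the lemma preceding the definition of $\Nullity$), so the answer $\Rank(A)$ is the same for every initial time.
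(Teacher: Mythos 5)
Your proposal is correct and follows essentially the same route as the paper: both reduce to Lemma \ref{zero is enough}, observe that the EGC condition amounts to being able to complete any partial assignment on $\V\backslash\s$ to a vector of $\Null_{t_0}(A)$, get the lower bound from $\dim\Null_{t_0}(A)\geq N-|\s|$, and get the upper bound by choosing a maximal linearly independent set of rows (equivalently, your columns of $M$) of a basis matrix of $\Null_{t_0}(A)$ via the row-rank equals column-rank argument. Your explicit surjectivity-of-projection formulation merely spells out the step the paper labels ``straightforward to show.''
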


	\begin{proof}
  		To simplify the proof, let $r \triangleq \Rank(A)$ and $h$ be the size of the smallest EGC. Our aim is to show that $r=h$. Equivalently, we prove, in the following, that $h \leq r$ and $r \leq h$.
		
  		$(h \leq r)$: We show that there is an EGC of size $r$. From Lemma \ref{zero is enough}, it suffices to show that there exists a subset $\s \subset \V$ of size $r$ with the property that for any initialization of the opinions of individuals in $\V\backslash \s$, there exists an initialization of the opinions of individuals in $\s$ such that all opinions asymptotically converge to zero. Note that $\Null_{t_0}(A)$ is a vector space with dimension $\Nullity(A)=N-r$. Let $\beta_1,\ldots,\beta_{N-r}$ be a basis of $\Null_{t_0}(A)$. Notice that the column-rank of matrix
		\begin{equation}
			\begin{bmatrix}
				\beta_1 | \cdots | \beta_{N-r}
			\end{bmatrix}
		\label{basis}
		\end{equation}
		is $N-r$, and so is its row-rank. Thus, matrix (\ref{basis}) has $N-r$ linearly independent rows. Note that the choice of the $N-r$ linearly independent rows is not necessarily unique. Assume that $i_1,\ldots,i_{N-r}$ are the indices of $N-r$ independent rows of matrix (\ref{basis}). It is straightforward to show that subset $\s \subset \V$ defined by:
		\begin{equation}
			\s = \V \backslash \{i_1,\ldots,i_{N-r}\},
		\end{equation}
		has the desired property.
		
		$(r \leq h)$: Since there exists an EGC of size $h$, there are $N-h$ individuals such that no matter what their initial opinions are, there is an initial opinion vector that results in all opinions asymptotically going to zero, or equivalently, an initial opinion vector that belongs to $\Null_{t_0}(A)$. Thus, vector space $\Null_{t_0}(A)$ has dimension greater than or equal to $N-h$, i.e., $N-r \geq N-h$.
	\end{proof}

	\begin{remark}
  		Another point of interest regarding the issue of consensus, that we will not further discuss in this work, is that of the nature of the set of initial opinion vectors leading to consensus in the network with dynamics (\ref{mc}); more precisely:
  		\begin{equation}
    			\{ x(t_0) |\, \exists x^* \in \mathbb{R}: \lim_{t \rightarrow \infty}x(t) = x^*. \textbf{1}_N \},
    		\label{consensus-set}
  		\end{equation}
  		It is straightforward to see that set (\ref{consensus-set}) is the vector space generated by $\Null_{t_0}(A)$ and $\textbf{1}_N$. Consequently, vector space (\ref{consensus-set}) has dimension $\Nullity(A)+1$.
	\end{remark}

	Keeping Theorem \ref{rank=EGC} in mind, we focus on the notion of $\text{rank}$ in the rest of the paper. In the following, we give the continuous time version of the definition of $l_1$-approximation initially introduced in \cite{Touri:10b} for discrete time chains.

	\begin{definition}
  		Chain $\{A(t)\}$ is said to be an \textit{$l_1$-approximation} of chain $\{B(t)\}$ if:
  		\begin{equation}
    			\int_{0}^{\infty} \| A(t)-B(t) \| dt < \infty,
  		\end{equation}
  	where for convenience only, the norm refers to the \textit{max norm}, i.e., the maximum of the absolute values of the matrix elements.
	\end{definition}

	It is not difficult to show that $l_1$-approximation is an equivalence relation in the set of chains that are candidates of the underlying chain of an opinion network. The importance of the $l_1$-approximation notion in this work comes from the following lemma. The proof is eliminated due to its similarity to the proof of \cite[Lemma 1]{Touri:10b}.

	\begin{lemma}
  		The $\text{rank}$ of a chain is invariant under an $l_1$-approximation.
  	\label{rank and approximation}
	\end{lemma}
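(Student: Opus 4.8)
The plan is to reduce the statement to its discrete-time counterpart, \cite[Lemma~1]{Touri:10b}, the common engine being a perturbation bound for state transition matrices. Write $E(t):=A(t)-B(t)$, so that $\int_0^\infty\|E(t)\|\,dt<\infty$, and let $\Phi^A,\Phi^B$ denote the state transition matrices of $\{A(t)\}$ and $\{B(t)\}$. Since $\Rank(\cdot)=N-\Nullity(\cdot)$, it suffices to show $\Nullity(A)=\Nullity(B)$; equivalently, that the subspaces $\Null_{\tau}(A)$ and $\Null_{\tau}(B)$ have the same dimension for some, hence any, $\tau$.

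The first step is the estimate
\begin{equation}
\big\|\Phi^A(t,\tau)-\Phi^B(t,\tau)\big\|\ \le\ \int_{\tau}^{t}\|E(\sigma)\|\,d\sigma,\qquad t\ge\tau\ge 0.
\label{pert-est}
\end{equation}
To obtain it, fix $v\in\mathbb{R}^N$ and set $u(t):=\Phi^A(t,\tau)v-\Phi^B(t,\tau)v$. Then $\dot u(t)=B(t)u(t)+E(t)\Phi^A(t,\tau)v$ with $u(\tau)=\mathbf{0}_N$, so variation of constants gives $u(t)=\int_{\tau}^{t}\Phi^B(t,\sigma)E(\sigma)\Phi^A(\sigma,\tau)v\,d\sigma$. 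Both $\Phi^A(\sigma,\tau)$ and $\Phi^B(t,\sigma)$ are row-stochastic, hence nonexpansive in the $l_\infty$ norm, and \eqref{pert-est} follows on taking a supremum over $\|v\|\le1$ (the induced norm being equivalent to the max norm up to a factor depending only on $N$). Two consequences will be used: the right-hand side of \eqref{pert-est} is dominated, uniformly in $t$, by $c_\tau:=\int_{\tau}^{\infty}\|E(\sigma)\|\,d\sigma$, and $c_\tau\to 0$ as $\tau\to\infty$.

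The second step discretizes. Put $W^A(n):=\Phi^A(n+1,n)$ and $W^B(n):=\Phi^B(n+1,n)$ for $n\in\mathbb{N}$; these are row-stochastic, and their products from $n$ to $m$ equal $\Phi^A(m,n)$ and $\Phi^B(m,n)$. Because $\|\Phi^A(t,n)\|\le1$ for all $t\ge n$, a vector $v$ lies in $\Null_n(A)$ iff $\Phi^A(m,n)v\to\mathbf{0}_N$ along integers $m$ (write $\Phi^A(t,n)=\Phi^A(t,\lfloor t\rfloor)\Phi^A(\lfloor t\rfloor,n)$ and bound the first factor by $1$); hence the discrete chains $\{W^A(n)\}$ and $\{W^B(n)\}$ have the same nullities, and the same ranks, as $\{A(t)\}$ and $\{B(t)\}$. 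Summing \eqref{pert-est} over unit intervals gives $\sum_{n}\|W^A(n)-W^B(n)\|\le\int_0^\infty\|E(\sigma)\|\,d\sigma<\infty$, so $\{W^A(n)\}$ is a discrete-time $l_1$-approximation of $\{W^B(n)\}$. Applying \cite[Lemma~1]{Touri:10b} to these chains yields $\Rank(A)=\Rank(B)$, which is the claim.

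The step I expect to be genuinely non-routine is the last one, and it is worth isolating \emph{why}. The bound \eqref{pert-est} only says that $\Phi^A(t,\tau)$ and $\Phi^B(t,\tau)$ stay within $c_\tau$ of each other for all $t$; from this one cannot conclude $\Null_\tau(A)=\Null_\tau(B)$, and indeed these subspaces generally differ---already for $N=2$ an $l_1$-perturbation shifts the limiting consensus weights and so tilts the null line---so the assertion is precisely that the \emph{dimension}, a discrete invariant, is unaffected. Converting the $O(c_\tau)$ comparison into an exact dimension equality is exactly what requires structural input on the asymptotics of products of stochastic matrices---the finite jet / Decomposition--Separation picture---whose combinatorial type is $l_1$-stable because it is governed by convergence versus divergence of integrals of the entries of $A(\cdot)$, a property manifestly invariant under $l_1$-approximation (this is also why the number of jets, which the paper later identifies with $\Rank$, is preserved). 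That is precisely what is carried out in discrete time in \cite{Touri:10b}; alternatively one may transcribe that argument directly to continuous time, with \eqref{pert-est} playing the role of the discrete product estimate, which is the route the paper alludes to.
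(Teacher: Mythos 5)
The paper itself gives no argument here: it omits the proof and points to the proof of \cite[Lemma 1]{Touri:10b}, the intended route being a direct transcription of that discrete-time argument to continuous time. Your proposal takes a genuinely different, and in my view cleaner, route: you prove the continuous-time perturbation bound $\|\Phi^A(t,\tau)-\Phi^B(t,\tau)\|\leq C\int_\tau^t\|A(\sigma)-B(\sigma)\|\,d\sigma$ by variation of constants plus non-expansiveness of row-stochastic transition matrices in the $\ell_\infty$ norm, then reduce the whole problem to discrete time by sampling at integer times, checking correctly that the sampled chains $\{\Phi^A(n+1,n)\}$ are stochastic, have the same null spaces as the continuous chains (since the interpolating factors are non-expansive and the transition matrices are invertible, so the nullity is already constant in $\tau$), and are $\ell_1$-approximations of each other. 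What this buys is that the continuous-time lemma becomes a corollary of its discrete-time counterpart rather than requiring a parallel proof; what it costs is that everything now hangs on the discrete-time statement you invoke in the last step.

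That last step is where you should be careful, and you are candid about it: the notion of $\Rank$ of a chain is introduced in the present paper, so \cite[Lemma 1]{Touri:10b} cannot literally assert that the discrete-time rank is $\ell_1$-invariant. What that lemma supplies is the uniform product estimate and the preservation of the asymptotic (mutual-ergodicity/limit) structure; converting the resulting $O(c_\tau)$ closeness of $\Phi^A$ and $\Phi^B$ into equality of the \emph{dimensions} of $\Null_\tau(A)$ and $\Null_\tau(B)$ still requires the adaptation you describe in your closing paragraph (via the jet decomposition, or via the paper's limiting-polytope picture together with $c_\tau\to 0$). So your write-up is a correct and more explicit skeleton than the paper's, but it defers the same crux that the paper defers; if you want a self-contained proof, that dimension-equality step is the one you must actually carry out rather than cite.
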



\subsection{Ergodicity and Class-Ergodicity}
	
	Several other definition related to chains of matrices will be needed and are given as follows.
	
	\begin{definition}
		Chain $\{A(t)\}$ is said to be \textit{ergodic} if for every $\tau \geq 0$, its associated state transition matrix $\Phi(t,\tau)$ converges to a matrix with equal rows as $t \rightarrow \infty$.
	\end{definition}
		
	From \cite{Chatterjee:77}, we know that ergodicity of $\{A(t)\}$ is equivalent to the occurrence of unconditional consensus in (\ref{mc}).
	
	\begin{definition}
		Chain $\{A(t)\}$ is \textit{class-ergodic} if for every $\tau \geq 0$, $\lim_{t \rightarrow \infty}\Phi(t,\tau)$ exists but has possibly distinct rows.
	\end{definition}
	
	It is known that chain $\{ A(t) \}$ is class-ergodic if and only if multiple consensus occurs in (\ref{mc}) unconditionally (see \cite{Bol:13,Touri:3}). We define, in what follows, the ergodicity classes of a chain according to \cite{Touri:10b}.
	\begin{definition}
	\label{ergodicity classes}
		For an opinion network with state transition matrix $\Phi(t,\tau)$, $t \geq \tau \geq 0$, two individuals $i,j \in \V$ are said to be \textit{mutually weakly ergodic} if and only if for every $\tau \geq 0$:
  		\begin{equation}
    			\lim_{t \rightarrow \infty} \| \Phi'_i (t,\tau) - \Phi'_j(t,\tau) \| = 0.
		\label{w-m-e}
  		\end{equation}
	\end{definition}
	
	It is easy to see that the relation of being mutually weakly ergodic is an equivalence relation on $\V$. The equivalence classes of this relation are referred to as \textit{ergodicity classes} in this paper. Indeed, these equivalence classes form a partitioning of $\V$, and while in some cases they may simply be singletons, they can always be defined  for an arbitrary chain \{A(t)\}. If chain $\{A(t)\}$ is class-ergodic, i.e,. $\lim_{t \rightarrow \infty} \Phi'_i (t,\tau)$ exists for every $i \in \V$ and $\tau \geq 0$, then $i,j \in \V$ are in the same ergodicity class if $\lim_{t \rightarrow \infty} \Phi'_i (t,\tau) = \lim_{t \rightarrow \infty} \Phi'_i (t,\tau)$, for every $\tau \geq 0$. We refer to the ergodicity classes of a class-ergodic chain as \textit{ergodic classes}.


\section{A Geometric Interpretation of the $\text{Rank}$}
\label{a geometric framework}

	In this Section, we employ a geometric approach to analyze the asymptotic properties of a chain of matrices . This approach, which can be used for both the continuous and discrete time cases, will help us to (i) geometrically interpret the $\text{rank}$ of a general time-varying chain, (ii) identify an upper bound for the $\text{rank}$, and (iii) investigate the limiting behavior of a large class of time-varying chains, namely Class $\mathcal{P}^*$ as discussed in Section \ref{Class P*}.

	For time-varying chain $\{A(t)\}_{t \geq 0}$, define $C_{t,\tau}$, $t \geq \tau \geq 0$ as the convex hull of points in $\mathbb{R}^N$ corresponding to the columns of the transpose of associated state transition matrix $\Phi(t,\tau)$. Note that $C_{t,\tau}$ is a polytope, with no more than $N$ vertices, in $\mathbb{R}^N$. We recall that each column of $\Phi'(t,\tau)$ is a stochastic vector, i.e., its elements are non-negative and add up to 1. We now have the following lemma regarding convex hull $C_{t,\tau}$.

	\begin{lemma}
  		For every $t_2 \geq t_1 \geq \tau$, we have: $C_{t_2,\tau} \subset C_{t_1,\tau}$, i.e., polytopes $C_{t,\tau}$, for an arbitrary fixed $\tau$, form a monotone decreasing sequence of polytopes in $\mathbb{R}^N$.
	\label{convex-hull}
	\end{lemma}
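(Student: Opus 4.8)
The plan is to exploit the composition (semigroup) property of the state transition matrix together with the fact that the rows of $\Phi(t_2,t_1)$ are stochastic vectors. First I would recall that $\Phi(t_2,\tau) = \Phi(t_2,t_1)\Phi(t_1,\tau)$ for all $t_2 \geq t_1 \geq \tau$, which upon transposition gives $\Phi'(t_2,\tau) = \Phi'(t_1,\tau)\,\Phi'(t_2,t_1)$. Reading this column by column, the $i$th column of $\Phi'(t_2,\tau)$ equals $\Phi'(t_1,\tau)$ applied to the $i$th column of $\Phi'(t_2,t_1)$; but the $i$th column of $\Phi'(t_2,t_1)$ is precisely the $i$th row of $\Phi(t_2,t_1)$, whose entries $\Phi_{i,k}(t_2,t_1)$ are non-negative and sum to $1$ (as recorded earlier). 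Hence
\[
\Phi'_i(t_2,\tau) \;=\; \sum_{k=1}^{N} \Phi_{i,k}(t_2,t_1)\,\Phi'_k(t_1,\tau),
\]
which exhibits each column of $\Phi'(t_2,\tau)$ as a convex combination of the columns of $\Phi'(t_1,\tau)$.

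Consequently, every vertex of the polytope $C_{t_2,\tau}$ --- being one of the points $\Phi'_i(t_2,\tau)$ --- lies in $C_{t_1,\tau}$, the convex hull of the points $\Phi'_k(t_1,\tau)$. Since $C_{t_1,\tau}$ is convex and contains all vertices of $C_{t_2,\tau}$, it contains their convex hull, i.e. $C_{t_2,\tau} \subseteq C_{t_1,\tau}$. Applying this with $t_1$ fixed and $t_2$ ranging over $[t_1,\infty)$, and then letting $t_1$ vary, yields the asserted monotone decreasing nesting of the polytopes $\{C_{t,\tau}\}_{t \geq \tau}$ for each fixed $\tau$.

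There is no serious obstacle here; the only point requiring care is the transpose bookkeeping --- making sure that ``row of $\Phi$'' and ``column of $\Phi'$'' are matched correctly, so that the coefficients in the convex combination are genuinely the entries of a stochastic vector and not of a column of $\Phi(t_2,t_1)$ (which need not sum to $1$). One should also note in passing that $C_{t,\tau}$ has at most $N$ vertices simply because it is by definition the convex hull of the $N$ points $\Phi'_1(t,\tau),\ldots,\Phi'_N(t,\tau)$, so no separate argument about the polytope structure is needed; invertibility of $\Phi$ plays no role in this particular lemma.
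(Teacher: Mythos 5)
Your proof is correct and follows essentially the same route as the paper: both use the composition property $\Phi'(t_2,\tau)=\Phi'(t_1,\tau)\Phi'(t_2,t_1)$ and the row-stochasticity of $\Phi(t_2,t_1)$ to exhibit each column of $\Phi'(t_2,\tau)$ as a convex combination of the columns of $\Phi'(t_1,\tau)$. Your extra care with the transpose bookkeeping only makes explicit what the paper states in one line.
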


	\begin{proof}
		Note that:
  		\begin{equation}
    			\Phi(t_2,\tau) = \Phi(t_2,t_1)\Phi(t_1,\tau),
  		\label{nested-1}
  		\end{equation}
  		or equivalently,
  		\begin{equation}
    			\Phi'(t_2,\tau) =  \Phi'(t_1,\tau) \Phi'(t_2,t_1)
  		\label{nested}
  		\end{equation}
		Since $\Phi'(t_2,t_1)$ is a column-stochastic matrix, relation (\ref{nested}) implies that each column of $\Phi'(t_2,\tau)$ is a convex combination of the columns of $\Phi'(t_1,\tau)$. Therefore, each column of $\Phi'(t_2,\tau)$ lies in or on $C_{t_1,\tau}$, and the lemma is proved.
	\end{proof}

	Lemma \ref{convex-hull} shows that for a fixed $\tau \geq 0$, polytopes $C_{t,\tau}$'s, $t \geq \tau$, are nested in $\mathbb{R}^N$. An example of these nested polytopes projected on a two-dimensional subspace of $\mathbb{R}^N$ is depicted in Fig. \ref{Fig1}.

	\begin{figure}[h]
  		\begin{center}
  			\captionsetup{justification=centering}
    			\includegraphics[scale=1]{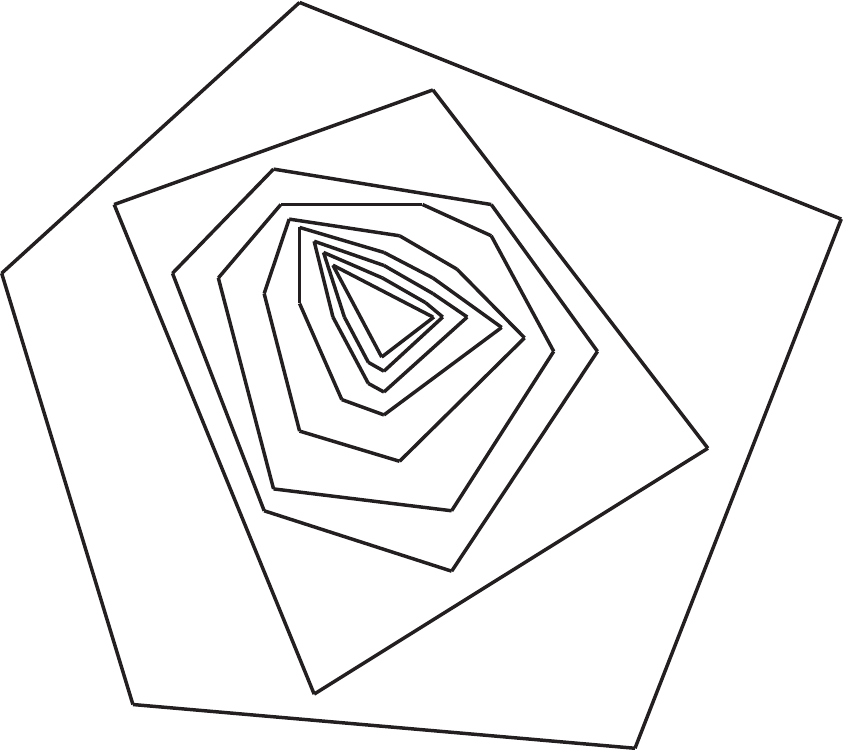}
    			\caption{Nested polygons converging to a triangle.}
  	\label{Fig1}
		\end{center}
	\end{figure}
	
	Note that for every $\tau \geq 0$, $\lim_{t \rightarrow \infty} C_{t,\tau}$ exists and is also a polytope in $\mathbb{R}^N$ due to the existence of a uniform upper bound, namely $N$, on the number of vertices of the nested polytopes. Let $C_{\tau}$ denote the limiting polytope and $c_{\tau}$ be the number of its vertices.

	\begin{lemma}
  		$c_{\tau}$, $\tau \geq 0$, is independent of $\tau$.
  	\label{fixed vertices}
	\end{lemma}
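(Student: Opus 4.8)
The plan is to relate the vertices of $C_{\tau_2}$ to those of $C_{\tau_1}$ via the invertible transition matrix $\Phi(\tau_2,\tau_1)$, exactly as was done for $\Null_\tau(A)$ in the proof that $\Nullity(A)$ is independent of $\tau$. Concretely, I would fix $\tau_2 > \tau_1 \geq 0$ and exploit the semigroup identity $\Phi(t,\tau_2) = \Phi(t,\tau_2)$ together with $\Phi(t,\tau_1) = \Phi(t,\tau_2)\Phi(\tau_2,\tau_1)$, i.e., on the transposed side, $\Phi'(t,\tau_1) = \Phi'(\tau_2,\tau_1)\,\Phi'(t,\tau_2)$. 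This says each column of $\Phi'(t,\tau_1)$ is the image under the fixed \emph{linear} map $w \mapsto \Phi'(\tau_2,\tau_1)w$ of the corresponding column of $\Phi'(t,\tau_2)$. Hence $C_{t,\tau_1} = \Phi'(\tau_2,\tau_1)\bigl(C_{t,\tau_2}\bigr)$ for every $t \geq \tau_2$, since an affine (here linear) image of a convex hull of a point set is the convex hull of the images.

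Next I would pass to the limit. Since $\Phi'(\tau_2,\tau_1)$ is a fixed invertible matrix, the map $L \triangleq \Phi'(\tau_2,\tau_1)(\cdot)$ is a linear homeomorphism of $\mathbb{R}^N$, so it commutes with the (Hausdorff) limit of the nested sequence: $C_{\tau_1} = \lim_{t\to\infty} C_{t,\tau_1} = \lim_{t\to\infty} L\bigl(C_{t,\tau_2}\bigr) = L\bigl(\lim_{t\to\infty} C_{t,\tau_2}\bigr) = L\bigl(C_{\tau_2}\bigr)$. An invertible linear map sends a polytope to a polytope and carries vertices bijectively to vertices (a point $p$ is a vertex of a polytope $P$ iff it is not a convex combination of other points of $P$, and this property is preserved under any bijective affine map). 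Therefore $C_{\tau_1}$ and $C_{\tau_2}$ have the same number of vertices, i.e., $c_{\tau_1} = c_{\tau_2}$; as $\tau_1,\tau_2$ were arbitrary, $c_\tau$ is independent of $\tau$.

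The only technical point that needs a word of care is the interchange of the linear map with the limit of polytopes: one should note that the convergence $C_{t,\tau} \to C_\tau$ is in the Hausdorff metric (guaranteed by monotonicity and the uniform bound $N$ on the vertex count, as already remarked before the lemma), and that any continuous — in particular, any invertible linear — map is continuous with respect to the Hausdorff metric on compact convex sets. This makes $L(\lim_t C_{t,\tau_2}) = \lim_t L(C_{t,\tau_2})$ legitimate. Everything else is routine: the identity $\Phi(t,\tau_1) = \Phi(t,\tau_2)\Phi(\tau_2,\tau_1)$ is the standard transition-matrix composition rule, and the vertex-preservation of invertible affine maps is elementary convex geometry. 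I expect no genuine obstacle; the main thing is simply to phrase the "affine image of a convex hull" and "linear image of a limit" steps cleanly.
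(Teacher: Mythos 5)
Your proposal is correct and follows essentially the same route as the paper: both arguments use the identity $\Phi'(t,\tau_1) = \Phi'(\tau_2,\tau_1)\,\Phi'(t,\tau_2)$, pass to the limit in $t$, and invoke the invertibility of $\Phi'(\tau_2,\tau_1)$ to conclude that vertices map bijectively to vertices (the paper phrases the vertex-preservation step as a contrapositive via inverse images, while you state it as a general fact about invertible affine maps, but the content is identical). Your explicit remark about Hausdorff continuity justifying the interchange of the linear map with the limit is a welcome bit of extra care, not a departure.
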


	\begin{proof}
  		Assume that $\tau_2 \geq \tau_1 \geq 0$ are two arbitrary time instants. Define linear operator $\phi'_{\tau_2,\tau_1}:\mathbb{R}^N \rightarrow \mathbb{R}^N$ by:
  		\begin{equation}
    			\phi'_{\tau_2,\tau_1}(v) \triangleq \Phi'(\tau_2,\tau_1)v, \, \forall v \in \mathbb{R}^N.
    		\label{operator}
  		\end{equation}
		Note now that from (\ref{nested}), for $t \geq \tau_2 \geq \tau_1 \geq 0$ we have:
  		\begin{equation}
    			\Phi'(t,\tau_1) =  \Phi'(\tau_2,\tau_1) \Phi'(t,\tau_2).
  		\label{nested-2}
  		\end{equation}
		Therefore, in view of (\ref{nested-2}) by taking $t$ to infinity, the vertices of $C_{\tau_2}$ are uniquely mapped to vectors in $\mathbb{R}^N$ which because of the linearity of map (\ref{operator}), will play the role of vertices for the generation of convex hull $C_{\tau_1}$. Also, it is not difficult to show that the images of vertices of $C_{\tau_2}$ must remain vertices of $C_{\tau_1}$, for if one of the images of a vertex of $C_{\tau_2}$, say $v$,  turned out to be a convex combination of other vertices of $C_{\tau_1}$, this would also be true for the inverse images of these vertices (also vertices of $C_{\tau_2}$ due to invertibility of matrix $\Phi'(\tau_2,\tau_1)$), and $v$ would then fail to be a vertex of $C_{\tau_2}$. In conclusion, $C_{\tau_1}$ and $C_{\tau_2}$ will have the same number of vertices, and (\ref{operator}) constitutes a one to one map between corresponding pairs of vertices.
	\end{proof}
	
	 Let integer $c$ be the constant value of $c_{\tau}$, $\tau \geq 0$. We will show later in this section that $c$ is equal to $\Rank(A)$. To prove this, we first state the following two lemmas.

	\begin{lemma}
  	\label{space-generated}
  		$\Rank(A)$ is equal to the dimension of the vector space generated by the vectors corresponding to the vertices of $C_{\tau}$, for every $\tau \geq 0$.
	\end{lemma}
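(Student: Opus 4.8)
The plan is to relate the limiting polytope $C_\tau$ to the null space $\Null_\tau(A)$ directly, exploiting the fact that $\Phi'(t,\tau)$ is column-stochastic and that $C_{t,\tau}$ is the convex hull of its columns. First I would observe that a vector $v\in\mathbb{R}^N$ lies in $\Null_\tau(A)$ exactly when $\Phi(t,\tau)v\to\mathbf{0}_N$; writing this coordinatewise, the $i$th entry of $\Phi(t,\tau)v$ is the inner product of $v$ with the $i$th column of $\Phi'(t,\tau)$, i.e. with the $i$th vertex-generator of $C_{t,\tau}$. So $v\in\Null_\tau(A)$ iff the linear functional $\langle v,\cdot\rangle$ tends to $0$ uniformly over all points of $C_{t,\tau}$, which (since these polytopes are nested and converge to $C_\tau$) happens iff $\langle v, p\rangle = 0$ for every vertex $p$ of $C_\tau$. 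Hence $\Null_\tau(A)$ is precisely the orthogonal complement of $W_\tau := \mathrm{span}\{\text{vertices of } C_\tau\}$, so $\dim W_\tau = N - \Nullity(A) = \Rank(A)$, which is the claim.

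Carrying this out, the key steps in order are: (i) fix $\tau$ and express $(\Phi(t,\tau)v)_i = \langle v, q_i(t)\rangle$ where $q_i(t)$ is the $i$th column of $\Phi'(t,\tau)$, so that $v\in\Null_\tau(A)$ iff $\langle v,q_i(t)\rangle\to 0$ for each $i$; (ii) show that the convergence $C_{t,\tau}\to C_\tau$ (vertices converging, by Lemma \ref{convex-hull} and Lemma \ref{fixed vertices}) lets us replace ``$\langle v,q_i(t)\rangle\to 0$ for all $i$'' by ``$\langle v,p\rangle = 0$ for all vertices $p$ of $C_\tau$'' — here one direction is immediate from continuity of the inner product, and the other uses that the $q_i(t)$ are convex combinations of, and converge to, the vertices of $C_\tau$ together with the fact that $\sum_i\Phi_{i,j}(t,\tau)=1$ keeps the vertex set from collapsing; (iii) conclude $\Null_\tau(A) = W_\tau^{\perp}$ and take dimensions via the orthogonal-complement relation $\dim W_\tau + \dim W_\tau^\perp = N$; (iv) invoke \eqref{rank-nullity} to get $\dim W_\tau = \Rank(A)$, independent of $\tau$ by the earlier lemmas.

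The main obstacle I anticipate is step (ii), specifically the forward implication: knowing only that $\langle v, q_i(t)\rangle \to 0$ for the finitely many column-sequences $q_i(t)$, I must deduce that $v$ annihilates every vertex of the limit polytope $C_\tau$. The subtlety is that a vertex of $C_\tau$ need not itself be the limit of any single $q_i(t)$ — a priori the limiting vertices could arise as limits of convex combinations that shift their weights as $t\to\infty$. The clean way around this is to use the nestedness from Lemma \ref{convex-hull}: every vertex of $C_\tau$ lies in $C_{t,\tau}$ for all $t$ (since $C_\tau = \bigcap_{t\ge\tau} C_{t,\tau}$), hence is a convex combination of $q_1(t),\dots,q_N(t)$ for each fixed $t$; if $\langle v, q_i(t)\rangle\to 0$ for all $i$ then $\langle v,\cdot\rangle\to 0$ uniformly on $C_{t,\tau}$ as $t\to\infty$, and since a vertex $p$ of $C_\tau$ satisfies $p\in C_{t,\tau}$ for every $t$, we get $\langle v,p\rangle = 0$. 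Conversely, if $v \perp W_\tau$, then since $q_i(t)\to$ (some point of $C_\tau$, in fact a convex combination of its vertices) we get $\langle v, q_i(t)\rangle\to 0$, so $v\in\Null_\tau(A)$. With both inclusions in hand the dimension count is routine.
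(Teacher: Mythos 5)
Your proof is correct and takes essentially the same route as the paper's: the paper simply asserts that $u \in \Null_0(A)$ if and only if $u$ is orthogonal to every vertex of $C_0$, and reads off the dimension from the orthogonal-complement relation, while your steps (i)--(iv) supply exactly the details behind that assertion (with the forward implication handled correctly via $C_\tau \subset C_{t,\tau}$). The only minor imprecision is your remark that each column $q_i(t)$ ``converges to'' a point of $C_\tau$ --- individual columns need not converge, only their distance to $C_\tau$ vanishes --- but that weaker fact is all your argument actually uses.
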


	\begin{proof}
  		It suffices to prove Lemma \ref{space-generated} for $\tau = 0$. Let $v_1,\ldots,v_{c} \in \mathbb{R}^N$ be the $c$ vertices of $C_0$. It is easy to see that for any $u \in \mathbb{R}^N$:
  		\begin{equation}
    			u \in \mathcal{N}_0(A)\, \Longleftrightarrow\, v'_i u = 0,\, \forall i, 1 \leq i \leq c.
  		\end{equation}
  		It implies that the dimension of the vector space generated by $v_1,\ldots,v_c$ is $N - \Nullity(A)$, which proves the lemma.
	\end{proof}

	\begin{lemma}
  		For every $\tau \geq 0$, the vectors corresponding to the vertices of $C_{\tau}$ are linearly independent.
  	\label{independence}
	\end{lemma}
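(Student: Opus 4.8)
The plan is to combine the two facts already in hand: by Lemma \ref{space-generated} the vectors corresponding to the $c$ vertices of $C_{\tau}$ span a vector space of dimension $\Rank(A)$, and by definition $\Rank(A) = N - \Nullity(A)$. If I can show that $c = \Rank(A)$, then $c$ vectors spanning a $c$-dimensional space must be linearly independent, and the lemma follows immediately. So the real content is the identity $c = \Rank(A)$; equivalently, that the $c$ vertices of $C_{\tau}$ are not merely a spanning set of their span but a \emph{minimal} one.

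First I would fix $\tau = 0$ without loss of generality (by Lemma \ref{fixed vertices} the number of vertices $c$ does not depend on $\tau$, and by Lemma \ref{space-generated} neither does the dimension of the span). Let $v_1,\ldots,v_c$ be the vertices of $C_0$. Suppose, for contradiction, that they are linearly dependent; then some $v_k$ lies in the affine-or-linear span of the others, and in particular $\dim\,\mathrm{span}\{v_1,\ldots,v_c\} \le c-1$. I want to derive a contradiction with the fact that $C_0$ genuinely has $c$ vertices. The key geometric observation is that $C_0 = \lim_{t\to\infty} C_{t,0}$, where each $C_{t,0}$ is the convex hull of the columns of $\Phi'(t,0)$ — all of which are stochastic vectors lying in the hyperplane $\{x : \mathbf{1}_N' x = 1\}$. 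Hence $C_0$ lies in that affine hyperplane, so the \emph{affine} hull of $\{v_1,\ldots,v_c\}$ has dimension one less than its linear span. A vertex of a polytope cannot lie in the convex hull of the remaining vertices; if the $v_i$ are linearly dependent \emph{and} all sit on the hyperplane $\mathbf{1}_N' x = 1$, I would argue that this forces one $v_k$ into the affine hull of the others — but being a vertex it cannot be a convex combination of them, which is the required contradiction only after ruling out the possibility that $v_k$ is an affine combination with some negative coefficients. That last gap is closed by the nesting structure of Lemma \ref{convex-hull}: since $C_0 \subset C_{t,0}$ for all $t$, and the extreme points persist in the limit, any affine dependence among the $v_i$ that is not a convex dependence would have to be an affine dependence among limiting vertices that are themselves limits of genuine vertices, contradicting their extremality already at finite $t$ (or, more cleanly, contradicting Lemma \ref{fixed vertices}, which via the invertible map $\phi'_{\tau_2,\tau_1}$ says the vertex count is a genuine invariant).

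Alternatively — and this is the cleaner route I would actually write up — I would bypass the affine-hull subtlety entirely by a direct counting argument. By Lemma \ref{space-generated}, $\dim\,\mathrm{span}\{v_1,\ldots,v_c\} = \Rank(A) = N-\Nullity(A)$. On the other hand, $\Nullity(A) = \dim \Null_0(A)$, and from the equivalence $u \in \Null_0(A) \Leftrightarrow v_i' u = 0$ for all $i$ (displayed in the proof of Lemma \ref{space-generated}), $\Null_0(A)$ is exactly the orthogonal complement of $\mathrm{span}\{v_1,\ldots,v_c\}$, so these dimensions are automatically consistent and give no new information by themselves. The extra input I need is that a polytope with $c$ vertices in a space of dimension exactly $m$ must have $m \le c-1$ when the polytope is full-dimensional in an affine flat — i.e., $c \ge m+1$. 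But I need the reverse-type bound $c \le m$, which is special: it holds precisely because $C_0$ is contained in the hyperplane $\mathbf{1}_N'x=1$ \emph{and} because the vertices, being limits of stochastic vectors that remain affinely independent through the nesting, span a linear space of dimension exactly equal to their number. Concretely: $c$ affinely-positioned points on a hyperplane span a linear space of dimension (affine dimension) $+\,1 \le (c-1)+1 = c$, with equality iff they are affinely independent, which by Lemma \ref{fixed vertices}'s invertible correspondence holds iff it holds in the limit iff it holds for genuine vertices at finite time — where vertices of a polytope are always affinely independent as a set only when the polytope is a simplex, so this is where I must be careful.

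I expect the main obstacle to be exactly this last point: vertices of a general polytope need \emph{not} be affinely independent, so the identity $c = \Rank(A)$ is genuinely a theorem about the limiting polytope $C_0$, not a triviality. The resolution must exploit something special about $C_0$ — namely that it arises as a decreasing limit of convex hulls of \emph{stochastic} matrices' columns, so the limiting extreme points behave like the "ergodicity classes'' of the chain and are forced to be affinely (hence, on the hyperplane, linearly) independent. I would therefore structure the final proof as: (1) reduce to $\tau=0$; (2) show $C_0$ lies in $\{\mathbf{1}_N'x=1\}$; (3) show the vertices $v_1,\ldots,v_c$ are affinely independent, using the nested structure (Lemma \ref{convex-hull}) together with the invertible-map argument of Lemma \ref{fixed vertices} to push the question to finite time and there invoke extremality; (4) conclude that they are linearly independent since affinely-independent points on a hyperplane through the origin's complement are linearly independent. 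Step (3) is the crux and is where all the real work lies.
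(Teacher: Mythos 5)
You have correctly located the crux --- that vertices of a general polytope need not be affinely (hence linearly) independent, so the lemma is a genuine statement about the special structure of the limiting polytope $C_0$ --- but your proposal does not actually close that gap, and the mechanism you sketch for step (3) cannot work as stated. Pushing the question to finite time via the invertible map $\phi'_{\tau_2,\tau_1}$ of Lemma \ref{fixed vertices} buys you nothing: an invertible linear map transports any linear or affine dependence among the $v_i$ to an identical dependence among the corresponding vertices $u_i$ of $C_T$, and at finite time $C_{T,0}$ is just some polytope whose vertices can perfectly well satisfy an affine relation with mixed-sign coefficients; extremality only rules out \emph{convex} dependences. Likewise, your first route (establish $c=\Rank(A)$ and then conclude independence by a dimension count) is circular, since in the paper $c=\Rank(A)$ is Theorem \ref{rank=c}, which is deduced \emph{from} Lemmas \ref{space-generated} and \ref{independence}; you partly acknowledge this when you note the dimension count "gives no new information by itself."

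The missing idea in the paper's proof is a quantitative mass-concentration argument, not an appeal to extremality. One fixes $\epsilon$ as a fraction of the minimum distance from each $v_i$ to the convex hull of the other vertices, chooses $T$ so that $C_{t,0}$ lies within $\epsilon_1=\epsilon/(2N)$ of $C_0$ for $t\ge T$, and shows that for each $i$ there is a nonempty set $S^i\subset\V$ of coordinates --- the indices $j$ for which $\Phi'_j(T,0)$ lies in a thin slab near $v_i$ --- such that the $S^i$ are pairwise disjoint and the vertex $u_i$ of $C_T$ corresponding to $v_i$ satisfies $\sum_{j\in S^i}(u_i)_j\ge 1-2/(2N+1)$ while $\sum_{j\notin S^i}(u_i)_j\le 2/(2N+1)$. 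In other words, the limiting vertices, read in the coordinates of time $T$, have nearly disjoint supports. Restricting a putative dependence $\sum_i\alpha_{\sigma(i)}u_i=0$ to the coordinates in $S^k$, where $k$ maximizes $|\alpha_{\sigma(k)}|$, then forces the maximal coefficient to vanish. Nothing in your proposal supplies this (or any substitute) quantitative input; the affine-hyperplane observation $\mathbf{1}_N'x=1$ is true but only reduces linear independence to affine independence, which is exactly the statement you have not proved.
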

	\begin{proof}
  		It is  sufficient to prove the lemma for $\tau = 0$, i.e., to show that the vertices of $C_0$, namely $v_1,\ldots,v_c$, are linearly independent. Assume that $\alpha_1,\ldots, \alpha_c \in \mathbb{R}$ are such that:
  		\begin{equation}
    		\label{dependent}
    			\sum_{i=1}^c \alpha_i v_i = 0.
  		\end{equation}
		We note that vector $v_i$, $1 \leq i \leq c$, must lie outside of the convex hull of vectors $v_j$'s, $j \neq i$, for otherwise it would not qualify as a vertex. For every $i$, $1 \leq i \leq c$, let $w_i$ be the projection of $v_i$ on the convex hull of $v_j$'s, $j \neq i$. Define the following positive numbers:
  		\begin{equation}
    			\epsilon \triangleq \frac{1}{4}\min \{ \|v_i-w_i\| \, | \, 1 \leq i \leq c \},
  		\end{equation}
  		and:
  		\begin{equation}
    			\epsilon_1 \triangleq \epsilon /(2N).
			\label{zzzzz}
  		\end{equation}
  		Because $C_0$ is the limit of $C_{t,0}$ as $t$ goes to infinity, there must exist a sufficiently large time $T \geq 0$, such that for $t \geq T$, every point in $C_{t,0}$ lies within an $\epsilon_1$-distance of $C_{0}$. As depicted in Fig. \ref{Fig2}, for every $i$, $1 \leq i \leq c$, let $l_i$ be the hyperplane in $\mathbb{R}^N$ distant $\epsilon$ from $v_i$, crossing segment $v_iw_i$ and orthogonal to it. Let also $m_i$ be the hyperplane which is parallel to $l_i$, on the other side of $v_i$, distant $\epsilon_1$ from $v_i$.
  
		\begin{figure}[h]
  			\begin{center}
  				\captionsetup{justification=centering}
    				\vspace{-.7in}
    				\includegraphics[scale=.4]{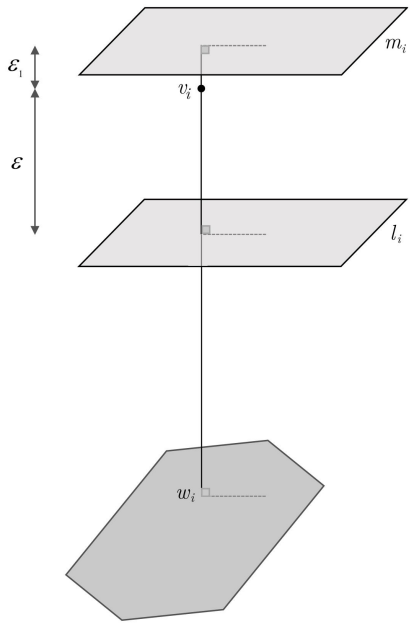}
    				\vspace{-.4in}
    				\caption{Planes $l_i$ and $m_i$ are orthogonal to segment $v_iw_i$.}
		\label{Fig2}
			\end{center}
		\end{figure}
  
  		Define for every $i$, $1 \leq i \leq c$:
  		\begin{equation}
    			S^i = \{ j \in \V \, | \, \Phi'_j(T,0) \text{ lies in the strip margined by } l_i,m_i \}.
  		\end{equation}  
  		Note that by the assumption, every point in $C_{T,0}$, including $\Phi'_j(T,0)$, lies within an $\epsilon_1$-distance of $C_{0}$. Therefore, $\Phi'_j(T,0)$ must lie on the same side of $m_i$ as $v_i$ does. In other words, $\Phi'_j(T,0)$ either lies in the strip margined by $l_i$ and $m_i$ or lies on the side of $l_i$ opposite to $v_i$ (below $l_i$ in Fig. \ref{Fig2}). This implies that $S_i$, $1 \leq i \leq c$, is non-empty. Indeed otherwise, $\Phi'_j(T,0)$ would lie below $l_i$ in Fig. \ref{Fig2} for every $j$ resulting in $C_{T,0}$ also lying below $l_i$, which would be a contradiction since $C_{T,0}$ must contain $C_0$ and $v_i$ in particular. One can also show that $S^i$'s, $1 \leq i \leq c$, are pairwise disjoint sets. More specifically, one can show that any point of $C_{T,0}$ that lies in the intersection of any two of sets $S^i$'s cannot be within $\epsilon$-distance of $C_0$, and since $\epsilon > \epsilon_1$, this would violate the defining property of $T$. $C_0$ being the limit of shrinking convex hulls $C_{t,0}$'s, it follows that for $i=1,\ldots,c$, there exists sequences $\{i_t\}$ of individuals such that $\Phi'_{i_t}(t,0)$ converges to $v_i$. Therefore, after some finite time, we have the following inequality:
  		\begin{equation}
    			\|\Phi'_{i_t}(t,0)-v_i\|<\epsilon_1.
    		\label{ineqq1}
  		\end{equation}
  		Without loss of generality, we can assume that the inequality (\ref{ineqq1}) holds for every $t \geq T$ (otherwise, we would proceed by replacing $T$ with $T'$, $T' > T$, such that inequality (\ref{ineqq1}) holds for every $t \geq T'$). We have for every $t \geq T$:
  		\begin{equation}
    			\begin{array}{ll}
      			\Phi'_{i_t}(t,0)	& \hspace{-.1in} = \Phi'(T,0) \Phi'_{i_t}(t,T) \vspace{.05in}\\
								& \hspace{-.1in} = \sum_{j \in \V}\Phi_{i_t ,j}(t,T) \Phi'_j(T,0) \vspace{.05in}\\
                             				& \hspace{-.1in} = \sum_{j \not\in S^i}\Phi_{i_t ,j}(t,T) \Phi'_j(T,0) + \sum_{j \in S^i}\Phi_{i_t ,j}(t,T) \Phi'_j(T,0).
    			\end{array}
    		\label{eqq1}
  		\end{equation}
  		We now show that for every $i$, $1 \leq i \leq c$, the following two inequalities must hold:
  		\begin{equation}
    			\sum_{j\not\in S^i} \Phi_{i_t, j}(t,T) < 2/(2N+1),
    		\label{tendd}
  		\end{equation}
  		\begin{equation}
    			\sum_{j \in S^i} \Phi_{i_t, j}(t,T) > 1- 2/(2N+1).
   	 	\label{tend}
  		\end{equation}
  		To prove (\ref{tendd}) and (\ref{tend}), we use (\ref{eqq1}) to find a lower bound for the distance from $\Phi'_{i_t}(t,0)$, $t \geq T$, to hyperplane $m_i$ as drawn in Fig. \ref{Fig2}. Remember that if $j \in S^i$, then, $\Phi'_j(T,0)$ lies in the strip margined by $m_i$ and $l_i$, while if $j \not\in S^i$, then, $\Phi'_j(T,0)$ lies below $l_i$ in Fig. \ref{Fig2}. For a fixed $i$, $1 \leq i \leq c$, let $\eta \triangleq \sum_{j\not\in S^i} \Phi_{i_t, j}(t,T)$. $\Phi(t,T)$ being row-stochastic, it immediately follows that $\sum_{j \in S^i} \Phi_{i_t, j}(t,T) = 1 - \eta$. Using (\ref{eqq1}), we now conclude that:
		\begin{equation}
    			\eta (\epsilon_1 + \epsilon) + (1-\eta).0
  		\end{equation}
is a lower bound for the distance from $\Phi'_{i_t}(t,0)$, $t \geq T$, to hyperplane $m_i$. This distance, on the other hand, is upper bounded by $2\epsilon_1$ since inequality (\ref{ineqq1}) is satisfied for every $t \geq T$. Thus, we must have:
		\begin{equation}
    			\eta (\epsilon_1 + \epsilon) + (1-\eta).0 < 2\epsilon_1,
  		\end{equation}
which immediately results in $\eta < 2/(2N+1)$ (remember that $\epsilon = 2N\epsilon_1$), and inequalities (\ref{tendd}) and (\ref{tend}) follow. Now remember by construction that $\lim_{t \rightarrow \infty} \Phi'_{i_t}(t,0) = v_i$ where $v_i$ is a given vertex of $C_0$. Furthermore, noting that:
  		\begin{equation}
    			\Phi'_{i_t}(t,0) = \Phi'(T,0) \Phi'_{i_t}(t,T),
  		\end{equation}
and taking limits on both sides as t goes to infinity, it follows that $\lim_{t \rightarrow \infty} \Phi'_{i_t}(t,T)$ is the image of a vertex of $C_0$ and therefore (following the proof of Lemma \ref{fixed vertices}) is itself a vertex of $C_T$, say $u_i$. Considering (\ref{tend}) again, and taking limits as $t \rightarrow \infty$, one can conclude:
  		\begin{equation}
    			\sum_{j \in S^i} (u_i)_j \geq 1- 2/(2N+1),
    		\label{w}
  		\end{equation}
and consequently:
  		\begin{equation}
    			\sum_{j \not\in S^i} (u_i)_j \leq 2/(2N+1).
    		\label{w1}
  		\end{equation}
  		Inequality (\ref{w}) can be established for $i = 1,\ldots,c$, where $u_i$, $i = 1,\ldots,c$ are the vertices of $C_T$.
  		Recalling linear operator $\phi_{\tau_2,\tau_1}$ from (\ref{operator}) one can write for some permutation $\sigma$ over set $\{1,\ldots,c\}$:
  		\begin{equation}
  		\label{u-v}
    			u_i = \Phi'(T,0) v_{\sigma(i)}, \, \forall i, \, 1\leq i \leq c,
  		\end{equation}
  		Combining relations (\ref{dependent}) and (\ref{u-v}) yields:
  		\begin{equation}
    			\sum_{i=1}^c \alpha_{\sigma(i)} u_i = 0,
    		\label{u-dependent}
  		\end{equation}
  		If we now assume that $k$, $1\leq k \leq c$, is such that:
  		\begin{equation}
    			| \alpha_{\sigma(k)} | = \max_{1 \leq i \leq c} \{ |\alpha_i| \} \triangleq \alpha,
  		\end{equation}
  		Now noting that (\ref{w}) and (\ref{w1}) hold only for the vertex $u_i$ which is the image of $v_i$, and that the $S^i$'s  are disjoint sets of agents, one can write the following:
  		\begin{equation}
    			\begin{array}{ll}
    				0 	& \hspace{-.1in} = |\sum_{j\in S^k}\sum_{i=1}^c \alpha_{\sigma(i)} (u_i)_j | \vspace{.05in}\\   
       				& \hspace{-.1in} = | \sum_{j\in S^k} \alpha_{\sigma(k)} (u_k)_j  + \sum_{j\in S^k}\sum_{i \neq k} \alpha_{\sigma(i)} (u_i)_j | \vspace{.05in}\\
       				& \hspace{-.1in} \geq |\alpha_{\sigma(k)}| . | \sum_{j\in S^k} (u_k)_j | - \sum_{i \neq k} \left(| \alpha_{\sigma(i)} | . \sum_{j\in S^k} (u_i)_j \right) \vspace{.05in}\\
       				& \hspace{-.1in} \geq |\alpha_{\sigma(k)}| . | \sum_{j\in S^k} (u_k)_j | - \sum_{i \neq k} \left(| \alpha_{\sigma(i)} | . \sum_{j \not\in S^i} (u_i)_j \right) \vspace{.05in}\\
       				& \hspace{-.1in}\geq \alpha (1-2/(2N+1)) - \alpha (c-1).2/(2N+1) = \alpha(2(N-c)+1)/(2N+1) \vspace{.05in}\\
					& \hspace{-.1in} > 0,
    			\end{array}
    		\label{w2}
 		\end{equation}
which is a contradiction. Thus, we must have $\alpha = 0$, which means $\alpha_i = 0$, $\forall i$, $1 \leq i \leq c$. This proves the lemma.
	\end{proof}

	\begin{theorem}
  	\label{rank=c}
  		$\Rank(A)$ is equal to $c$, i.e, the constant value of $c_{\tau}$, $\tau \geq 0$, where $c_{\tau}$ is the number of vertices of limiting polytope $C_{\tau}$.
	\end{theorem}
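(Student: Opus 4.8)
The plan is to obtain Theorem \ref{rank=c} as an immediate consequence of the two lemmas just established, namely Lemma \ref{space-generated} and Lemma \ref{independence}, together with Lemma \ref{fixed vertices}. First I would fix an arbitrary $\tau \geq 0$ — by Lemma \ref{fixed vertices} the count $c_\tau$ of vertices of the limiting polytope $C_\tau$ does not depend on this choice, so without loss of generality one may take $\tau = 0$ and denote the vertices of $C_0$ by $v_1, \ldots, v_c$.

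The key observation is that these $c$ vectors are linearly independent by Lemma \ref{independence}, so the vector space they generate has dimension exactly $c$. On the other hand, Lemma \ref{space-generated} asserts that the dimension of the vector space generated by the vertices of $C_\tau$ is precisely $\Rank(A)$. Combining the two statements gives $\Rank(A) = \dim \operatorname{span}\{v_1,\ldots,v_c\} = c$, which is the claim. Since $\tau$ was arbitrary and $c_\tau \equiv c$, the conclusion holds for every $\tau \geq 0$.

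There is essentially no obstacle left at this stage: all of the analytic difficulty — in particular the delicate $\epsilon$/$\epsilon_1$ strip argument showing that distinct limiting vertices are ``fed'' by disjoint sets of agents $S^i$ and hence cannot satisfy a nontrivial linear relation — has already been absorbed into the proof of Lemma \ref{independence}. The only point worth stating carefully is that Lemma \ref{space-generated} already accounts for the dimension count via the characterization $u \in \mathcal{N}_0(A) \Longleftrightarrow v_i' u = 0$ for all $i$, so no separate rank-nullity bookkeeping is needed here; the theorem is a one-line synthesis of the preceding results.
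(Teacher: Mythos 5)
Your proposal is correct and matches the paper's proof exactly: the paper likewise derives Theorem \ref{rank=c} as an immediate consequence of Lemmas \ref{space-generated} and \ref{independence}, with the dimension count $\dim\operatorname{span}\{v_1,\ldots,v_c\}=c$ supplied by linear independence. Your additional remarks about Lemma \ref{fixed vertices} and the reduction to $\tau=0$ are consistent with the paper's setup and add nothing that changes the argument.
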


	\begin{proof}
  		Theorem \ref{rank=c} is an immediate result of Lemmas \ref{space-generated} and \ref{independence}.
	\end{proof}

	Combining Theorems \ref{rank=EGC} and \ref{rank=c} result in the following corollary.

	\begin{corollary}
  		The size of the smallest EGC of a network with dynamics (\ref{mc}) is $c$.
  	\label{special agents}
	\end{corollary}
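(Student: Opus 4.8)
The plan is short, because the corollary is a direct composition of the two main results already in hand. First I would invoke Theorem~\ref{rank=EGC}, which identifies the size of the smallest EGC of the network with dynamics (\ref{mc}) as $\Rank(A)$; note that this statement also implicitly guarantees that a smallest EGC exists, since it both exhibits an EGC of size $\Rank(A)$ and shows every EGC has size at least $\Rank(A)$. Then I would invoke Theorem~\ref{rank=c}, which equates $\Rank(A)$ with $c$, the common number of vertices of the limiting polytopes $C_{\tau}$, $\tau \geq 0$. Chaining the two equalities, the size of the smallest EGC equals $\Rank(A)$, which equals $c$, and this is exactly the assertion.

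For a reader who prefers to see the argument unwound rather than quoted, I would retrace the two reductions in parallel. On the coalition side, Lemma~\ref{zero is enough} reduces the EGC property of a subset $\s \subset \V$ to the ability to steer all opinions to $\mathbf{0}_N$ by initializing only the members of $\s$, and the proof of Theorem~\ref{rank=EGC} turns this into the combinatorial statement that the minimal such $\s$ has cardinality $\Nullity(A) = N - \dim \Null_{t_0}(A)$ (delete from $\V$ the indices of a maximal independent set of rows of a basis matrix of $\Null_{t_0}(A)$). On the geometric side, the proof of Lemma~\ref{space-generated} shows that $u \in \Null_{t_0}(A)$ if and only if $v_i' u = 0$ for each vertex $v_1,\ldots,v_c$ of $C_{t_0}$, hence $\dim \Null_{t_0}(A) = N - d$, where $d$ is the dimension of the vector space generated by $v_1,\ldots,v_c$; and Lemma~\ref{independence} gives $d = c$. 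Combining, $\Nullity(A) = N - c$, so the smallest EGC has size $N - \Nullity(A) = c$.

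The main obstacle is not located at the level of this corollary at all. Everything substantive has been settled upstream: the monotonicity of the nested polytopes $C_{t,\tau}$ (Lemma~\ref{convex-hull}), the $\tau$-independence of the vertex count (Lemma~\ref{fixed vertices}), and above all the linear independence of the limiting vertices (Lemma~\ref{independence}), whose proof is the one delicate geometric estimate, controlling the mass that the rows of $\Phi'(t,T)$ place inside the strips bounded by the hyperplanes $l_i$ and $m_i$. Given those, the only residual point worth a sentence is well-definedness of ``the smallest EGC,'' i.e., that a minimum-size EGC exists among all EGC's; this is exactly what Theorem~\ref{rank=EGC} furnishes. With that observation the corollary is immediate.
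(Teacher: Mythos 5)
Your proposal is correct and matches the paper exactly: the corollary is obtained there by simply combining Theorem~\ref{rank=EGC} (smallest EGC has size $\Rank(A)$) with Theorem~\ref{rank=c} ($\Rank(A)=c$). Your additional unwinding of the two reductions is accurate but not needed beyond that composition.
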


	\begin{lemma}
  		$c$ is less than or equal to the number of ergodicity classes.
  	\label{ergod}
	\end{lemma}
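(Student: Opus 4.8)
The plan is to use Theorem \ref{rank=c}, which identifies $c$ with the number of vertices of the limiting polytope $C_\tau$, together with the observation that $C_\tau$ is already generated by a single agent taken from each ergodicity class. Fix $\tau=0$, which is legitimate since by Lemma \ref{fixed vertices} the value of $c$ does not depend on $\tau$. Write $m$ for the number of ergodicity classes, choose a transversal $R=\{j_1,\dots,j_m\}\subset\V$ containing exactly one agent from each class, and let $C^R_{t,0}$ denote the convex hull of the points $\Phi'_{j}(t,0)$, $j\in R$.

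First I would show that $C^R_{t,0}$ and $C_{t,0}$ coalesce in the Hausdorff metric as $t\to\infty$. Since $R\subset\V$ we have $C^R_{t,0}\subseteq C_{t,0}$ for every $t$. For the reverse estimate, let $\rho(k)\in R$ be the representative of the ergodicity class of agent $k$; by Definition \ref{ergodicity classes} we have $\|\Phi'_k(t,0)-\Phi'_{\rho(k)}(t,0)\|\to 0$ as $t\to\infty$. Hence every generator $\Phi'_k(t,0)$ of $C_{t,0}$ lies within distance $\delta(t):=\max_{k\in\V}\|\Phi'_k(t,0)-\Phi'_{\rho(k)}(t,0)\|$ of the convex set $C^R_{t,0}$, and by convexity the whole polytope $C_{t,0}$ lies within distance $\delta(t)$ of $C^R_{t,0}$; since $\V$ is finite, $\delta(t)\to 0$. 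As the nested polytopes $C_{t,0}$ converge to $C_0$ (Lemma \ref{convex-hull} and the uniform bound $N$ on the number of their vertices), the triangle inequality for the Hausdorff distance then gives $C^R_{t,0}\to C_0$ as well.

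Then I would count vertices. The points $\Phi'_{j}(t,0)$, $j\in R$, all lie in the compact standard simplex of stochastic vectors, so along a suitable sequence $t_n\to\infty$ each of them converges, say $\Phi'_{j_i}(t_n,0)\to q_i$; by continuity of the convex-hull map on $m$-tuples of points, $C^R_{t_n,0}\to\operatorname{conv}\{q_1,\dots,q_m\}$. Since the full limit $C^R_{t,0}\to C_0$ already exists, we get $C_0=\operatorname{conv}\{q_1,\dots,q_m\}$, a polytope with at most $m$ vertices. Therefore $c=c_0\le m$, which is the claim.

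The one genuinely delicate point is expected to be the first step --- the passage from ``every generator of $C_{t,0}$ is close to $C^R_{t,0}$'' to ``all of $C_{t,0}$ is close to $C^R_{t,0}$'' --- which relies on the convexity of $C^R_{t,0}$ together with the finiteness of $\V$ (so that the per-agent convergence $\|\Phi'_k(t,0)-\Phi'_{\rho(k)}(t,0)\|\to 0$ is automatically uniform in $k$). The remaining ingredients, namely continuity of the convex hull under Hausdorff convergence, compactness of the simplex, and convergence of the nested sequence $\{C_{t,0}\}$, are standard and already available from the earlier part of this section.
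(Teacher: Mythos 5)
Your proof is correct, but it takes a genuinely different route from the paper's. The paper argues in the opposite direction: reusing the sequences $\{i_t\}$ constructed in the proof of Lemma \ref{independence} with $\Phi'_{i_t}(t,0)\to v_i$, it sets $\epsilon_2=\tfrac13\min_{i\neq j}\|v_i-v_j\|$ and shows via the triangle inequality that for all large $t$ the agents $i_t$ and $j_t$ attached to distinct vertices satisfy $\|\Phi'_{i_t}(t,0)-\Phi'_{j_t}(t,0)\|>\epsilon_2$, hence cannot lie in the same ergodicity class; this exhibits $c$ pairwise distinct classes, i.e., a lower bound on the number of classes. You instead bound the number of vertices of $C_0$ from above: a transversal $R$ of the $m$ ergodicity classes generates a sub-polytope $C^R_{t,0}$ whose Hausdorff distance to $C_{t,0}$ is controlled by $\delta(t)=\max_k\|\Phi'_k(t,0)-\Phi'_{\rho(k)}(t,0)\|\to 0$ (uniformity being free since $\V$ is finite), so $C_0$ is the limit of convex hulls of $m$ points and hence the convex hull of at most $m$ points, forcing $c\le m$. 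Your key convexity step --- that if every generator of $C_{t,0}$ is within $\delta$ of the convex set $C^R_{t,0}$ then all of $C_{t,0}$ is --- is sound (match each generator with a nearby point of $C^R_{t,0}$ and take the same convex combination), and the subsequence/compactness extraction of the limit points $q_1,\dots,q_m$ is legitimate because the columns live in the compact simplex and the full limit $C_0$ already exists, so the subsequential identification determines it. The paper's argument is more elementary and self-contained ($\epsilon$-estimates only, no Hausdorff-metric continuity of the convex-hull map), and it additionally tells you \emph{which} agents track \emph{which} vertices; yours is shorter and more conceptual, showing directly that the limiting polytope is spanned by one column per ergodicity class. Both rely on the same prior facts (Lemma \ref{convex-hull} and the existence of the limiting polytope $C_0$), and both are valid.
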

	\begin{proof}
  		Recall limiting polytope $C_0$ with vertices $v_1,\ldots,v_c$ from earlier in the section. Remember, from the proof of Lemma \ref{independence}, that for $i=1,\ldots,c$, there exists sequences $\{i_t\}$ of individuals such that $\Phi'_{i_t}(t,0)$ converges to $v_i$. Let:
		\begin{equation}
			\epsilon_2 = \frac{1}{3}\min \{ \| v_i - v_j \| \, | i,j \in \V,\, i \neq j \}.
		\label{305}
		\end{equation}
		By definition of ergodicity classes, there exists $T \geq 0$ such that for every $t \geq T$, for a fixed $\tau$, and for every $i,j$ in the same ergodicity class, we have:
		\begin{equation}
			\| \Phi'_i (t,\tau) - \Phi'_j(t,\tau) \| < \epsilon_2.
		\label{306}
		\end{equation}
		On the other hand, there exists $T' > 0$ such that for every $t \geq T'$, and $i = 1,\ldots,c$, we have:
		\begin{equation}
			\| \Phi'_{i_t}(t,0)-v_i \|<\epsilon_2.
		\label{307}
		\end{equation}
		Therefore, for every $t \geq T'$, and $i \neq j$, $1 \leq i,j \leq c$, we must have:
		\begin{equation}
			\begin{array}{ll}
				3\epsilon_2 	& \hspace{-.1in} \leq \| v_i - v_j \| \leq \| v_i - \Phi'_{i_t}(t,0) \| + \| \Phi'_{i_t}(t,0) - \Phi'_{j_t}(t,0) \| + \| \Phi'_{j_t}(t,0)-v_j \| \vspace{.05in}\\
								& \hspace{-.1in} < \epsilon_2 + \| \Phi'_{i_t}(t,0) - \Phi'_{j_t}(t,0) \| + \epsilon_2,
			\end{array}
		\label{308}
		\end{equation}
		where the first inequality above is a result of (\ref{305}), the second inequality is the triangle inequality, and the third inequality is a consequence of (\ref{307}). From (\ref{308}), we now have:
		\begin{equation}
			\| \Phi'_{i_t}(t,0) - \Phi'_{j_t}(t,0) \| > \epsilon_2, \, \forall t \geq T'.
		\label{309}
		\end{equation}
		Taking (\ref{306}) into account, from (\ref{309}) we conclude that $i_t$ and $j_t$ cannot be in the same ergodicity class for every $t \geq \max \{T,T'\}$. Thus, there are at least $c$ distinct ergodicity classes, and the lemma is proved.
	\end{proof}
	
	\begin{corollary}
	\label{upper bound ergodic}
  		For an arbitrary chain $\{A(t)\}$, $\Rank(A)$ is less than or equal to the number of ergodicity classes of $\{A(t)\}$.
	\end{corollary}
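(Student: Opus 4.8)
The plan is to obtain this simply by chaining together two facts already established. By Theorem~\ref{rank=c}, $\Rank(A)=c$, where $c$ is the common number $c_\tau$ of vertices of the limiting polytope $C_\tau$. By Lemma~\ref{ergod}, $c$ is at most the number of ergodicity classes of $\{A(t)\}$. Transitivity of $\leq$ then gives $\Rank(A)\leq$ (number of ergodicity classes), which is exactly the claim. So the corollary is a one-line consequence; no new argument is needed.

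If one wanted to see directly why this is true (rather than quoting the lemmas), the idea is geometric. For a fixed $\tau=0$, the columns of $\Phi'(t,0)$ are stochastic vectors whose convex hull $C_{t,0}$ shrinks monotonically to the polytope $C_0$ with vertices $v_1,\dots,v_c$. For each vertex $v_i$ one can extract a sequence $\{i_t\}$ of individuals with $\Phi'_{i_t}(t,0)\to v_i$. Since the $v_i$ are distinct, these sequences are eventually separated by a fixed positive distance; but two individuals in the same ergodicity class have $\|\Phi'_i(t,\tau)-\Phi'_j(t,\tau)\|\to 0$. Hence for large $t$ the indices $i_t$, $i=1,\dots,c$, lie in $c$ distinct ergodicity classes, so there are at least $c$ of them. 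This is precisely the content of Lemma~\ref{ergod}, and combined with $\Rank(A)=c$ it yields the corollary.

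The only step requiring care is not in the corollary itself but upstream, in Lemma~\ref{ergod}: one must be sure that the representative sequences $\{i_t\}$ can be chosen so that all the separation and convergence estimates hold simultaneously for every vertex, which is handled by passing to a sufficiently large common time $T'$. Given that lemma, there is no remaining obstacle, and the proof of the corollary is immediate.

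\begin{proof}
  This follows at once from Theorem~\ref{rank=c}, which states $\Rank(A)=c$, and Lemma~\ref{ergod}, which states that $c$ is less than or equal to the number of ergodicity classes of $\{A(t)\}$.
\end{proof}
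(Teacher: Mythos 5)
Your proof is correct and matches the paper exactly: the corollary is stated there as an immediate consequence of Theorem~\ref{rank=c} ($\Rank(A)=c$) and Lemma~\ref{ergod} ($c$ is at most the number of ergodicity classes), with no further argument given. Your additional geometric sketch is just a restatement of the content of Lemma~\ref{ergod}, so nothing new is needed.
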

	
	\begin{corollary}
		For an opinion network with dynamics (\ref{mc}), the size of the smallest EGC is upper bounded by the number of ergodicity classes of $\{A(t)\}$.
	\end{corollary}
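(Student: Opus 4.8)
The plan is simply to chain together two structural facts already in hand. By Theorem \ref{rank=EGC}, the size of the smallest EGC of a network with dynamics (\ref{mc}) equals $\Rank(A)$ (equivalently, by Theorem \ref{rank=c} and Corollary \ref{special agents}, it equals $c$, the common number of vertices of the limiting polytopes $C_\tau$). By Corollary \ref{upper bound ergodic}, $\Rank(A)$ is at most the number of ergodicity classes of $\{A(t)\}$. Composing the equality with the inequality yields the claim.

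Concretely, I would let $h$ be the size of the smallest EGC and let $k$ be the number of ergodicity classes of $\{A(t)\}$. Then $h = \Rank(A)$ by Theorem \ref{rank=EGC}, and $\Rank(A) \le k$ by Corollary \ref{upper bound ergodic}; hence $h \le k$, as desired. This is the whole argument — the corollary is a bookkeeping consequence of what precedes it, so there is no genuine obstacle.

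If instead one wanted a proof not routed through Corollary \ref{upper bound ergodic}, the only substantive step is the one already carried out in Lemma \ref{ergod}: take the sequences $\{i_t\}$ with $\Phi'_{i_t}(t,0) \to v_i$ supplied by the proof of Lemma \ref{independence}, set the separation threshold $\epsilon_2 = \frac{1}{3}\min_{i \ne j}\|v_i - v_j\|$, and use a triangle-inequality argument to show that for all large $t$ the indices $i_t$ and $j_t$ attached to distinct vertices cannot lie in a common ergodicity class; this forces at least $c$ distinct ergodicity classes, and combined with $\Rank(A) = c$ gives $\Rank(A) \le k$ and hence $h \le k$. That separation estimate is the only place any analysis is needed; the rest is definition-chasing.
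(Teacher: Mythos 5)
Your proposal is correct and matches the paper's intent exactly: the corollary is stated without proof precisely because it is the composition of Theorem \ref{rank=EGC} (smallest EGC size equals $\Rank(A)$) with Corollary \ref{upper bound ergodic} ($\Rank(A)$ is at most the number of ergodicity classes), which is what you do. Your sketched alternative via Lemma \ref{ergod} is also just the paper's own derivation of that corollary, so nothing further is needed.
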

	
\begin{remark}
\label{remark-3}
  In case $\{A(t)\}$, the underlying chain of a network with dynamics (\ref{mc}), is class-ergodic, the occurrence of multiple consensus in the network is guaranteed, and the number of ergodic classes becomes equal to the number of consensus clusters. Yet this number may be larger than the size of the smallest EGC of the network. In other words, there may exist an EGC in which some of the consensus clusters have no representative. As a simple illustrative example, consider system (\ref{mc}) of three individuals with a fixed underlying chain:
  \begin{equation}
    A(t) = \begin{bmatrix}
      0 & 0 & 0\\
      1/3 & -1 & 2/3\\
      0 & 0 & 0
    \end{bmatrix} , \, \forall t \geq 0.
  \end{equation}
  We then have:
  \begin{equation}
    \lim_{t \rightarrow \infty} x(t) = \begin{bmatrix} x_1(t_0) \\ (x_1(t_0) + 2x_3(t_0))/3 \\ x_3(t_0) \end{bmatrix}.
  \end{equation}
  Notice also that for the corresponding state transition matrix we have:
\begin{equation}
	\lim_{t \rightarrow \infty} \Phi(t,\tau) =
		\begin{bmatrix}
  			1 	&	0	&	0	\\
    			1/3 &	0	&	2/3	\\
			0	&	0	&	1
		\end{bmatrix}, \, \forall \tau \geq 0.
\end{equation}
  Therefore, each individual forms a consensus cluster, i.e., there are three consensus clusters. However, subgroup $\{ 1,3 \}$ with size two, is an EGC of the network. In other words, starting at an arbitrary initial time $t_0 \geq 0$, irrespective of the initial opinion of individual 2, an agreement on value $x^*$ is achieved if individuals 1 and 3 initialize their opinions at $x^*$.
\end{remark}


\section{Lower Bounds on the $\text{Rank}$ of chains}
\label{lower bounds}

In this section, we clarify how the underlying chain of a network with dynamics (\ref{mc}) imposes lower bounds on the size of its smallest EGC, which is equal to $\Rank(A)$. We recall the following definition from \cite{Hend:11,Bolouki:12b}.
\begin{definition}
  The unbounded interactions graph of a chain $\{A(t)\}$, $\h_1(\V,\e_1)$, is a fixed directed graph such that for every distinct nodes $i,j \in \V$, $(i,j) \in \e_1$ if and only if:
  \begin{equation}
    \int_{0}^{\infty} a_{ji}(t) dt= \infty.
  \end{equation}
  In other words, a link is drawn from $i$ to $j$ if the total influence of individual $i$ on individual $j$ is unbounded over the infinite time interval.
\end{definition}

\begin{definition}
	A subset $\s' \subset \V$ is called a \textit{s-root} of $\h_1(\V,\e_1)$ if for every node $i \in \V$, we have $i \in \s'$ or there exists $j \in \s'$ such that $i$ is reachable from $j$.
\end{definition}

\begin{theorem}
\label{lower-2}
  Let $\h_1(\V,\e_1)$ be the unbounded interaction graph associated with chain $\{A(t)\}$. Then, $\Rank(A)$ is greater than or equal to the size of the smallest s-root of $\h_1(\V,\e_1)$.
\end{theorem}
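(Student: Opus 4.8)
The plan is to bound $\Rank(A)$ below by the number of ``source'' components of $\h_1(\V,\e_1)$ and to note that this number dominates the size of the smallest s-root. Let $\s_1,\dots,\s_k$ be the strongly connected components of $\h_1(\V,\e_1)$ having no incoming edge, i.e.\ the sources of the acyclic graph obtained by contracting each strongly connected component. Because that condensation is finite, every component --- hence every node of $\V$ --- is reachable from one of the $\s_m$, so selecting a single node from each $\s_m$ yields an s-root; thus the smallest s-root has size at most $k$, and it suffices to prove $\Rank(A)\ge k$.

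By definition of a source, $\int_0^\infty a_{vw}(t)\,dt<\infty$ whenever $v\in\s_m$ and $w\in\V\setminus\s_m$. Form a new chain $\{B(t)\}$ by setting $b_{vw}(t)=0$ for all such pairs $v,w$, compensating on the diagonal so that every row of $B(t)$ still sums to zero, and keeping all other entries of $A(t)$ unchanged. Then $\{B(t)\}$ is still an admissible underlying chain, and $\int_0^\infty\|A(t)-B(t)\|\,dt<\infty$, being a finite sum of finite integrals; hence $\{B(t)\}$ is an $l_1$-approximation of $\{A(t)\}$ and, by Lemma~\ref{rank and approximation}, $\Rank(B)=\Rank(A)$. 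In $\{B(t)\}$ each $\s_m$ is dynamically isolated: for $v\in\s_m$, the $v$-th row of the state transition matrix of $\{B(t)\}$ evolves solely through rows indexed in $\s_m$ and at the initial time is supported on $\{v\}\subseteq\s_m$, so by uniqueness of solutions it remains a stochastic vector supported on $\s_m$ for all time. Consequently, for every $t$, among the points whose convex hull defines the polytope $C_{t,0}$ of $\{B(t)\}$ at least one --- the one attached to any $v\in\s_m$ --- lies in the face $\Delta_{\s_m}$ of stochastic vectors supported on $\s_m$.

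Therefore $C_{t,0}\cap\Delta_{\s_m}\neq\emptyset$ for every $t$ and $m$. Since the polytopes $C_{t,0}$ form a nested decreasing family (Lemma~\ref{convex-hull}) and each $\Delta_{\s_m}$ is compact, the limiting polytope $C_0$ still meets every $\Delta_{\s_m}$; choose $p^{(m)}\in C_0\cap\Delta_{\s_m}$ for $m=1,\dots,k$. These vectors are nonzero with pairwise disjoint supports, hence linearly independent, and each lies in the linear span of the vertices of $C_0$. By Lemma~\ref{space-generated} that span has dimension $\Rank(B)$, so $\Rank(B)\ge k$, and thus $\Rank(A)=\Rank(B)\ge k\ge(\text{size of the smallest s-root})$, as claimed. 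The points needing care are the verification that $\{B(t)\}$ is still a valid chain of intensity matrices and a genuine $l_1$-approximation, and the passage of the condition ``meets $\Delta_{\s_m}$'' to the limit $C_{t,0}\to C_0$, which is exactly where the monotonicity in Lemma~\ref{convex-hull} and compactness enter; everything else is routine.
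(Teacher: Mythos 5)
Your proof is correct, but it takes a genuinely different route from the paper's. Both arguments begin by passing to an $l_1$-approximation $\{B(t)\}$ and invoking Lemma \ref{rank and approximation}; the paper deletes every interaction not in $\e_1$, whereas you delete only the (finitely integrable) influences entering the source strongly connected components. The divergence is in the main step. The paper argues dynamically through Theorem \ref{rank=EGC}: it shows that any EGC $\s$ of the decoupled network must be an s-root, since otherwise the individuals outside the reachable set $n(\s)$ receive no influence from $\s$ and, if initialized at a common value different from the target, never move; hence the smallest EGC, whose size is $\Rank(B)=\Rank(A)$, is at least as large as the smallest s-root. You instead work entirely inside the geometric framework of Section \ref{a geometric framework}: after observing that the smallest s-root has size exactly the number $k$ of source components, you exhibit, for each source component $\s_m$, a point of the limiting polytope $C_0$ that is a stochastic vector supported on $\s_m$ (using the closedness of the $\s_m$-block of the decoupled dynamics, the nestedness from Lemma \ref{convex-hull}, and compactness to pass to the limit), and conclude from the disjointness of supports and Lemma \ref{space-generated} that $\Rank(B)\ge k$. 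The paper's route is shorter and yields the slightly stronger structural fact that every EGC of the decoupled chain is an s-root; yours avoids the EGC characterization altogether, makes explicit that the lower bound in the theorem equals the number of source components of $\h_1$, and shows that the polytope machinery alone suffices. The points you flag as needing care (validity of $\{B(t)\}$ as an intensity chain, finiteness of the $l_1$ distance, and the nonemptiness of the limit of the nested intersections with the faces of the simplex) all check out.
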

\begin{proof}
  Form a chain $\{B(t)\}$ from chain $\{A(t)\}$ by eliminating all influences that individual $i \in \V$ gets from individual $j \in \V$ if $(j,i) \not\in \e_1$. More specifically, for every $i \neq j \in \V$ and $t \geq 0$, we have:
  \begin{equation}
  	b_{ij}(t) =
	\begin{cases}
		a_{ij}(t)	& \text{ if } (j,i) \in \e_1\\
		0		& \text{ if } (j,i) \not\in \e_1
	\end{cases}
  \end{equation}
  and $b_{ii}(t) = -\sum_{j \neq i} b_{ij}(t)$, for every $i \in \V$ and $t \geq 0$. Since chain $\{B(t)\}$ is an $l_1$-approximation of chain $\{A(t)\}$, from Lemma \ref{rank and approximation}, the two chains share the same rank. Notice also that the two chains share the same unbounded interactions graph. Thus, it suffices to prove Theorem \ref{lower-2} for chain $\{B(t)\}$. Consider an opinion network with underlying chain $\{B(t)\}$:
	\begin{equation}
		\dot{y}(t) = B(t)y(t), \, t \geq t_0,
	\label{mcc}
	\end{equation}
where $y(t) \in \mathbb{R}^N$ is the vector of opinions. Since $\Rank(B)$ is the size of the smallest EGC of the network with dynamics (\ref{mcc}), it is sufficient to show that every EGC of the network with dynamics (\ref{mcc}) is a s-root of $\h_1$. Assume, on the contrary, that subset $\s \subset \V$ is an EGC which is not a s-root of $\h_1$. Define:
\begin{equation}
	n(\s) ~\triangleq~ \s ~\cup~ \{i ~|~ i \in \V,\, \exists j \in \s : i \text{ is reachable from } j \text{ in } \h_1\}
\end{equation}
Since $\s$ is not a s-root, $n(\s) \subsetneq \V$. From the definition of $n(\s)$, it is easy to see that there is no link from $n(\s)$ to $\V \backslash n(\s)$ in $\h_1$. According to the way that chain $\{B(t)\}$ was constructed, this means that $n(\s)$ has zero influence on $\V \backslash n(\s)$ at any time instant. Thus, since $\s \subset n(\s)$, individuals in $\s$ cannot, in general, lead individuals in $\mathcal{V} \backslash n(\s)$ to agreeing on an arbitrary value $x^*$. For instance, given a desired consensus value $x^*$, if the opinions of individuals in $\mathcal{V} \backslash n(\s)$ are all initialized at value $x^* +1$, they will never change, and consequently, they will never converge to $x^*$. Thus, $\s$ is not an EGC, which completes the proof.
\end{proof}

An important special case of Theorem \ref{lower-2} is described in the following. Let us first define the continuous time counterpart of the \textit{infinite flow graph} of a chain according to \cite{Touri:10a}.
\begin{definition}
  The infinite flow graph $\h_2(\V,\e_2)$ of a given chain $\{A(t)\}$, is an undirected graph formed as follows: for two distinct nodes $i,j \in \V$, draw a link between $i$ and $j$ in $\h_2$, if and only if:
  \begin{equation}
    \int_{0}^{\infty} (a_{ij}(t)+a_{ji}(t))dt = \infty
  \end{equation}
\end{definition}
We now have the following lower bound on the $\text{rank}$ of a chain which is a special case of Theorem \ref{lower-2}.
\begin{corollary}
  $\Rank(A)$ is greater than or equal to the number of connected components of the infinite flow graph associated with $\{A(t)\}$.
  \label{lower-1}
\end{corollary}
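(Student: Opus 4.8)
The plan is to deduce this directly from Theorem \ref{lower-2} by relating the infinite flow graph $\h_2$ to the unbounded interactions graph $\h_1$. First I would observe that, since $a_{ij}(t)$ and $a_{ji}(t)$ are non-negative, $\int_{0}^{\infty} (a_{ij}(t)+a_{ji}(t))\,dt = \infty$ holds if and only if at least one of $\int_{0}^{\infty} a_{ij}(t)\,dt$ and $\int_{0}^{\infty} a_{ji}(t)\,dt$ diverges; that is, $\{i,j\}$ is an edge of $\h_2$ exactly when $(i,j)\in\e_1$ or $(j,i)\in\e_1$. Hence $\h_2$ is precisely the underlying undirected graph of $\h_1$, and the connected components of $\h_2$ coincide with the weakly connected components of $\h_1$.

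Next I would argue that any s-root $\s'$ of $\h_1$ must contain at least one node from each weakly connected component of $\h_1$. Indeed, if $i$ is reachable from $j$ along a directed path in $\h_1$, then $i$ and $j$ lie in the same weakly connected component; therefore a node $i$ can be ``covered'' in the sense of the s-root definition only by belonging to $\s'$ itself or by being reachable from an element of $\s'$ lying in the same component. Since every weakly connected component is non-empty, it follows that $\s'$ meets each such component, so $|\s'|$ is at least the number of weakly connected components of $\h_1$, equivalently the number of connected components of $\h_2$. (Alternatively, one could mimic the proof of Theorem \ref{lower-2}: passing to the $l_1$-approximation $\{B(t)\}$ that zeroes out all bounded interactions decouples the dynamics across the components of $\h_2$, and an EGC omitting a whole component cannot steer the individuals of that component away from an arbitrarily chosen initialization.)

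Combining these two steps with Theorem \ref{lower-2}, which gives $\Rank(A) \geq |\s'|$ for the smallest s-root $\s'$ of $\h_1$, yields that $\Rank(A)$ is at least the number of connected components of $\h_2$, as claimed. The only point requiring care, though it is elementary, is the reachability observation in the second step: one must verify that directed reachability in $\h_1$ never crosses between distinct components of its underlying undirected graph, so that covering all of $\V$ genuinely forces a separate representative in each component. I do not expect any serious obstacle here; the corollary is essentially a bookkeeping consequence of Theorem \ref{lower-2}.
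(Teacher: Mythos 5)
Your argument is correct and follows exactly the route the paper intends: the corollary is presented there as an immediate special case of Theorem \ref{lower-2}, via the observation that $\h_2$ is the underlying undirected graph of $\h_1$ and that any s-root must meet every weakly connected component. Your write-up simply makes explicit the bookkeeping the paper leaves implicit, and it is sound.
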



\section{Rank of Time-Invariant (TI) chains}
\label{rank of TI chains}

Let $\{ A(t) \}$ be a TI chain, i.e., $A(t) = \hat A$, $\forall t \geq 0$, where $\hat A$ is a fixed matrix with the property that each of its rows adds up to zero and its off-diagonal elements are non-negative. Assume that $\rank(\hat{A})$ and $\nullity(\hat{A})$ represent the rank and the nullity of $\hat{A}$. Notice that $\mathrm{roman}$ style is used for matrix operators as opposed to the chain operators so as to avoid any ambiguity. For state transition matrix $\Phi(t,\tau)$ associated with TI chain $\{\hat{A}\}$, we have:
\begin{equation}
  \Phi(t,\tau) = e^{\hat A (t-\tau)},\, t \geq \tau \geq 0.
\end{equation}
Note that $\hat A$ is marginally stable and has all negative eigenvalues but one eigenvalue zero with algebraic multiplicity $\nullity(\hat A)$. Thus, $\lim_{t-\tau \rightarrow \infty}\Phi(t,\tau)$ exists, and the limit has eigenvalue zero with algebraic multiplicity $\rank(\hat A)$ and eigenvalue one with algebraic multiplicity $\nullity(\hat A)$. Hence:
\begin{equation}
  \Rank(A) = \nullity(\hat A).
\end{equation}
Employing a graph theoretic approach, treating $\hat A$ as the Laplacian of its associated \textit{weighted directed} graph, $\nullity(\hat A)$ represents the size of the smallest s-root of the graph (see Fig. \ref{Fig3}).

\begin{figure}[h]
  \begin{center}
  \captionsetup{justification=centering}
    \includegraphics[scale=1]{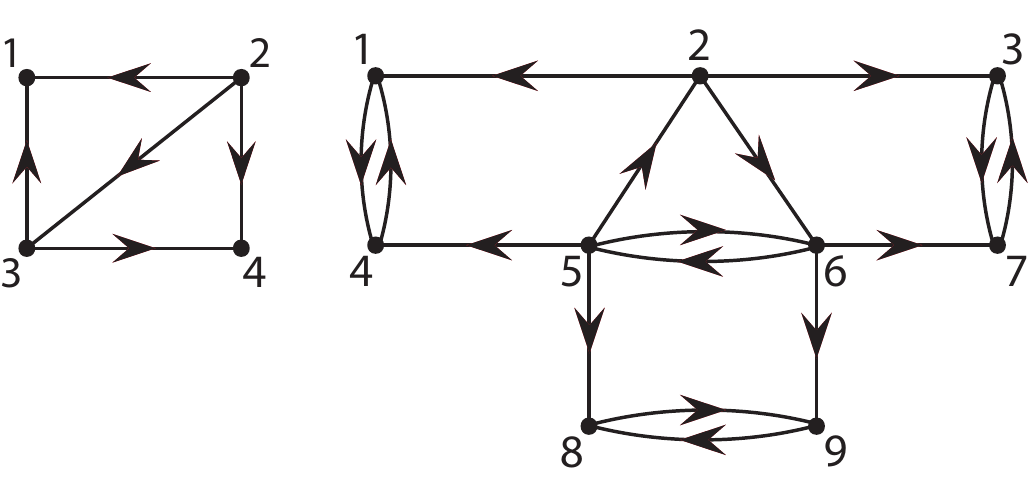}
    \caption{Unweighted underlying graph of two TI linear algorithms. $\{1,4\}$ (left) and $\{1,3,8\}$ (right) are the smallest s-roots.}
    \label{Fig3}
  \end{center}
\end{figure}

Since an unweighted version of the graph described above serves as the unbounded interactions graph associated with TI chain $\{ A(t) \}$, $A(t) = \hat A$, $\forall t \geq 0$, we have the following corollary.

\begin{corollary}
  For a TI chain $\{A(t)\}$, the lower bound provided in Theorem \ref{lower-2} is achieved. More specifically, $\Rank(A)$ is size of the smallest s-root of the unbounded interactions graph associated with $\{A(t)\}$.
\end{corollary}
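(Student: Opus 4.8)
The plan is to establish the single equality $\Rank(A) = $ (size of the smallest s-root of $\h_1(\V,\e_1)$); this gives both that the bound of Theorem~\ref{lower-2} is attained and the ``more specifically'' clause, since Theorem~\ref{lower-2} already supplies the inequality ``$\geq$''. We have already argued $\Rank(A) = \nullity(\hat A)$ for TI chains, and for a constant rate $a_{ji}(t) = a_{ji} \geq 0$ one has $\int_0^\infty a_{ji}(t)\,dt = \infty$ exactly when $a_{ji} > 0$, so $\h_1$ is precisely the unweighted version of the weighted directed graph associated with $\hat A$ (arc $i \to j$ carrying weight $a_{ji}$); in particular the two graphs have the same SCCs and the same reachability relation. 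Thus everything reduces to the algebraic-combinatorial identity $\nullity(\hat A) = $ (size of the smallest s-root of $\h_1$).

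I would split this identity into a combinatorial half and an algebraic half. For the combinatorial half, pass to the condensation DAG obtained by contracting each SCC of $\h_1$, and call an SCC a \emph{source SCC} when its image in the condensation has no incoming arc. One shows that $\s' \subseteq \V$ is an s-root if and only if it meets every source SCC: a node of a source SCC is reachable in $\h_1$ only from within that SCC (forcing the intersection), while conversely, since every node of a DAG is reachable from some source, a set meeting every source SCC already reaches all of $\V$. Hence the smallest s-root has size equal to the number of source SCCs of $\h_1$.

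The algebraic half --- which I expect to be the main obstacle --- is to show $\nullity(\hat A)$ equals that number of source SCCs. I would read $\hat A$ as the generator of a finite continuous-time Markov chain with transition rate $a_{ji}$ from state $j$ to state $i$; then the communicating classes of the chain are exactly the SCCs of $\h_1$, and such a class is closed (recurrent) precisely when no $\h_1$-arc enters it from outside, i.e. precisely when it is a source SCC. Standard Markov-chain theory then gives that the stationary measures (the left null vectors of $\hat A$) span a space of dimension equal to the number of recurrent classes, and since rank is transpose-invariant this dimension is $\nullity(\hat A)$. One can also argue entirely within the paper's framework: by Theorem~\ref{rank=c}, $\Rank(A) = c$, and as $t \to \infty$ the rows of $\Phi(t,\tau) = e^{\hat A(t-\tau)}$ converge to convex combinations of the stationary distributions $\pi^{C}$ of the source SCCs $C$; these $\pi^{C}$ have pairwise disjoint supports, so they are exactly the vertices of the limiting polytope $C_\tau$, whence $c$ equals the number of source SCCs.

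Assembling the pieces, $\Rank(A) = \nullity(\hat A) = $ (number of source SCCs of $\h_1$) $=$ (size of the smallest s-root of $\h_1$), which is the claim; and Theorem~\ref{lower-2} identifies this common value with the announced lower bound, so that bound is achieved. The delicate points are the bookkeeping of arc directions (an arc $(i,j) \in \e_1$ encodes the entry $a_{ji}$, so ``reachable from'' in $\h_1$ runs opposite to the Markov-chain transition direction) and checking that degenerate cases --- a stubborn agent, i.e. a singleton source SCC with an all-zero row of $\hat A$ --- are counted identically by the nullity, the source-SCC count, and the s-root count.
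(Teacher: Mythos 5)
Your proposal is correct and follows essentially the same route as the paper: reduce to $\Rank(A)=\nullity(\hat A)$ via the spectrum of $e^{\hat A(t-\tau)}$, identify $\h_1$ with the unweighted version of the weighted directed graph of $\hat A$, and invoke the identity between $\nullity(\hat A)$ and the smallest s-root size. The only difference is that the paper simply asserts this last Laplacian/graph identity (pointing to a figure), whereas you actually prove it via source SCCs of the condensation and the count of recurrent classes; your bookkeeping of the arc-direction convention for $\e_1$ is also handled correctly.
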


Remember that any TI chain $\{A(t)\}$ is class-ergodic and the number of ergodic classes provides an upper bound for $\Rank(A)$ according to Corollary \ref{upper bound ergodic}. For example, for the underlying graphs depicted in Fig. \ref{Fig3}, the number of ergodic classes are 4 (left) and 6 (right).

 The graph interpretation of the notion of $\text{rank}$ explains the following two properties:
\begin{enumerate}[(i)]
  \item For any TI chain $\{A(t)\}$ and $\alpha > 0$:
    \begin{equation}
      \Rank(\{\alpha A(t)\}) = \Rank(\{A(t)\}).
    \end{equation}
  \item For any two TI chains $\{A(t)\}$ and $\{B(t)\}$,
    \begin{equation}
      \Rank(\{A(t)+B(t)\}) \leq \min \Big\{ \Rank(\{A(t)\}),\Rank(\{B(t)\}) \Big\}.
    \end{equation}
\end{enumerate}
\begin{remark}
While Statement (i) seems to hold for any time-varying chain $\{A(t)\}$ as well, there exist time-varying chains $\{A(t)\}$ and $\{B(t)\}$ that do not satisfy Statement (ii). This means that more interactions between agents may surprisingly increase the size of the smallest EGC of a network. The following is an example; let:
\begin{equation}
  A(t) = \begin{bmatrix} -1 & 1 & 0 \\ 0 & 0 & 0 \\ 0 & 0 & 0 \end{bmatrix} \text{ if } t \in [2^{2k}-1, 2^{2k}), \, k \in \mathbb{N},
\end{equation}
and,
\begin{equation}
  A(t) =  \begin{bmatrix} 0 & 0 & 0 \\ 0 & -1 & 1 \\ 0 & 0 & 0 \end{bmatrix} \text{ if } t \in [2^{2k}, 2^{2k+1}-1), \, k \in \mathbb{N},
\end{equation}
and $A(t)=\textbf{0}_{3 \times 3}$ elsewhere. Let also:
\begin{equation}
  B(t) = \begin{bmatrix} 0 & 0 & 0 \\ 0 & 0 & 0 \\ 0 & 1 & -1 \end{bmatrix} \text{ if } t \in [2^{2k+1}-1, 2^{2k+1}), \, k \in \mathbb{N},
\end{equation}
and,
\begin{equation}
  B(t) =  \begin{bmatrix} 0 & 0 & 0 \\ 1 & -1 & 0 \\ 0 & 0 & 0 \end{bmatrix} \text{ if } t \in [2^{2k+1}, 2^{2k+2}-1), \, k \in \mathbb{N},
\end{equation}
and $B(t)=\textbf{0}_{3 \times 3}$ elsewhere. Note that at every time instant either $A(t)$ or $B(t)$ is $\textbf{0}_{3 \times 3}$. It is easy to see that both $\{A(t)\}$ and $\{B(t)\}$ are ergodic chains. More specifically, for every $\tau \geq 0$, we have:
\begin{equation}
  \lim_{t \rightarrow \infty} \Phi_A(t,\tau) = \begin{bmatrix} 0 & 0 & 1 \end{bmatrix} \begin{bmatrix} 1 & 1 & 1 \end{bmatrix}', 
\end{equation}
and,
\begin{equation}
  \lim_{t \rightarrow \infty} \Phi_B(t,\tau) = \begin{bmatrix} 1 & 0 & 0 \end{bmatrix} \begin{bmatrix} 1 & 1 & 1 \end{bmatrix}'.
\end{equation}
Therefore, $\Rank(A) = \Rank(B) = 1$. However, one can show that $\Rank(\{A(t) + B(t) \}) =2$. More precisely, subgroup $\{1,3\}$ forms the smallest EGC of the network with underlying chain $\{A(t) + B(t)\}$.
\end{remark}

\section{Rank of chains in Class $\mathcal{P}^*$}
\label{Class P*}

From the fundamental work \cite{Kolmo:36}, it is known that for every state transition matrix $\Phi(t,\tau)$, $t \geq \tau \geq 0$, associated with a chain $\{A(t)\}$, there exists a sequence of stochastic row vectors $\{ \pi(t) \}$, called an \textit{absolute probability sequence}, such that:
\begin{equation}
  \pi(\tau) = \pi(t) \Phi(t,\tau), \, \forall t,\tau, \, t \geq \tau \geq 0.
\end{equation}
Remember that by a stochastic vector, we mean a vector with elements adding up to 1. We may now extend \cite[Definition 3]{Touri:11c} to the continuous time case in the following.
\begin{definition}
 A chain $\{A(t)\}$ is said to be in Class $\mathcal{P}^*$ if its associated state transition matrix $\Phi(t,\tau)$, $t \geq \tau \geq 0$ admits an absolute probability sequence $\{ \pi(t) \}$ such that for some constant $p^* > 0$:
 \begin{equation}
   \pi(t) > p^*, \, \forall t \geq 0.
 \end{equation}
\end{definition}
It is possible to characterize chains of Class $\mathcal{P}^*$ more concretely. To do so, we first state the following lemma.
\begin{lemma}
  For every $j \in \V$,
	\begin{equation}
	  \pi_j(\tau) \leq \inf \left\{\sum_{i \in \V}\Phi_{i,j}(t,\tau) \, | \, t \geq \tau \right\}.
	\end{equation}
	\label{inf}
\end{lemma}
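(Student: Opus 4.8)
The plan is to exploit the defining property of the absolute probability sequence $\{\pi(t)\}$, namely $\pi(\tau) = \pi(t)\Phi(t,\tau)$ for every $t \geq \tau \geq 0$, and then bound each coefficient. Writing the $j$th component of this identity out explicitly gives
\begin{equation}
  \pi_j(\tau) = \sum_{i \in \V} \pi_i(t)\,\Phi_{i,j}(t,\tau), \quad \forall t \geq \tau \geq 0.
\end{equation}
Since $\pi(t)$ is a stochastic row vector, each entry satisfies $0 \leq \pi_i(t) \leq 1$, and since $\Phi_{i,j}(t,\tau) \geq 0$ (recall from the Remark that $\Phi(t,\tau)$ is row-stochastic with non-negative entries), each summand is bounded above by $\Phi_{i,j}(t,\tau)$. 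Hence
\begin{equation}
  \pi_j(\tau) = \sum_{i \in \V} \pi_i(t)\,\Phi_{i,j}(t,\tau) \leq \sum_{i \in \V} \Phi_{i,j}(t,\tau), \quad \forall t \geq \tau.
\end{equation}

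Because the right-hand side inequality holds for \emph{every} $t \geq \tau$ while the left-hand side does not depend on $t$, we may pass to the infimum over $t \geq \tau$ on the right, which yields exactly the claimed bound $\pi_j(\tau) \leq \inf\{\sum_{i \in \V}\Phi_{i,j}(t,\tau) \mid t \geq \tau\}$. There is no real obstacle here; the only point to state carefully is that the entries of $\pi(t)$ lie in $[0,1]$, which is immediate from $\pi(t)$ being stochastic with non-negative entries. This lemma will presumably be used next to show that for chains in Class $\mathcal{P}^*$ the coordinate sums $\sum_i \Phi_{i,j}(t,\tau)$ stay bounded away from zero, feeding into the concrete characterization of Class $\mathcal{P}^*$ announced just before the statement.
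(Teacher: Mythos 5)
Your argument is correct and is essentially identical to the paper's: both write $\pi_j(\tau) = \sum_{i \in \V} \pi_i(t)\,\Phi_{i,j}(t,\tau)$ from the absolute probability sequence identity, bound this by $\sum_{i \in \V}\Phi_{i,j}(t,\tau)$ using $0 \leq \pi_i(t) \leq 1$ and non-negativity of $\Phi$, and take the infimum over $t \geq \tau$. No differences worth noting.
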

\begin{proof}
  Obvious, since for every $t \geq \tau$:
	\begin{equation}
	  \pi_j(\tau) = \pi(t) \Phi^j(t,\tau) = \sum_{i \in \V} \pi_i(t) \Phi_{i,j}(t,\tau) \leq \sum_{i \in \V}\Phi_{i,j}(t,\tau).
	\end{equation}
\end{proof}
We now have the following lemma that provides an alternative definition of chains in Class $\mathcal{P}^*$.
\begin{lemma}
  A chain $\{A(t)\}$ is in Class $\mathcal{P}^*$ if and only if for its state transition matrix $\Phi(t,\tau)$, $t \geq \tau \geq 0$, we have:
	\begin{equation}
	  \inf_{t , \tau} \left\{\sum_{i \in \V}\Phi_{i,j}(t,\tau) \, | \, t \geq \tau \geq 0 \right\} > 0, \, \forall j \in \V.
	\end{equation}
	\label{pstar}
\end{lemma}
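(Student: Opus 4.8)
The plan is to prove the two implications separately. The forward (``only if'') direction will be immediate from Lemma \ref{inf}, while the reverse (``if'') direction will require constructing an absolute probability sequence as a subsequential limit.

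For the ``only if'' direction, I would assume $\{A(t)\} \in \mathcal{P}^*$, so that there is an absolute probability sequence $\{\pi(t)\}$ and a constant $p^* > 0$ with $\pi(t) > p^*$ componentwise for every $t \geq 0$. Lemma \ref{inf} gives $\pi_j(\tau) \leq \inf\{ \sum_{i\in\V}\Phi_{i,j}(t,\tau) \mid t \geq \tau \}$ for every $\tau \geq 0$ and $j \in \V$; combining this with $\pi_j(\tau) > p^*$ and then taking the infimum over $\tau \geq 0$ yields $\inf_{t,\tau}\{ \sum_{i\in\V}\Phi_{i,j}(t,\tau) \mid t \geq \tau \geq 0 \} \geq p^* > 0$ for every $j$, which is the desired conclusion.

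For the ``if'' direction, suppose $p_j \triangleq \inf_{t,\tau}\{ \sum_{i\in\V}\Phi_{i,j}(t,\tau) \mid t \geq \tau \geq 0 \} > 0$ for every $j \in \V$. I would build an absolute probability sequence as a limit of ``pulled-back uniform distributions'': for each $n \in \mathbb{N}$, set $\pi^{(n)}(\tau) \triangleq \tfrac{1}{N}\mathbf{1}_N' \Phi(n,\tau)$ for $\tau \in [0,n]$. Using the semigroup identity $\Phi(n,\tau_1) = \Phi(n,\tau_2)\Phi(\tau_2,\tau_1)$ together with the fact that $\Phi(n,\tau)$ is nonnegative and row-stochastic, each $\pi^{(n)}$ is a stochastic vector satisfying the absolute probability relation $\pi^{(n)}(\tau_1) = \pi^{(n)}(\tau_2)\Phi(\tau_2,\tau_1)$ on its domain. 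Since the probability simplex in $\mathbb{R}^N$ is compact, a diagonal argument over the countable dense set $\mathbb{Q}_{\geq 0}$ produces a subsequence $(n_k)$ along which $\pi(q) \triangleq \lim_k \pi^{(n_k)}(q)$ exists for every rational $q \geq 0$; I then extend to all $t \geq 0$ by $\pi(t) \triangleq \pi(q)\Phi(q,t)$ for any rational $q \geq t$, which is well-defined by the semigroup property and continuity of matrix multiplication, and which coincides with $\lim_k \pi^{(n_k)}(t) = \lim_k \tfrac{1}{N}\mathbf{1}_N'\Phi(n_k,t)$. One checks that $\{\pi(t)\}$ is stochastic and satisfies $\pi(\tau_1) = \pi(\tau_2)\Phi(\tau_2,\tau_1)$ for all $\tau_1 \leq \tau_2$, hence is an absolute probability sequence. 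Moreover, the $j$-th component of $\tfrac{1}{N}\mathbf{1}_N'\Phi(n_k,t)$ is $\tfrac{1}{N}\sum_{i\in\V}\Phi_{i,j}(n_k,t) \geq \tfrac{1}{N}p_j$, so passing to the limit gives $\pi_j(t) \geq \tfrac{1}{N}p_j \geq \tfrac{1}{N}\min_j p_j > 0$ for every $t \geq 0$; taking $p^* \triangleq \tfrac{1}{2N}\min_j p_j$ then gives $\pi(t) > p^*$, so $\{A(t)\} \in \mathcal{P}^*$.

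The main obstacle is the reverse direction's limiting construction. The delicate points are: producing a single subsequence $(n_k)$ that works simultaneously at all times (handled by first fixing the countable dense set $\mathbb{Q}_{\geq 0}$ and only afterwards extending via the semigroup identity), and verifying that the limit genuinely inherits both the absolute probability relation and the uniform strict lower bound. I would also note that the extension step uses only the algebraic composition rule for $\Phi$ and continuity of finite-dimensional matrix multiplication, so no regularity of the entries $a_{ij}(t)$ beyond measurability — already assumed — is required.
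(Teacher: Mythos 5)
Your proposal is correct and takes essentially the same route as the paper: the ``only if'' direction is the same immediate application of Lemma \ref{inf}, and your pulled-back uniform initialization $\tfrac{1}{N}\mathbf{1}_N'\Phi(n,\tau)$ with a compactness/diagonal subsequence argument is exactly the Kolmogorov-style construction that the paper invokes by citation for the ``if'' direction. You have merely written out in full the details the paper delegates to the reference.
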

\begin{proof}
  The ``only if'' part is an immediate result of Lemma \ref{inf}, and the ``if'' part is a result of the way an absolute probability sequence can be obtained in \cite{Kolmo:36} by always choosing to initialize agent probabilities on finite intervals with a uniform distribution.
\end{proof}
Lemma \ref{pstar} roughly implies that the underlying chain of a system is in Class $\mathcal{P}^*$, if and only if the opinion of any individual, at any time, continues to have influence on the formation of individuals' opinions at all future times. We now state a theorem on the class-ergodicity of chains in Class $\mathcal{P}^*$ (see \cite[Theorem 6]{Bolouki:14a}).

\begin{theorem}
\label{cont}
  Every chain $\{ A(t) \}$ in Class $\mathcal{P}^*$ is class-ergodic. Furthermore, the number of ergodic classes is equal to the number of connected components of the infinite flow graph of chain $\{ A(t) \}$.
\end{theorem}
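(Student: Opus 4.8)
The plan is to establish the claim by a squeeze, adding one new inclusion to the two bounds already available. By Corollary~\ref{lower-1}, $\Rank(A)$ is at least the number $p$ of connected components of the infinite flow graph $\h_2$, and by Corollary~\ref{upper bound ergodic}, $\Rank(A)$ is at most the number of ergodicity classes of $\{A(t)\}$. The class-ergodicity half of the theorem is \cite[Theorem~6]{Bolouki:14a}; I would take it as given, so that the ergodicity classes coincide with the ergodic classes, and reduce the theorem to showing that the number $q$ of ergodic classes satisfies $q\le p$: then $p\le\Rank(A)\le q\le p$ forces $p=\Rank(A)=q$, which is exactly the statement (and, as a by-product, shows the lower bound of Corollary~\ref{lower-1} is attained on $\mathcal{P}^{*}$ chains). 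Since the ergodic-class relation is an equivalence relation and ``belonging to the same connected component of $\h_2$'' is the transitive closure of ``being joined by an edge of $\h_2$'', it is enough to prove that two individuals joined by an edge of $\h_2$ lie in the same ergodic class.

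The device I would use for this is an energy estimate drawn from the $\mathcal{P}^{*}$ hypothesis. Fix an absolute probability sequence $\{\pi(t)\}$ with $\pi_k(t)>p^{*}$ for all $k\in\V$ and $t\ge0$. Differentiating $\pi(\tau)=\pi(t)\Phi(t,\tau)$ in $t$ (legitimate because $\pi$ may be taken absolutely continuous) and using $\tfrac{\partial}{\partial t}\Phi(t,\tau)=A(t)\Phi(t,\tau)$ with invertibility of $\Phi(t,\tau)$ gives $\dot\pi(t)=-\pi(t)A(t)$. For fixed $\tau\ge0$ and arbitrary $x(\tau)$, put $x(t)=\Phi(t,\tau)x(\tau)$, so $\dot x=A(t)x$; then $m:=\pi(t)x(t)$ is constant, since $\tfrac{d}{dt}(\pi x)=\dot\pi x+\pi\dot x=-\pi Ax+\pi Ax=0$, and a short calculation using $A(t)\mathbf{1}_N=\mathbf{0}_N$ shows that $V(t):=\sum_{k}\pi_k(t)\bigl(x_k(t)-m\bigr)^2$ obeys
\begin{equation}
\dot V(t)=-\sum_{k\neq l}\pi_k(t)\,a_{kl}(t)\,\bigl(x_k(t)-x_l(t)\bigr)^2\le 0 .
\end{equation}
So $V\ge0$ is nonincreasing, the integral of $-\dot V$ over $[\tau,\infty)$ is finite, and, keeping only one pair of indices and using $\pi_k(t)\ge p^{*}$,
\begin{equation}
\int_{\tau}^{\infty}\bigl(a_{ij}(t)+a_{ji}(t)\bigr)\bigl(x_i(t)-x_j(t)\bigr)^2\,dt<\infty \qquad\text{for all }i,j\in\V .
\end{equation}

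Now let $(i,j)$ be an edge of $\h_2$, so $\int_0^{\infty}\!\bigl(a_{ij}(t)+a_{ji}(t)\bigr)dt=\infty$, and hence also $\int_\tau^{\infty}\!\bigl(a_{ij}+a_{ji}\bigr)dt=\infty$. The finiteness just obtained then forces $\liminf_{t\to\infty}\lvert x_i(t)-x_j(t)\rvert=0$, since otherwise $\bigl(a_{ij}+a_{ji}\bigr)(x_i-x_j)^2$ would dominate a positive multiple of $a_{ij}+a_{ji}$ on a half-line. By class-ergodicity $x(t)=\Phi(t,\tau)x(\tau)$ converges, so $x_i(t)$ and $x_j(t)$ converge individually, and the vanishing $\liminf$ forces their limits to agree. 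As $x(\tau)$ is arbitrary and $x_i(t)-x_j(t)=\bigl(\Phi'_i(t,\tau)-\Phi'_j(t,\tau)\bigr)'x(\tau)$, this gives $\lim_{t\to\infty}\Phi'_i(t,\tau)=\lim_{t\to\infty}\Phi'_j(t,\tau)$ for every $\tau\ge0$, i.e.\ $i$ and $j$ share an ergodic class. Propagating along the edges inside each connected component of $\h_2$ shows each component lies in a single ergodic class, whence $q\le p$ and the squeeze closes.

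The genuine obstacle is the class-ergodicity input, not the count. The energy inequality only delivers $L^1$-integrability of the squared disagreements, hence a $\liminf$ rather than a $\lim$ statement, which does not by itself yield convergence of the rows $\Phi'_k(t,\tau)$; that is precisely what \cite[Theorem~6]{Bolouki:14a} supplies. A self-contained proof would confine each trajectory $\Phi'_k(t,\tau)$ in the monotone nested polytopes $C_{t,\tau}$ of Section~\ref{a geometric framework} and rule out persistent oscillation of it along the boundary of the limiting polytope $C_\tau$ --- this non-oscillation step, rather than the dissipation inequality, is where the work sits. With class-ergodicity granted, what remains is the elementary squeeze together with the one-line energy computation outlined above.
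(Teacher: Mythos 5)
The paper offers no proof of this theorem at all: it is imported wholesale as \cite[Theorem 6]{Bolouki:14a}, and the paper then uses it in the \emph{opposite} direction to yours (it invokes the theorem to conclude that the upper bound of Corollary~\ref{upper bound ergodic} and the lower bound of Corollary~\ref{lower-1} coincide with $\Rank(A)$ on Class $\mathcal{P}^*$). Your proposal is therefore a genuinely different route for the counting half: you still outsource class-ergodicity to the same citation, but you derive the equality ``number of ergodic classes $=$ number of components of $\h_2$'' inside the paper's own machinery, via the squeeze $p\le\Rank(A)\le q\le p$ plus the quadratic comparison function $V(t)=\sum_k\pi_k(t)(x_k(t)-m)^2$. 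I checked the dissipation identity: with $\dot\pi=-\pi A$ (valid since $\pi(\tau)=\pi(T)\Phi(T,\tau)$ is absolutely continuous on compacta) and the zero-row-sum property one indeed gets $\dot V=-\sum_{k\ne l}\pi_k a_{kl}(x_k-x_l)^2\le0$, the $\mathcal{P}^*$ bound $\pi_k\ge p^*$ turns the resulting integrability into $\int(a_{ij}+a_{ji})(x_i-x_j)^2\,dt<\infty$, and combined with $\int(a_{ij}+a_{ji})\,dt=\infty$ along an edge of $\h_2$ (using local integrability of the $a_{ij}$, which is implicit in the existence of $\Phi$) and convergence of $x(t)$ from class-ergodicity, adjacent individuals share an ergodic class; the squeeze then closes correctly and is not circular, since Corollaries~\ref{upper bound ergodic} and~\ref{lower-1} do not depend on this theorem. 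What your approach buys is a self-contained proof of the ``furthermore'' clause and, as a by-product, a direct proof that both bounds are attained on $\mathcal{P}^*$ chains; what it does not buy --- as you candidly note --- is the class-ergodicity assertion itself, which is the substantive half of the statement and remains exactly as external to your argument as it is to the paper's. So judged as a proof of the full theorem it is incomplete in the same place the paper is, but the part you do prove is sound and arguably more informative than the paper's bare citation.
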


Theorem \ref{cont} implies that if chain $\{ A(t) \}$ is in Class $\mathcal{P}^*$, the upper bound provided for its $\text{rank}$ in Corollary \ref{upper bound ergodic} is equal the lower bound provided in Corollary \ref{lower-1}. Therefore, both bounds become equal to $\Rank(A)$.
 \begin{corollary}
   The $\text{rank}$ of a chain in Class $\mathcal{P}^*$ is determined by the number of connected components of the infinite flow graph associated with the chain.
 \end{corollary}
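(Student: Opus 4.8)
The plan is a simple squeeze argument: bracket $\Rank(A)$ between an upper bound and a lower bound that are forced to coincide for chains in Class $\mathcal{P}^*$.

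First I would invoke Theorem \ref{cont}. Since $\{A(t)\}$ belongs to Class $\mathcal{P}^*$, Theorem \ref{cont} delivers two facts at once: $\{A(t)\}$ is class-ergodic, and the number of its ergodic classes equals the number $k$ of connected components of the infinite flow graph $\h_2$ associated with $\{A(t)\}$. Because $\{A(t)\}$ is class-ergodic, its ergodicity classes in the sense of Definition \ref{ergodicity classes} coincide with its ergodic classes, so the number of ergodicity classes is also $k$.

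Next I would apply the bounds already established. Corollary \ref{upper bound ergodic} gives $\Rank(A) \le k$, the number of ergodicity classes of $\{A(t)\}$. Corollary \ref{lower-1} gives $\Rank(A) \ge k$, the number of connected components of $\h_2$. Combining the two inequalities yields $\Rank(A) = k$; that is, the rank of a Class $\mathcal{P}^*$ chain is exactly the number of connected components of its infinite flow graph, and by Theorem \ref{rank=EGC} this is also the size of the smallest EGC of the corresponding network.

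There is essentially no obstacle at this stage: the substantive work was carried out in establishing Corollary \ref{upper bound ergodic} (via the nested-polytope analysis of Section \ref{a geometric framework}), Corollary \ref{lower-1} (via Theorem \ref{lower-2} and the infinite flow / unbounded interactions graphs), and Theorem \ref{cont} (cited from \cite{Bolouki:14a}). The only point requiring care is the bookkeeping identification of \emph{ergodicity classes} with \emph{ergodic classes}, which is legitimate precisely because class-ergodicity is guaranteed here by the Class $\mathcal{P}^*$ hypothesis; without that hypothesis the upper bound of Corollary \ref{upper bound ergodic} and the lower bound of Corollary \ref{lower-1} need not meet, so this corollary is genuinely special to Class $\mathcal{P}^*$.
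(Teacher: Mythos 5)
Your proposal is correct and is essentially the paper's own argument: the paper likewise sandwiches $\Rank(A)$ between the upper bound of Corollary \ref{upper bound ergodic} and the lower bound of Corollary \ref{lower-1}, using Theorem \ref{cont} to show the two coincide for Class $\mathcal{P}^*$ chains. Your explicit remark that class-ergodicity is what lets you identify ergodicity classes with ergodic classes is a point the paper leaves implicit, but it is the same proof.
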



\section{Full-Rank chains}
\label{full-rank chains}

 One can characterize chains with maximum possible $\text{rank}$ as the following.
\begin{theorem}
\label{full-rank thm}
  A chain $\{ A(t) \}$ is \textit{full-rank}, i.e., $\Rank(A)=N$ if and only if $\{ A(t) \}$ is an $l_1$-approximation of the neutral chain, i.e., the chain of matrix $\textbf{0}_{N \times N}$.
\end{theorem}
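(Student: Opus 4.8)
The plan is to prove both implications simultaneously through the geometric framework of Section~\ref{a geometric framework}, by pinning down the $(N-1)$-dimensional volume of the limiting polytope $C_0$. Writing $\|\cdot\|$ for the max-norm, note first that because every $A(t)$ has zero row sums and non-negative off-diagonal entries one has $\|A(t)\| = \max_i |a_{ii}(t)|$, which lies between $\tfrac1N\sum_i|a_{ii}(t)|$ and $\sum_i|a_{ii}(t)|$; hence ``$\{A(t)\}$ is an $l_1$-approximation of $\mathbf 0_{N\times N}$'' is the same condition as $\int_0^\infty \sum_i |a_{ii}(t)|\,dt < \infty$, and the theorem reduces to the equivalence
\[
  \Rank(A) = N \quad\Longleftrightarrow\quad \int_0^\infty \textstyle\sum_i |a_{ii}(t)|\,dt < \infty .
\]

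For the left-hand side I would use Theorem~\ref{rank=c} and Lemma~\ref{independence}: $\Rank(A)=c$, the number of vertices of $C_0$, and those $c$ vertices are linearly --- hence affinely --- independent, so $C_0$ is a $(c-1)$-simplex inside the standard simplex $C_{0,0}$, both living in the hyperplane $H = \{x\in\mathbb R^N : \mathbf 1_N' x = 1\}$. Since each $C_{t,\tau}$ has at most $N$ vertices, $\Rank(A)=N$ holds exactly when $C_0$ is full-dimensional in $H$, i.e.\ when $\mathrm{Vol}_{N-1}(C_0) > 0$. To evaluate this volume I would, next, apply Jacobi's formula to $\dot\Phi(t,0) = A(t)\Phi(t,0)$, giving $\det\Phi(t,0) = \exp\!\big(\int_0^t \mathrm{tr}\,A(\sigma)\,d\sigma\big) = \exp\!\big(-\int_0^t \sum_i |a_{ii}(\sigma)|\,d\sigma\big)$, a quantity that decreases to a strictly positive limit iff $\int_0^\infty \sum_i|a_{ii}| < \infty$. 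The bridge between the two is the identity $\mathrm{Vol}_{N-1}(C_{t,0}) = \mathrm{Vol}_{N-1}(C_{0,0})\,|\det\Phi(t,0)|$: the map $v \mapsto \Phi(t,0)^\top v$ carries the $i$-th standard basis vector to the $i$-th row of $\Phi(t,0)$, hence maps $C_{0,0}$ onto $C_{t,0}$; it leaves $H$ invariant, and since $\Phi(t,0)\mathbf 1_N = \mathbf 1_N$ it acts as the identity on $\mathbb R^N/\{\mathbf 1_N'x=0\}$, so its restriction to $H$ is an affine bijection whose $(N-1)$-volume scaling factor is precisely $|\det\Phi(t,0)|$. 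Letting $t\to\infty$, and using that the $C_{t,0}$ form a monotone decreasing sequence of polytopes with at most $N$ vertices converging to $C_0$ (Lemma~\ref{convex-hull} and the discussion after it), yields $\mathrm{Vol}_{N-1}(C_0) = \mathrm{Vol}_{N-1}(C_{0,0})\exp\!\big(-\int_0^\infty \sum_i |a_{ii}|\big)$, which is positive iff $\int_0^\infty \sum_i|a_{ii}| < \infty$.

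Chaining the two equivalences then gives the theorem: $\Rank(A)=N$ holds iff $\mathrm{Vol}_{N-1}(C_0) > 0$, iff $\int_0^\infty \sum_i|a_{ii}(t)|\,dt < \infty$, iff $\{A(t)\}$ is an $l_1$-approximation of $\mathbf 0_{N\times N}$. (The ``if'' direction alone can also be obtained directly from Lemma~\ref{rank and approximation} together with the trivial observation that the neutral chain has $\Phi\equiv I_{N\times N}$ and hence $\Rank = N$; the volume computation is what is really needed for ``only if''.)

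I expect the main obstacle to be the volume identity $\mathrm{Vol}_{N-1}(C_{t,0}) = \mathrm{Vol}_{N-1}(C_{0,0})\,|\det\Phi(t,0)|$. It requires a small but careful linear-algebra lemma: a linear map of $\mathbb R^N$ that leaves invariant a hyperplane $H$ not through the origin and that induces the identity on the complementary one-dimensional quotient scales $(N-1)$-dimensional Lebesgue measure on $H$ by exactly its full determinant. One must also record the elementary facts that a polytope with positive $(N-1)$-volume whose affine hull has dimension $N-1$ has at least $N$ vertices, while every $C_{t,\tau}$ has at most $N$, so that the limiting vertex count $c=\Rank(A)$ falls below $N$ exactly when $\mathrm{Vol}_{N-1}(C_0)$ vanishes.
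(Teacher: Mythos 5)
Your proof is correct, but for the necessity direction it takes a genuinely different route from the paper. The paper's proof of ``only if'' is structural: from $c=N$ and Lemma \ref{independence} it deduces that $\Phi'(t,0)$ converges to a matrix with linearly independent stochastic columns, hence that $\Phi(t,\tau)$ is close to $I_{N\times N}$ for all $t\geq\tau\geq T$; it then truncates the chain on $[0,T)$ to land in Class $\mathcal{P}^*$ and invokes Theorem \ref{cont} (imported from \cite{Bolouki:14a}) to force the infinite flow graph to have $N$ components. Your argument replaces all of that with Liouville's formula, $\det\Phi(t,0)=\exp\bigl(\int_0^t\operatorname{tr}A\bigr)=\exp\bigl(-\int_0^t\sum_i|a_{ii}|\bigr)$, plus the observation that $\Phi'(t,0)$ preserves the affine hyperplane $\{\mathbf{1}_N'x=1\}$ and acts trivially on the quotient $\mathbb{R}^N/\{\mathbf{1}_N'x=0\}$, so that $\mathrm{Vol}_{N-1}(C_{t,0})=\mathrm{Vol}_{N-1}(C_{0,0})\,|\det\Phi(t,0)|$; continuity from above of Lebesgue measure on the nested polytopes, together with Theorem \ref{rank=c} and Lemma \ref{independence} (which identify $\Rank(A)=N$ with $C_0$ being a nondegenerate $(N-1)$-simplex), then closes the loop. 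The steps you flag as delicate are all sound: the max-norm of $A(t)$ is indeed $\max_i|a_{ii}(t)|$ under the sign constraints, and the volume-scaling factor of an invertible map preserving a hyperplane parallel to $\ker\mathbf{1}_N'$ and inducing the identity on the quotient is exactly $|\det|$. What your approach buys is self-containedness (no appeal to Class $\mathcal{P}^*$ or to the external class-ergodicity theorem) and the quantitative by-product $\mathrm{Vol}_{N-1}(C_0)=\mathrm{Vol}_{N-1}(C_{0,0})\exp\bigl(-\int_0^\infty\sum_i|a_{ii}|\bigr)$; what the paper's route buys is the intermediate fact that $\Phi(t,\tau)\to I_{N\times N}$ uniformly for large $\tau$ and a template that transfers to the discrete-time analogue (Theorem \ref{full-rank d}), where the determinant of $\Phi(t,\tau)$ can vanish and your volume computation would need to be reworked around the characterization $|\det A|=1$ iff $A$ is a permutation matrix.
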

\begin{proof}
  The sufficiency is immediately implied using Lemma \ref{rank and approximation} and taking into account that the neutral chain is full-rank. To prove the necessity, assume that $\{A(t)\}$ is full-rank. We may now once again take advantage of our geometric framework developed in Section \ref{a geometric framework} based on the associated state transition matrix. Recall that $c$ is defined by the number of vertices of limiting polytope $C_{\tau}$ for an arbitrary $\tau \geq 0$. Since $\Rank(A) = c$, we conclude that $c=N$. Letting $v_1,\ldots,v_N$ be the $N$ vertices of $C_0$, for a permutation $\sigma$ over $\{1,\ldots,N\}$, we must have:
  \begin{equation}
    \lim_{t \rightarrow \infty} \Phi'(t,0) = \begin{bmatrix} v_{\sigma(1)} | \cdots | v_{\sigma(N)} \end{bmatrix},
    \label{fulls}
  \end{equation}
  since each column of $\Phi'(t,0)$ is a continuous function of $t$ such that its distance from $\{ v_1,\ldots,v_N \}$ vanishes as $t$ grows large. Recalling:
  \begin{equation}
    \Phi(t,0) = \Phi(t,\tau) \Phi(\tau,0), \, \forall t \geq \tau \geq 0,
  \end{equation}
  and taking into account that, based on Lemma \ref{independence}, the columns of the RHS of relation (\ref{fulls}) are linearly independent stochastic vectors, for a sufficiently large $T \geq 0$, $\Phi(t,\tau)$ is arbitrarily close to the $N \times N$ identity matrix for every $t \geq \tau \geq T$. In particular, $\Phi(t,\tau)$ has positive diagonal elements (well away from zero) for every $t \geq \tau \geq T$. Form chain $\{B(t)\}$ from $\{A(t)\}$ by eliminating all interactions between individuals over time interval $[0,T)$. Then, the state transition matrix associated with chain $\{ B(t) \}$ has positive diagonal elements all the times. Recalling Lemma \ref{pstar}, we conclude that chain $\{ B(t) \}$ is in Class $\mathcal{P}^*$. On the other hand, chain $\{ B(t) \}$ is an $l_1$-approximation of chain $\{ A(t) \}$ due to boundedness of interactions over time interval $[0,T)$. Consequently, $\Rank(B) = \Rank(A) = N$. Theorem \ref{cont} now implies that $\Rank(B)=N$ is the number of connected components of the infinite flow graph associated with chain $\{B(t)\}$. This completes the proof since the two chains share the same infinite flow graph.
\end{proof}
Assume that the infinite flow graph of chain $\{ A(t) \}$, i.e., $\h_2(\V,\e_2)$, has $h_2$ connected components. Form chain $\{B(t)\}$, which is an $l_1$-approximation of $\{ A(t) \}$ by eliminating all interactions between distinct connected components. Since the subchain corresponding to each connected component is full-rank if and only if it contains a single node, the following proposition follows from Lemma \ref{rank and approximation}, that provides an upper bound for $\Rank(A)$.
\begin{proposition}
\label{upper-2}
  Let $\{A(t)\}$ be a time-varying chain with infinite flow graph $\h_2$. Then:
  \begin{equation}
    \Rank(A) \leq N-h'_2,
  \end{equation}
  where $h'_2$ is the number of connected components of $\h_2$ containing two or more nodes.
\end{proposition}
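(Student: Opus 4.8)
The plan is to use the $l_1$-approximation invariance of the $\text{rank}$ to pass to a block-diagonal chain whose diagonal blocks are indexed by the connected components of $\h_2$, and then to bound the rank of each block. First, I would form $\{B(t)\}$ from $\{A(t)\}$ exactly as suggested just before the statement: for $i \ne j$ set $b_{ij}(t) = a_{ij}(t)$ when $i,j$ lie in the same connected component of $\h_2$, set $b_{ij}(t) = 0$ when $i,j$ lie in different components, and set $b_{ii}(t) = -\sum_{j \ne i} b_{ij}(t)$. Every pair $(i,j)$ that is zeroed out satisfies $\int_0^\infty (a_{ij}(t) + a_{ji}(t))\,dt < \infty$ by the definition of $\h_2$, so $\int_0^\infty \|A(t) - B(t)\|\,dt < \infty$; hence $\{B(t)\}$ is an $l_1$-approximation of $\{A(t)\}$ and $\Rank(B) = \Rank(A)$ by Lemma \ref{rank and approximation}. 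By construction the two chains share the infinite flow graph $\h_2$, and the sub-chain $\{B^{(k)}(t)\}$ of $\{B(t)\}$ supported on the $k$-th component $V_k$ has infinite flow graph equal to the induced subgraph $\h_2[V_k]$, which is connected.

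Next, after relabelling the individuals so that each $V_k$ occupies a contiguous block of indices, every $B(t)$ is block-diagonal, hence so is the state transition matrix $\Phi_B(t,\tau)$, its $k$-th diagonal block being $\Phi_{B^{(k)}}(t,\tau)$. This yields the direct-sum decomposition $\Null_\tau(B) = \bigoplus_k \Null_\tau(B^{(k)})$, and therefore $\Rank(B) = \sum_k \Rank(B^{(k)})$. For a component $V_k$ with a single node the sub-chain is the scalar chain $\{0\}$, with state transition matrix the scalar $1$, so $\Rank(B^{(k)}) = 1 = |V_k|$. For a component with $|V_k| \ge 2$ I would argue that $\{B^{(k)}(t)\}$ cannot be full-rank: if it were, Theorem \ref{full-rank thm} would force $\int_0^\infty \|B^{(k)}(t)\|\,dt < \infty$, whence $\int_0^\infty b^{(k)}_{ij}(t)\,dt < \infty$ for every off-diagonal entry, so the infinite flow graph of $\{B^{(k)}(t)\}$ would be edgeless --- contradicting that this graph is $\h_2[V_k]$, connected on at least two vertices. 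Hence $\Rank(B^{(k)}) \le |V_k| - 1$ for every multi-node component.

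Summing over all components and using $\sum_k |V_k| = N$ gives $\Rank(A) = \Rank(B) = \sum_k \Rank(B^{(k)}) \le \sum_{|V_k|=1} 1 + \sum_{|V_k| \ge 2} (|V_k| - 1) = N - h'_2$, since exactly the $h'_2$ multi-node components each contribute one less than their size. The routine-but-essential points are the direct-sum decomposition of $\Null_\tau(B)$ --- which is precisely why the passage to the block-diagonal $l_1$-approximation is needed --- and the identification of the infinite flow graph of each sub-chain with $\h_2[V_k]$; both are immediate from the construction of $\{B(t)\}$. The only genuinely non-trivial ingredients are the characterization of full-rank chains (Theorem \ref{full-rank thm}) and Lemma \ref{rank and approximation}, both already available, so no new estimates are required; the main obstacle is simply organizing the block bookkeeping correctly.
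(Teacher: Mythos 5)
Your proposal is correct and follows essentially the same route as the paper: pass to the block-diagonal $l_1$-approximation $\{B(t)\}$ obtained by cutting all cross-component interactions, invoke Lemma \ref{rank and approximation}, and use Theorem \ref{full-rank thm} to conclude that a component's sub-chain is full-rank only if the component is a singleton, so each multi-node component loses at least one from its rank. You merely spell out the bookkeeping (the direct-sum decomposition of $\Null_\tau(B)$ and the additivity of the rank over blocks) that the paper leaves implicit.
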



\section{Discrete Time Analysis}
\label{discrete time analysis}

	In this section, we turn our attention to the case in which the opinions of the individuals are updated at discrete time instants. Our aim is to characterize EGC's in a network for the discrete time case. To this aim, we adopt, with a slight modification, the same approach followed in the continuous time case, i.e., an approach based on the notion of rank. After we define the $\text{rank}$ of a discrete time chain, we carry out the discrete time counterpart of our statements in Sections \ref{problem setup}--\ref{full-rank chains}.

	Remember that in this section, time variables $t,\tau,t_0$, etc. refer to the discrete time indices. Let $\{A(t)\}_{t \geq 0}$ be a time-varying chain of row-stochastic square matrices of size $N$. A row-stochastic matrix, or simply stochastic matrix, is a matrix with non-negative elements and the property that its each row elements sum up to 1 . Discrete time chains of matrices, that we deal with in this paper, are assumed to be chains of stochastic matrices. Indeed, $A(t)$ can be viewed as the transition matrices of a time inhomogeneous Markov chain. Let dynamics of an opinion network be described by the following discrete time distributed averaging algorithm:
	\begin{equation}
		x(t+1) = A(t) x(t), \, t \geq t_0,
	\label{md}
	\end{equation}
where $t_0 \geq 0$ is the initial time, $x(t) \in \mathbb{R}^N$ is the vector of opinions at each time instant $t \geq t_0$, and chain $\{A(t)\}_{t \geq 0}$, or simply $\{A(t)\}$, is the underlying chain of the network.

	The notion of EGC in a network of individuals with discrete time dynamics (\ref{md}) is defined consistently with Definition \ref{EGC def}. More specifically, for an opinion network with dynamics (\ref{md}), an EGC refers to a subgroup of individuals who are able to lead the whole group to asymptotically agreement on any desired value by cooperatively and properly choosing their own initial opinions, based on an awareness of underlying chain $\{A(t)\}$ as well as the initial opinions of the rest of individuals. Notice that Lemma \ref{zero is enough}, with a similar proof, also holds for a network with dynamics (\ref{md}). In the following, by extending the notions of null space, nullity, and rank to discrete time chains, we exploit the relationship between the characterization of an EGC in a network, size of the smallest EGC, and properties of the underlying chain of the network.

	For the sake of notational consistency, let $\Phi(t,\tau)$, $t \geq \tau \geq 0$, be the state transition matrix associated with discrete time chain $\{A(t)\}$. State transition matrix $\Phi(t,\tau)$ satisfies relation (\ref{mgeneral}). we also have:
\begin{equation}
\label{phi discrete}
  \Phi(t,\tau) = A(t-1) \cdots A(\tau), \, \forall t > \tau \geq 0,
\end{equation}
and $\Phi(t,t) = I_{N \times N}$, $\forall t \geq 0$. Define the null space of discrete time chain $\{ A(t) \}$ at an arbitrary time instant $\tau \geq 0$, $\Null_{\tau}(A)$, consistently with its continuous time version, i.e., Definition \ref{null space}. $\Null_{\tau}(A)$, $\tau \geq 0$, is again a vector space. However, since the state transition matrix in the discrete time case may be singular at times, unlike the continuous time case, the dimension of $\Null_{\tau}(A)$, denoted by $\dim (\Null_{\tau}(A))$, can vary as $\tau$ grows. However, it is not difficult to show that $\dim (\Null_{\tau}(A))$ is non-increasing with respect to $\tau$. We now have the following theorem on the size of the smallest EGC of a network with dynamics (\ref{md}). The proof is eliminated as it is similar to the proof of Theorem \ref{rank=EGC}.

\begin{theorem}
\label{EGC D}
	For an opinion network with dynamics (\ref{md}), the size of the smallest EGC is $N - \dim (\Null_{t_0}(A))$.
\end{theorem}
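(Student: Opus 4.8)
The plan is to mirror the proof of Theorem~\ref{rank=EGC}, with $N-\dim(\Null_{t_0}(A))$ playing the role that $\Rank(A)$ played there; the only genuine adjustment is that one works directly with the nullity at the fixed initial time $t_0$, since in the discrete-time case $\dim(\Null_\tau(A))$ may actually depend on $\tau$. Write $d \triangleq \dim(\Null_{t_0}(A))$ and let $h$ denote the size of the smallest EGC of the network with dynamics (\ref{md}). I would prove $h \le N-d$ and $N-d \le h$ separately. Throughout, I use the discrete-time analogue of Lemma~\ref{zero is enough} (valid with the same proof, since each $\Phi(t,t_0)$ is row-stochastic and hence fixes $\mathbf{1}_N$, so translations are preserved), together with the observation that, because $x(t)=\Phi(t,t_0)x(t_0)$, one has $\lim_{t\to\infty}x(t)=\mathbf{0}_N$ if and only if $x(t_0)\in\Null_{t_0}(A)$.

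For $h \le N-d$, I would exhibit an EGC of size $N-d$. By Lemma~\ref{zero is enough} it suffices to produce $\s\subset\V$ with $|\s|=N-d$ such that for every initialization of the opinions of individuals in $\V\backslash\s$ there is an initialization of the opinions of individuals in $\s$ placing $x(t_0)$ in $\Null_{t_0}(A)$. Fix a basis $\beta_1,\dots,\beta_d$ of $\Null_{t_0}(A)$ and let $B=[\beta_1|\cdots|\beta_d]$, an $N\times d$ matrix of column rank $d$, hence also of row rank $d$; pick $d$ linearly independent rows, indexed by $i_1,\dots,i_d$, and set $\s=\V\backslash\{i_1,\dots,i_d\}$. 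Given arbitrary prescribed values on the coordinates $i_1,\dots,i_d$, the $d\times d$ submatrix of $B$ on those rows is invertible, so there is a unique $\alpha\in\mathbb{R}^d$ with $B\alpha$ matching the prescribed values on $\{i_1,\dots,i_d\}$; then $x(t_0)=B\alpha\in\Null_{t_0}(A)$ realizes those values on $\V\backslash\s$, so $\s$ is an EGC.

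For $N-d \le h$, I would fix an EGC $\s$ of size $h$. By definition and Lemma~\ref{zero is enough}, for every choice of opinions on $\V\backslash\s$ there is a choice on $\s$ with $x(t_0)\in\Null_{t_0}(A)$; equivalently, the coordinate projection of $\Null_{t_0}(A)$ onto the $N-h$ coordinates indexed by $\V\backslash\s$ is surjective onto $\mathbb{R}^{N-h}$. Hence $\dim\Null_{t_0}(A)\ge N-h$, i.e.\ $d\ge N-h$, which is $N-d\le h$. Combining the two inequalities gives $h=N-d$.

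I do not expect a real obstacle here: the argument is the continuous-time one verbatim, and it is worth noting that nothing in it requires $\Phi(t,\tau)$ to be invertible, so it survives the loss of invertibility that distinguishes the discrete-time setting (the monotonicity of $\dim(\Null_\tau(A))$ in $\tau$ plays no role, precisely because the statement is phrased in terms of $\Null_{t_0}(A)$). The one point deserving a line of care is simply to record that the discrete-time version of Lemma~\ref{zero is enough} is available, which it is by the same translation-invariance observation noted above.
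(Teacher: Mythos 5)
Your proof is correct and follows exactly the route the paper intends: the paper omits the proof of Theorem~\ref{EGC D} precisely because it is the argument of Theorem~\ref{rank=EGC} transplanted to the fixed initial time $t_0$, which is what you carry out, including the two observations that matter (the discrete-time validity of Lemma~\ref{zero is enough} via row-stochasticity of $\Phi(t,t_0)$, and the fact that invertibility of the state transition matrix is never used once the statement is pinned to $\Null_{t_0}(A)$). Nothing further is needed.
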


Since $\dim (\Null_{\tau}(A))$ is non-increasing with respect to $\tau$, from Theorem \ref{EGC D}, we conclude that initializing the network with dynamics (\ref{md}) at a later time results in a greater or equal size of its smallest EGC. Notice now that $\dim (\Null_{t_0}(A))$ is an integer-valued operator bounded below by zero. Thus, $\dim (\Null_{\tau}(A))$ becomes constant after a \textit{finite} time. Define the nullity of chain $\{A(t)\}$, $\Nullity(A)$, by that constant:
\begin{equation}
  \Nullity(A) \triangleq \lim_{\tau \rightarrow \infty} \dim (\Null_{\tau}(A)).
  \label{lim null}
\end{equation}
Define now the rank of chain $\{A(t)\}$, $\Rank(A)$, as in continuous time, by $\Rank(A) = N - \text{null}(A)$. The following corollary, to be viewed as the discrete time counterpart of Theorem \ref{rank=EGC}, is an immediate result of Theorem \ref{EGC D} and the definition of $\Rank(A)$.

\begin{corollary}
If a network with dynamics (\ref{md}) is initialized at a sufficiently large time, the size of its smallest EGC is $\Rank(A)$, where a sufficiently large time refers to some time after the RHS of (\ref{lim null}) has converged.
\end{corollary}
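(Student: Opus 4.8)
The plan is to combine Theorem \ref{EGC D} with the observation, already recorded in the paragraph preceding (\ref{lim null}), that the map $\tau \mapsto \dim(\Null_\tau(A))$ is non-increasing, integer-valued, and bounded below by $0$, hence eventually constant and equal to $\Nullity(A)$ after some finite time. This is what makes the phrase ``sufficiently large time'' meaningful, and it is the only ingredient that is not a pure substitution.

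First I would fix a time $\tau^{*} \geq 0$ past which the sequence $\{\dim(\Null_\tau(A))\}_{\tau \geq 0}$ has already reached its limiting value; such a $\tau^{*}$ exists precisely because a non-increasing sequence of non-negative integers stabilizes in finitely many steps. Then I would take any initial time $t_0 \geq \tau^{*}$, so that, by the definition of $\Nullity(A)$ in (\ref{lim null}), we have $\dim(\Null_{t_0}(A)) = \Nullity(A)$.

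Next I would invoke Theorem \ref{EGC D}: for the network with dynamics (\ref{md}) initialized at $t_0$, the size of the smallest EGC equals $N - \dim(\Null_{t_0}(A))$. Substituting the identity from the previous step yields $N - \Nullity(A)$, which is exactly $\Rank(A)$ by the definition $\Rank(A) = N - \Nullity(A)$ introduced just above the statement. This finishes the proof.

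There is essentially no obstacle here: the only non-routine point — that $\dim(\Null_\tau(A))$ becomes constant after finitely many steps, not merely in the limit — has already been settled via monotonicity and integrality, so the corollary is indeed an immediate consequence of Theorem \ref{EGC D} together with the definitions of $\Nullity(A)$ and $\Rank(A)$.
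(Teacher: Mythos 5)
Your proof is correct and follows exactly the route the paper takes: the corollary is stated there as an immediate consequence of Theorem \ref{EGC D} together with the stabilization of $\dim(\Null_\tau(A))$ and the definitions of $\Nullity(A)$ and $\Rank(A)$. You merely spell out the substitution in more detail than the paper does.
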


In the rest of this section, we focus on the notion of rank of a chain. We recall the definition of $l_1$-approximation of a discrete time chain from \cite{Touri:10b}.

\begin{definition}
  Chain $\{A(t)\}$ is said to be an \textit{$l_1$-approximation} of chain $\{B(t)\}$ if:
  \begin{equation}
    \sum_{t=0}^{\infty} \| A(t)-B(t) \| < \infty,
  \end{equation}
  where for convenience only, the norm refers to the \textit{max norm}, i.e., the maximum of the absolute values of the matrix elements.
\end{definition}

It can be shown that, $\text{rank}$, as we defined it for the discrete time case, is invariant under an $l_1$-approximation, i.e., Lemma \ref{rank and approximation} holds for the discrete time case as well.


\subsection{Rank via Sonin Decomposition-Separation Theorem}

We aim to address in this subsection, the rank of a discrete time chain of stochastic matrices via an approach based on the Sonin D-S Theorem \cite{Sonin:08,Bol:13}. Some preliminaries are required first. According to \cite{Blackwell:45} as reported in \cite{Sonin:08}, the definition of jet will be recalled. It plays a crucial role in our discrete time arguments.

\begin{definition}
  Given the set of individuals $\V = \{1,\ldots,N\}$, a \textit{jet} $J$ in $\V$ is a sequence $\{ J(t) \}$ of subsets of $\V$. A jet $J$ in $\V$ is called a \textit{proper} jet if $\emptyset \neq J(t) \subsetneq \V$, $\forall t \geq 0$. Complement of jet $J=\{J(t)\}$ in $\V$, denoted by $\bar{J}$ is also a jet in $\V$ expressed by sequence $\{ \V \backslash J(t) \}$. For a fixed subset $S \subset \V$, jet $S$ refers to a jet which is equal to $S$ at all time instants.
\label{jet-def}
\end{definition}

\begin{definition}
  A tuple of jets $(J^1,\ldots,J^c)$ is a \textit{jet-partition} of $\V$, if $(J^1(t),\ldots,J^c(t))$ forms a partition of $\V$ for every $t \geq 0$.
\end{definition}

Consider a multi-agent system with states evolving according to linear algorithm (\ref{md}). Based on the work \cite{Kolmo:36}, we know that discrete time chain $\{ A(t) \}$ admits an absolute probability sequence $\{\pi(t)\}$ which propagates backwards in time:
\begin{equation}
  \pi'(t+1)A(t) = \pi'(t), \forall t \geq 0.
\end{equation}
From chain $\{ A(t) \}$, construct chain $\{ P(t) \}$ of stochastic matrices satisfying:
\begin{equation}
  \pi_i(t) p_{ij}(t) = \pi_j(t+1) a_{ji}(t), \forall i,j \in \V, \forall t \geq 0.
\end{equation}
More specifically, if $\pi_i(t) \neq 0$, then set:
\begin{equation}
  p_{ij}(t) = \pi_j(t+1) a_{ji}(t) / \pi_i(t),
\end{equation}
while if $\pi_i(t) = 0$ for some $i \in \V$ and $t \geq 0$, choose non-negative $p_{ij}(t)$'s arbitrarily such that:
\begin{equation}
  \sum_{j=1}^N p_{ij}(t) = 1.
\label{sto}
\end{equation}
Note that in the former case ($\pi_i(t) \neq 0$), (\ref{sto}) is automatically satisfied, implying that $P(t)$ is a stochastic matrix for every $t \geq 0$. It is easy to see that:
\begin{equation}
  \pi'(t) P(t) = \pi'(t+1), \forall t \geq 0,
\end{equation}
indicating that $\{ \pi(t) \}$ can now be viewed as a non homogeneous \textit{forward} propagating Markov chain.
\begin{definition}
  Let the \textit{total flow} between two arbitrary jets $J^s$ and $J^k$ in $\V$ over the infinite time interval, denoted by $V(J^s,J^k)$, be defined as:
\begin{equation}
  V(J^s,J^k) \triangleq \sum_{t=0}^{\infty} \left[\sum_{i \in J^k(t)}\sum_{j \in J^s(t+1)} r_{ij}(t) + \sum_{i \in J^s(t)}\sum_{j \in J^k(t+1)} r_{ij}(t) \right],
\label{V}
\end{equation}
where
\begin{equation}
  r_{ij}(t) = \pi_i(t)p_{ij}(t) = \pi_j(t+1)a_{ji}(t).
  \label{r-p-a}
\end{equation}
\end{definition}

From a Markov chain point of view, value $r_{ij}(t)$ can be interpreted as the absolute joint probability of being in state $i$ at time $t$ and state $j$ at time $t+1$.

\begin{theorem} \textit{(Sonin D-S Theorem)}
  There exists an integer $c$, $1 \leq c \leq N$, and a decomposition of $\V$ into jet-partition $(J^0,J^1,\ldots,J^c)$, $J^k = \{ J^k(t) \}$, such that irrespective of the particular time or values at which $x_i$'s are initialized,
  \begin{enumerate}[(i)]
    \item For every $k$, $1 \leq k \leq c$, there exist real constants $\pi^*_k$ and $x^*_k$, such that:
	\begin{equation}
	  \lim_{t \rightarrow \infty} \sum_{i \in J^k(t)} \pi_i(t)=\pi^*_k,
	\end{equation}
	and:
	\begin{equation}
	  \lim_{t \rightarrow \infty} x_{i_t}(t)=x^*_k,
	\end{equation}
	for every sequence $\{ i_t \}$, $i_t \in J^k(t)$. Furthermore, $\lim_{t \rightarrow \infty}\sum_{i \in J^0(t)} \pi_i(t) = 0$.
    \item For every distinct $k,s$, $0 \leq k,s \leq c$: $V(J^k,J^s) < \infty$.
    \item This decomposition is unique up to jets $\{ J(t) \}$ such that for any $\{ \pi(t) \}$ we have:
	\begin{equation}
	  \lim_{t \rightarrow \infty}\sum_{i \in J(t)} \pi_i(t) = 0,
	\end{equation}
      and:
      \begin{equation}
        V(J,\V\backslash J) < \infty.
      \end{equation}
  \end{enumerate}
\end{theorem}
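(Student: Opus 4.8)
The plan is to carry out the whole construction inside the \emph{forward}-propagating Markov chain whose one-step joint masses are the $r_{ij}(t)$ of (\ref{r-p-a}), so that the jet-partition depends only on the pair $(\{\pi(t)\},\{P(t)\})$ and not on how the $x_i$'s are initialized. Call a jet $J=\{J(t)\}$ \emph{settled} if $\mu(J)\triangleq\lim_{t\to\infty}\sum_{i\in J(t)}\pi_i(t)$ exists and the boundary flow $V(J,\bar J)$ of (\ref{V}) is finite; intuitively $J$ is settled when the forward chain transfers only a summable amount of mass across the boundary of $J$, so that mass confined to $J$ (resp.\ to $\bar J$) stays there asymptotically. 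The first structural fact I would establish is that settled jets form a lattice under the pointwise operations $(J_1\wedge J_2)(t)=J_1(t)\cap J_2(t)$ and $(J_1\vee J_2)(t)=J_1(t)\cup J_2(t)$: every transition crossing the boundary of $J_1\cap J_2$ or of $J_1\cup J_2$ crosses the boundary of $J_1$ or of $J_2$, which gives $V(J_1\wedge J_2,\overline{J_1\wedge J_2})+V(J_1\vee J_2,\overline{J_1\vee J_2})\le V(J_1,\bar J_1)+V(J_2,\bar J_2)<\infty$, while $\sum_{i\in J_1(t)\cap J_2(t)}\pi_i(t)+\sum_{i\in J_1(t)\cup J_2(t)}\pi_i(t)=\sum_{i\in J_1(t)}\pi_i(t)+\sum_{i\in J_2(t)}\pi_i(t)$ forces the two remaining mass limits to exist.

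Second, I would run an extraction procedure on settled jets of positive mass, keeping those that are \emph{minimal}, i.e.\ that contain no settled sub-jet of strictly smaller positive mass. Existence of minimal jets is where compactness enters: along a sequence of settled jets with decreasing $\mu$ one passes, via a diagonal subsequence of times, to a pointwise limiting jet, and a Fatou-type bound on $V$ shows this limit is again settled with $\mu$ equal to the infimum. By the lattice property two distinct minimal jets have intersection of zero mass, so they may be taken disjoint; since their positive masses add to at most $1$, and for all large $t$ the sets $J^k(t)$ are nonempty and pairwise disjoint subsets of $\V$, there are at most $N$ of them. Enumerate them $J^1,\dots,J^c$, set $J^0(t)=\V\backslash\bigcup_{k=1}^c J^k(t)$ and $\pi^*_k=\mu(J^k)$; then property (ii) is immediate because $V(J^k,J^s)\le V(J^k,\bar J^k)<\infty$ for distinct $k,s$, and the mass statements of (i) hold by construction.

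Third, I would show $\mu(J^0)=0$: otherwise the $J^0$-mass exceeds some $\delta>0$ along a subsequence of times, and the same diagonal-extraction argument produces from $J^0$ a nonempty settled sub-jet of positive mass which, by minimality and disjointness of the minimal jets, would have to coincide with some $J^k$, contradicting $J^k\subset\V\backslash J^0$. The analytically hardest step is the convergence of opinions: for fixed $k$ and any selection $i_t\in J^k(t)$ one wants $x_{i_t}(t)\to x^*_k$. Since $V(J^k,\bar J^k)<\infty$, beyond a large time $T$ only a tiny total mass of transitions leaves $J^k$, so the backward product $\Phi(t,T)$ restricted to the time-indexed block $J^k(\cdot)$ is, up to a perturbation that vanishes as $T\to\infty$, stochastic and has $\pi$-mass bounded away from $0$ on that block; a scrambling / Dobrushin-coefficient estimate then drives the oscillation $\max_{i,j\in J^k(t)}|x_i(t)-x_j(t)|$ to $0$, and because the $\pi$-weighted average $\left(\sum_{i\in J^k(t)}\pi_i(t)x_i(t)\right)\big/\left(\sum_{i\in J^k(t)}\pi_i(t)\right)$ changes by only a summable amount (cross-flow being finite) it is Cauchy, which pins the common limit $x^*_k$.

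Finally, uniqueness (iii) follows from the lattice structure once more: given any second such decomposition, intersect its jets pairwise with the $J^k$; each intersection is settled, the ones of positive mass must coincide since they are minimal on both sides, and what remains over is a jet of vanishing mass with finite flow to its complement --- exactly the asserted ambiguity. I expect the two real obstacles to be (a) making the localized weak-ergodicity estimate of the fourth step rigorous when the sets $J^k(t)$ genuinely move with $t$ and some $\pi_i(t)$ may vanish, and (b) controlling $V$ under pointwise-in-$t$ limits of jets along a subsequence, so that limits of settled jets remain settled; both are soft-analysis points that the finiteness of $\V$ and the summability built into ``settled'' should make tractable but that require care.
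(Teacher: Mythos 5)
A point of reference first: the paper does not prove this theorem at all --- it is quoted from the literature (\cite{Sonin:08}, going back to \cite{Blackwell:45}) and used as a black box to obtain Theorem \ref{rank jets}. So there is no in-paper proof to compare against; what follows assesses your sketch on its own terms. Your architecture is in fact the one behind Sonin's actual proof: work in the forward chain $\{P(t)\}$, consider the lattice of jets with finite boundary flow (Sonin's ``almost closed'' sequences of sets), which is closed under pointwise intersection and union by the subadditivity of $V$ you state, isolate the minimal positive-mass elements, and bound their number by $N$ because for large $t$ they are nonempty pairwise disjoint subsets of $\V$. The uniqueness argument via pairwise intersection is also the standard one.

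Two load-bearing steps are genuinely gapped, however. (a) In the extraction of minimal jets you pass to a pointwise-in-$t$ limit $J$ of settled jets $J_n$ with $\mu(J_n)$ decreasing to the infimum and assert that $\mu(J)$ equals that infimum. Fatou does give $V(J,\bar J)\le\liminf_n V(J_n,\bar J_n)<\infty$, so the limit jet is settled, but $\mu$ is itself a $t\to\infty$ limit and does not commute with pointwise convergence of jets: $J_n(t)$ can agree with a fixed set for all $t\le n$ and drift afterwards, so $\mu(J)$ can be anything. As stated, minimal jets have not been shown to exist. The repair is combinatorial rather than topological: if a settled jet $J$ of positive mass is not an atom, it contains a settled sub-jet $J'$ with $0<\mu(J')<\mu(J)$, and then $J'$ and the pointwise difference $J\backslash J'$ are both settled with positive mass; iterating, the number of pairwise disjoint positive-mass settled jets is bounded by $N$ (disjoint nonempty subsets of $\V$ at large $t$), so the splitting terminates. (b) For the opinion convergence, finiteness of $V(J^k,\bar J^k)$ controls $\sum_t \pi_j(t+1)a_{ji}(t)$ across the boundary, not $\sum_t a_{ji}(t)$; for individuals with small $\pi_i(t)$ the raw cross-boundary influence need not be summable, so the Dobrushin/scrambling estimate on the block $J^k(\cdot)$ of $\Phi(t,T)$ is not available in the form you invoke. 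The known route is a backward-martingale argument in the $\pi$-weighted chain, which is precisely why the theorem is phrased in terms of $r_{ij}(t)=\pi_j(t+1)a_{ji}(t)$ and tolerates a vanishing-mass jet $J^0$. You flag both difficulties yourself, but they are the substance of the theorem, so the proposal is an accurate roadmap rather than a proof.
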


\begin{theorem}
\label{rank jets}
  The unique jet decomposition of $\V$ with respect to chain $\{ A(t) \}$ in the Sonin D-S Theorem, consists of jet $J^0$ and $\Rank(A)$ other jets.
\end{theorem}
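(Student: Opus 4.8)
The plan is to identify Sonin's integer $c$ with $\Rank(A)=N-\Nullity(A)$; equivalently, since by definition $\Nullity(A)=\lim_{\tau\to\infty}\dim(\Null_\tau(A))$, it suffices to exhibit one initial time $t_0$ large enough that $\dim(\Null_{t_0}(A))$ has already reached its limiting value and to show $\dim(\Null_{t_0}(A))=N-c$. The idea is to realize $\Null_{t_0}(A)$ as the kernel of an explicit linear map extracted from the jet-partition. By part (i) of the Sonin D-S Theorem, for every $v\in\mathbb{R}^N$, every $k$ with $1\le k\le c$, and every sequence $\{i_t\}$ with $i_t\in J^k(t)$, we have $(\Phi(t,t_0)v)_{i_t}=x_{i_t}(t)\to x^*_k(v)$, and $x^*_k(\cdot)$ is linear in $v$; specializing $v=e_j$ shows $\Phi_{i_t,j}(t,t_0)\to\xi^k_j$ for a stochastic vector $\xi^k$ that does not depend on the sequence, with the convergence uniform over $i\in J^k(t)$. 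Hence $x^*_k(v)=(\xi^k)'v$, and I define the linear map $L:\mathbb{R}^N\to\mathbb{R}^c$ by $L(v)=\big((\xi^1)'v,\dots,(\xi^c)'v\big)$.

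Next I would prove $\Null_{t_0}(A)=\ker L$. The inclusion $\subseteq$ is immediate: if $\Phi(t,t_0)v\to\mathbf 0_N$ then every coordinate, in particular every $J^k$-tracking coordinate, tends to $0$, so $x^*_k(v)=0$ for all $k$ and $Lv=0$. For the reverse inclusion, suppose $x^*_k(v)=0$ for $k=1,\dots,c$. By the uniform convergence above, the coordinates of $\Phi(t,t_0)v$ carried by $\bigcup_{k\ge1}J^k(t)$ tend to $0$, so only the coordinates carried by the transient jet $J^0(t)$ remain to be controlled. Here I would use that $J^0$ has vanishing limiting $\pi$-mass together with $V(J^0,J^k)<\infty$ for all $k$ (Sonin (i)--(ii)): this makes $J^0$ transient for the reversed chain $\{P(t)\}$, so that the rows of $\Phi(t,t_0)$ indexed by $J^0(t)$ converge into $\mathrm{conv}\{\xi^1,\dots,\xi^c\}$; consequently the $J^0$-coordinates of $\Phi(t,t_0)v$ approach convex combinations of $x^*_1(v),\dots,x^*_c(v)=0$, i.e.\ they vanish, giving $v\in\Null_{t_0}(A)$.

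It then remains to show $\xi^1,\dots,\xi^c$ are linearly independent, which makes $L$ surjective and $\dim\ker L=N-c$. Linear independence should follow from the separation of the jets: since $V(J^k,J^s)<\infty$ for $k\ne s$ while $\sum_{i\in J^k(t)}\pi_i(t)\to\pi^*_k>0$, the ``mass'' of the limiting row vector associated with $J^k$ asymptotically concentrates inside $J^k(s)$ as $s$ grows, and the sets $J^k(s)$ are pairwise disjoint; transporting these almost-disjointly-supported vectors back to time $t_0$ by the fixed matrix $\Phi'(s,t_0)$ preserves independence once $t_0$ has been chosen past the point where $\dim(\Null_\tau(A))$ stabilizes (this is exactly where the choice of a large $t_0$ is used). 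Alternatively, I would invoke the discrete-time analogue of the geometric framework of Section \ref{a geometric framework}: Step~2 shows $C_{t_0}$ has no vertices beyond $\xi^1,\dots,\xi^c$, and the argument of Lemma \ref{independence}, valid for large $t_0$, yields both their independence and $\dim(\Null_{t_0}(A))=N-\dim\mathrm{span}\{\xi^k\}=N-c$ directly. Combining the two steps, $\Nullity(A)=\dim(\Null_{t_0}(A))=N-c$, so $\Rank(A)=c$, which is precisely the claim that the jet-partition consists of $J^0$ and $\Rank(A)$ further jets.

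I expect the main obstacle to be the treatment of $J^0$ in Step~2 (and the closely linked ``no extra vertices of $C_{t_0}$'' point): Sonin's theorem says nothing about opinions of agents in the transient jet $J^0(t)$, so one has to argue separately that the backward chain started in $J^0(t)$ is, in the limit, absorbed by the recurrent jets --- a careful flow-and-mass-conservation argument using $V(J^0,\V\setminus J^0)<\infty$ and $\sum_{i\in J^0(t)}\pi_i(t)\to0$. The linear-independence step is a secondary technical hurdle, handled either by the jet-concentration estimate or by porting the polytope machinery of Section \ref{a geometric framework} to discrete time for $t_0$ large.
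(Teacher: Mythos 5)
Your overall strategy (identify Sonin's $c$ with $\Rank(A)$ by realizing $\Null_{t_0}(A)$ as the kernel of the map $v\mapsto((\xi^1)'v,\dots,(\xi^c)'v)$ for large $t_0$) is sound in outline and is genuinely different in form from the paper's proof, which is a two-line appeal to an external remark of \cite{Bolouki:14a} together with Theorem \ref{geo-d}. But the step you yourself flag as the main obstacle is a real gap, not a technicality, and it is precisely the content the paper outsources. Your reverse inclusion $\ker L\subseteq\Null_{t_0}(A)$ is equivalent to the assertion that the limiting polytope $C_{t_0}$ has no vertices beyond $\xi^1,\dots,\xi^c$, i.e.\ that no ``phantom vertex'' is sustained entirely by rows indexed by $J^0(t)$. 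You assert that $V(J^0,J^k)<\infty$ together with $\sum_{i\in J^0(t)}\pi_i(t)\to0$ makes $J^0$ ``transient for the reversed chain'' and hence forces the $J^0$-rows of $\Phi(t,t_0)$ into $\mathrm{conv}\{\xi^1,\dots,\xi^c\}$. The bridge is missing: $V(J^0,J^k)$ is a sum of terms $r_{ij}(t)=\pi_j(t+1)a_{ji}(t)$, and for $j\in J^0(t+1)$ the weights $\pi_j(t+1)$ are exactly the quantities that vanish, so finiteness of $V(J^0,\V\setminus J^0)$ gives essentially no control on $\sum_t a_{ji}(t)$ for $j\in J^0$ --- neither on how much $J^0$ agents listen to recurrent agents nor on how much they listen to one another. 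One must instead rule out an asymptotically self-sustaining subcommunity inside $J^0$ whose rows accumulate at a point outside $\mathrm{conv}\{\xi^k\}$ (e.g., by showing such a subcommunity would support an absolute probability sequence with non-vanishing mass, contradicting membership in $J^0$, or by invoking the characterization in \cite{Bolouki:14a}). As written, Step 2 is asserted rather than proved.

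A secondary problem is the linear-independence step. Your first route, ``transporting these almost-disjointly-supported vectors back to time $t_0$ by the fixed matrix $\Phi'(s,t_0)$ preserves independence,'' implicitly uses invertibility of $\Phi(s,t_0)$, which fails in discrete time --- indeed the possible singularity of the transition matrices is exactly why $c_\tau$ can grow with $\tau$ in this setting. The concentration estimate itself can be salvaged directly at base time $t_0$: since $\sum_{i\in J^k(t)}\pi_i(t)\to\pi^*_k>0$ and the tail flow $V$ out of $J^k$ past time $t_0$ is small for $t_0$ large, the $\pi$-weighted average of the $J^k(t)$-rows (which converges to $\xi^k$) places mass at least $1-V_{\ge t_0}(J^k,\bar J^k)/\pi^*_k$ on the coordinates $J^k(t_0)$, and the sets $J^k(t_0)$ are disjoint; this yields independence without any inverse map. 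Your alternative route via the discrete-time analogue of Lemma \ref{independence} gives independence of the vertices of $C_{t_0}$, but that only helps once you have already shown the vertices are exactly the $\xi^k$, which circles back to the gap above. In short: correct skeleton, genuinely more self-contained than the paper's citation-based proof, but the decisive absorption claim for $J^0$ is left unproven and the proposed mechanism for it does not obviously work.
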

\begin{proof}
  Theorem \ref{rank jets} is an immediate result of \cite[Remark 2]{Bolouki:14a} combined with Theorem \ref{geo-d}, that will be stated later in the paper.
\end{proof}


\subsection{A Geometric Interpretation}
\label{geo-int-discrete}

We developed, in Section \ref{a geometric framework}, a geometric framework, that interprets the $\text{rank}$ of the underlying chain of a network, based on the state transition matrix of the network, i.e, $\Phi(t,\tau)$. A similar argument can be made for the discrete time case, with the state transition matrix expressed as (\ref{phi discrete}). The only difference here is that $c_{\tau}$,  which is the number of vertices of limiting polytope $C_{\tau}$, is not invariant as $\tau$ grows. As a matter of fact, it can be shown that:
\begin{equation}
  c_{\tau} = N - \dim (\mathcal{N}_{\tau}(A)).
  \label{708}
\end{equation}
Therefore, $c_{\tau}$ is a non-decreasing function of $\tau$ and becomes constant after a finite time since it is bounded above by $N$. In correspondence to Theorem \ref{rank=c}, we have the following theorem:

\begin{theorem}
\label{geo-d}
  For the number of the vertices of limiting polytope $C_{\tau}$, $\tau \geq 0$, i.e., $c_{\tau}$:
\begin{equation}
  \lim_{\tau \rightarrow \infty}c_{\tau} = \Rank(A).
  \label{709}
\end{equation}
Consequently, there exist $t_0 \geq 0$ such that $c_{\tau}$ is equal to $\Rank(A)$ for every $\tau \geq t_0$.
\end{theorem}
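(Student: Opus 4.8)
The plan is to read off Theorem~\ref{geo-d} from the identity~(\ref{708}), namely $c_\tau = N - \dim\big(\Null_{\tau}(A)\big)$, together with the definition~(\ref{lim null}) of $\Nullity(A)$; once~(\ref{708}) is granted, the theorem is pure bookkeeping about a monotone integer sequence.

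Carrying this out: $\dim\big(\Null_{\tau}(A)\big)$ is non-increasing in $\tau$, integer-valued, and bounded below by $0$, hence eventually constant, and by~(\ref{lim null}) its eventual value is $\Nullity(A)$; also $\Rank(A)=N-\Nullity(A)$. Substituting into~(\ref{708}), for all sufficiently large $\tau$ we obtain $c_\tau = N-\dim\big(\Null_{\tau}(A)\big)=N-\Nullity(A)=\Rank(A)$. This gives the limit~(\ref{709}) and, taking $t_0$ to be any time past which $\dim\big(\Null_{\tau}(A)\big)$ has stabilized, the ``consequently'' clause as well; it is also consistent with the earlier observation that $c_\tau$ is non-decreasing in $\tau$ (being the complement of a non-increasing sequence) and bounded above by $N$.

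The real content therefore sits in~(\ref{708}), which I would prove as the discrete-time analogues of Lemmas~\ref{space-generated} and~\ref{independence} at a fixed $\tau$. The first part --- that $u\in\Null_{\tau}(A)$ iff $u$ is orthogonal to every vertex of $C_\tau$, so that $\dim\big(\Null_{\tau}(A)\big)=N-\dim\,\mathrm{span}\{v_1,\dots,v_{c_\tau}\}$ --- transfers verbatim, since its proof uses only that the rows of $\Phi(t,\tau)$ lie in the nested polytopes $C_{t,\tau}$ collapsing onto $C_\tau$ and that suitable subsequences of rows converge to each vertex. The delicate part is linear independence of the vertices of $C_\tau$. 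The continuous-time proof of Lemma~\ref{independence} exploited invertibility of the state-transition matrix twice (in Lemma~\ref{fixed vertices}, to biject vertices at different times, and in the last step, to cancel $\Phi'(T,0)$ from $\Phi'(T,0)\big(\sum_i\alpha_i u_i\big)=0$); in the discrete case $\Phi(t,\tau)$ may be singular, so one only has that $\Phi'(T,\tau)$ carries the vertices of $C_T$ onto a \emph{spanning} set of $C_\tau$, and $c_\tau$ may genuinely grow with $\tau$. I expect this to be the main obstacle: the $\epsilon$-strip argument of Lemma~\ref{independence} must be run entirely at the single level $\tau$, using the pairwise-disjoint supports $S^i$ and the concentration estimates~(\ref{w})--(\ref{w1}), and the contradiction to a nontrivial relation $\sum_i\alpha_i v_i=0$ must be closed directly among quantities living at time $\tau$ rather than being transported forward in time and then inverted. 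The remaining structure --- existence of $C_\tau=\lim_{t\to\infty}C_{t,\tau}$ with at most $N$ vertices and the monotone nesting of the $C_{t,\tau}$ --- goes through exactly as in Section~\ref{a geometric framework}.
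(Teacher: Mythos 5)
Your proof is correct and takes essentially the same route as the paper, whose entire argument for Theorem~\ref{geo-d} is simply to take the limit of both sides of~(\ref{708}) using the definition~(\ref{lim null}) of $\Nullity(A)$. Your additional discussion of how~(\ref{708}) itself should be established (and where the singularity of the discrete-time $\Phi(t,\tau)$ forces the argument to be run at a single level $\tau$) goes beyond the paper, which merely asserts~(\ref{708}) without proof, and correctly identifies the genuinely delicate point.
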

\begin{proof}
  (\ref{709}) is easily obtained by taking the limit of both sides of (\ref{708}) as $t \rightarrow \infty$.
\end{proof}
Similar to the continuous time case, we define ergodicity classes of a discrete time chain as equivalence classes resulted by the relation of being weakly mutually ergodic (see Definition \ref{ergodicity classes}). It can be shown, similar to the proof of Lemma \ref{ergod}, that $c_{\tau}$ for every $\tau \geq 0$ is less than or equal to the number of ergodicity classes (note that ergodicity classes are defined irrespective of the initial time). This, together with Theorem \ref{geo-d}, implies that the number of ergodicity classes being an upper bound for the $\text{rank}$, i.e., Corollary \ref{upper bound ergodic}, also holds in the discrete time case.

\subsection{Lower Bounds}

We stated, in Theorem \ref{lower-2} and Corollary \ref{lower-1}, lower bounds on the rank of a continuous time chain. The discrete time counterparts of these theorems are subsumed through an approach employing the notion of jets.

\begin{definition}
  For a jet $J$ in $\V$, let $U_{in}(J)$ denote the total influence of $\bar{J}$ on $J$ over the infinite time interval:
  \begin{equation}
    U_{in}(J) = \sum_{t=0}^{\infty} \sum_{i \in J(t+1)} \sum_{j \not\in J(t)} a_{ij}(t).
  \end{equation}
\end{definition}

\begin{theorem}
\label{lower-d}
  For a discrete time chain $\{ A(t) \}$, $\Rank(A)$ is greater than or equal to the maximum number of disjoint jets, say $J$, each of which satisfying:
  \begin{equation}
    U_{in}(J) < \infty.
  \label{605}
  \end{equation}
\end{theorem}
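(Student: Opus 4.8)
The plan is to combine the geometric framework of Subsection~\ref{geo-int-discrete} with a quantitative fact: a finite value of $U_{in}(J)$ forces the ``influence distributions'' $\Phi'_i(t,\tau)$ of the individuals $i$ in the jet $J$ to concentrate, at time $\tau$, on the coordinates in $J(\tau)$. Fix an arbitrary family $J^1,\dots,J^m$ of pairwise disjoint jets, each non-empty at every time, with $U_{in}(J^k)<\infty$ for all $k$; since the family is arbitrary it suffices to prove $m\le\Rank(A)$. The key estimate I would establish is that for every jet $J$ with $U_{in}(J)<\infty$, every $\tau\ge0$, every $t\ge\tau$ and every $i\in J(t)$,
\[
  \sum_{j\in J(\tau)}\Phi_{ij}(t,\tau)\ \ge\ 1-\epsilon_\tau(J),\qquad
  \epsilon_\tau(J):=\sum_{s=\tau}^{\infty}\sum_{i'\in J(s+1)}\sum_{j'\notin J(s)}a_{i'j'}(s)\ \le\ U_{in}(J),
\]
so that $\epsilon_\tau(J)\to0$ as $\tau\to\infty$, being the tail of a convergent series.

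To prove this I would show the sharper bound $\sum_{j\in J(\tau)}\Phi_{ij}(t,\tau)\ge 1-\delta_\tau(t)$, where $\delta_\tau(t):=\sum_{s=\tau}^{t-1}\sum_{i'\in J(s+1)}\sum_{j'\notin J(s)}a_{i'j'}(s)$ increases to $\epsilon_\tau(J)$, by induction on $t$ using $\Phi(t+1,\tau)=A(t)\Phi(t,\tau)$. The base case $t=\tau$ is immediate from $\Phi(\tau,\tau)=I$. For the step, with $i\in J(t+1)$ write $\sum_{j\in J(\tau)}\Phi_{ij}(t+1,\tau)=\sum_{\ell}a_{i\ell}(t)\sum_{j\in J(\tau)}\Phi_{\ell j}(t,\tau)$, discard the non-negative terms with $\ell\notin J(t)$, apply the hypothesis to the terms with $\ell\in J(t)$, and use $\sum_{\ell\in J(t)}a_{i\ell}(t)=1-\sum_{\ell\notin J(t)}a_{i\ell}(t)$ together with $(1-a)(1-b)\ge 1-a-b$ for $a,b\ge0$; the leftover one-step leakage $\sum_{\ell\notin J(t)}a_{i\ell}(t)$ is bounded by $\delta_\tau(t+1)-\delta_\tau(t)$ precisely because $i\in J(t+1)$, so the bound upgrades to $1-\delta_\tau(t+1)$. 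In words, for $i\in J(t)$ the stochastic vector $\Phi'_i(t,\tau)$ puts mass at least $1-\epsilon_\tau(J)$ on the coordinates indexed by $J(\tau)$.

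With the estimate in hand, fix $\tau$ large enough that $\epsilon_\tau(J^k)<1/2$ for all $k=1,\dots,m$. For each $k$ pick representatives $i_k(t)\in J^k(t)$; the vectors $\Phi'_{i_k(t)}(t,\tau)$ are stochastic, hence bounded, so along a common subsequence of $t$ they converge to stochastic vectors $w_1,\dots,w_m\in\mathbb{R}^N$ (which are limit points of columns of $\Phi'(t,\tau)$ and hence lie in the limiting polytope $C_\tau$). Two facts follow. First, by the estimate, $\sum_{j\in J^k(\tau)}(w_k)_j\ge 1-\epsilon_\tau(J^k)>1/2$. Second, for any $v\in\mathcal{N}_\tau(A)$ we have $\langle\Phi'_i(t,\tau),v\rangle=(\Phi(t,\tau)v)_i\to0$, whence $\langle w_k,v\rangle=0$, so $w_1,\dots,w_m\in\mathcal{N}_\tau(A)^{\perp}$. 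Because $J^1(\tau),\dots,J^m(\tau)$ are disjoint, the $m\times m$ matrix $W$ with $W_{k\ell}=\sum_{j\in J^k(\tau)}(w_\ell)_j\ge0$ satisfies $W_{\ell\ell}\ge 1-\epsilon_\tau(J^\ell)>1/2$ while $\sum_{k\ne\ell}W_{k\ell}\le\sum_{j\notin J^\ell(\tau)}(w_\ell)_j=1-W_{\ell\ell}\le\epsilon_\tau(J^\ell)<1/2$; thus $W$ is strictly column-diagonally dominant, hence invertible. Taking the inner product of a relation $\sum_\ell c_\ell w_\ell=0$ with the indicator of each $J^k(\tau)$ gives $Wc=0$, so $c=0$, i.e.\ $w_1,\dots,w_m$ are linearly independent. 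Therefore $m\le\dim\mathcal{N}_\tau(A)^{\perp}=N-\dim\mathcal{N}_\tau(A)\le N-\Nullity(A)=\Rank(A)$, where the middle inequality holds because $\dim\mathcal{N}_\tau(A)$ is non-increasing in $\tau$ with limit $\Nullity(A)$ (equivalently $c_\tau=N-\dim\mathcal{N}_\tau(A)$ is non-decreasing with limit $\Rank(A)$; cf.\ (\ref{708}) and Theorem~\ref{geo-d}). Taking the supremum over all admissible families $J^1,\dots,J^m$ finishes the argument.

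The one genuinely delicate point is the concentration estimate of the first step, specifically arranging the induction so that the accumulated one-step leakages telescope exactly into $U_{in}(J)$; everything after that (a compactness argument to extract the $w_k$, their orthogonality to $\mathcal{N}_\tau(A)$, and the diagonal-dominance argument for linear independence) is routine. A minor bookkeeping matter is the status of jets that are empty at some times or equal to $\V$: one restricts to proper jets, noting that an everywhere-empty jet carries no representative and that $J=\V$ only yields the harmless bound $\Rank(A)\ge1$.
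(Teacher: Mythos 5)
Your proof is correct, but it follows a genuinely different route from the paper's. The paper argues ``softly'': it perturbs $\{A(t)\}$ to a chain $\{B(t)\}$ by deleting all (finitely summable) influences entering each jet from its complement, invokes the invariance of $\Rank$ under $l_1$-approximation (Lemma \ref{rank and approximation}) to say the rank is unchanged, and then observes that in the decoupled chain each jet evolves autonomously, so any EGC must contain a member of every $J^s(t_0)$; the bound then follows from the EGC characterization of rank. You instead work directly with the original chain: you prove a quantitative concentration estimate showing that $U_{in}(J)<\infty$ forces the rows $\Phi'_i(t,\tau)$, $i\in J(t)$, to place mass at least $1-\epsilon_\tau(J)$ on $J(\tau)$ (with $\epsilon_\tau(J)$ the tail of a convergent series), extract limit vectors $w_1,\dots,w_m$ lying in $\Null_\tau(A)^{\perp}$, and establish their linear independence by column diagonal dominance, giving $m\le N-\dim(\Null_\tau(A))\le\Rank(A)$. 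I checked the delicate telescoping induction ($\Phi(t+1,\tau)=A(t)\Phi(t,\tau)$ plus $(1-a)(1-b)\ge 1-a-b$, with the one-step leakage of $i\in J(t+1)$ absorbed into $\delta_\tau(t+1)-\delta_\tau(t)$) and it goes through. Your approach is longer but self-contained: it does not rely on the $l_1$-invariance lemma (whose proof the paper omits) nor on the EGC--rank equivalence, it yields the sharper statement $m\le c_\tau=N-\dim(\Null_\tau(A))$ for every $\tau$ with $\epsilon_\tau(J^k)<1/2$, and it exhibits explicit independent directions in $\Null_\tau(A)^{\perp}$ attached to the jets. The paper's argument is shorter and reuses established machinery. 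Both proofs share the same implicit hypothesis, which you rightly flag and the paper does not: the jets must be non-empty (proper), since otherwise empty jets trivially satisfy (\ref{605}) and make the ``maximum number of disjoint jets'' unbounded.
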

\begin{proof}
  The proof of Theorem \ref{lower-d} is similar to that of Theorem \ref{lower-2}. For chain $\{A(t)\}$, let $J^1,\ldots,J^d$ be $d$ disjoint jets. Form a chain $\{B(t)\}$ from chain $\{A(t)\}$ by eliminating all interactions between any two distinct jets among $J^1,\ldots,J^d$ over the infinite time interval. Since $\{B(t)\}$ is an $l_1$-approximation of $\{A(t)\}$, the two chains share the same $\text{rank}$, as well as the same collections of disjoint jets. Therefore, it is sufficient to prove Theorem \ref{lower-d} for chain $\{B(t)\}$. Note that for chain $\{B(t)\}$, for every $s \neq k$, $1 \leq s,k \leq d$, we have:
\begin{equation}
  \sum_{t=0}^{\infty} \left[\sum_{i \in J^s(t+1)}\sum_{j \in J^k(t)} b_{ij}(t) + \sum_{i \in J^k(t+1)}\sum_{j \in J^s(t)} b_{ij}(t) \right] = 0.
\label{U is 0}
\end{equation}
We now consider an opinion network with underlying chain $\{B(t)\}$. Keeping Theorem \ref{rank=EGC} in mind, it suffices to show that the size of the smallest EGC of the opinion network defined over chain $\{B(t)\}$ is at least $d$. Consider a particular EGC of the opinion network defined over chain $\{B(t)\}$. By definition, that particular EGC is able to create global consensus under certain circumstances for infinitely many choices of initial time. Let $t_0 \geq 0$ be one of those infinitely many possible choices of initial time. Relation (\ref{U is 0}) means that for any jet among $J^1,\ldots,J^d$, say $J^s$, the opinions of individuals in $J^s(t)$, $\forall t \geq t_0$, only depend on the opinion of individuals in $J^s(t_0)$. Therefore, that particular EGC must contain at least one of the individuals in $J^s(t_0)$ or else it would have no control on the opinion of individuals in jet $J^s$ at any future time. Thus, the size of that particular EGC is greater than or equal to $d$, which is the number of disjoint jets $J^1,\ldots,J^d$. This proves the theorem.
\end{proof}

Theorem \ref{lower-d} would serve as the discrete time counterpart of Theorem \ref{lower-2}, if the choice of jets were limited to the time-invariant jets.


We skip the analysis of time-invariant discrete time chains, since it is no different from its continuous time counterpart.


\subsection{Rank of Discrete Time Chains in Class $\mathcal{P}^*$}

We, first, briefly discuss the limiting behavior of a discrete time chain $\{ A(t) \}$ in Class $\mathcal{P}^*$ from two viewpoints: (i) The Sonin D-S theorem; (ii) The geometric viewpoint. Given that $\{ A(t) \}$ belongs to Class $\mathcal{P}^*$, there is a representation of Sonin's jet decomposition without a $J^0$ jet. Therefore, each individual lies within $\cup_{k=1}^c J^k(t)$ for any $t \geq 0$, with $c$ being equal to $\Rank(A)$. Thus, the opinion of each individual stays arbitrarily close to set $\{ x^*_k\, |\, 1 \leq k \leq c\}$, with size $\Rank(A)$, as $t$ grows large. Considering now the geometric viewpoint, we focus on limiting polytopes $C_{\tau}$ as discussed in Section \ref{geo-int-discrete}. For the discrete time case, it was pointed out that the number of vertices of $C_{\tau}$ is non-decreasing and becomes constant past a finite time $t_0 \geq 0$, with $\Rank(A)$ being that constant. As proved in \cite{Bolouki:14a}, if $\{ A(t) \}$ is in Class $\mathcal{P}^*$, for every arbitrary fixed $\tau \geq t_0$, every column of $\Phi'(t,\tau)$ stays arbitrarily close to the $\Rank(A)$ vertices of $C_{\tau}$ as $t$ grows large. Since $x(t) = \Phi(t,\tau) x(\tau)$, each column $i$ of $\Phi'(t,\tau)$ (row $i$ of $\Phi(t,\tau)$) is in correspondence with the opinion of an individual $i$. Thus, columns of $\Phi'(t,\tau)$ staying arbitrary close to the $\Rank(A)$ vertices of $C_{\tau}$ as $t \rightarrow \infty$, leads to the same conclusion from the other point of view, that is the opinions staying arbitrary close to a set of $\Rank(A)$ (generally distinct) values. Thus, to sum up, although convergence of each individual's opinion is not guaranteed here unlike the continuous time case, there is a \textit{finite number} of accumulation points for the opinions over the infinite time interval, and that finite number is $\Rank(A)$.

Now reconsider jet-partition $(J^1,\ldots,J^c)$ in the Class $\mathcal{P}^*$ based jet-decomposition of the Sonin D-S Theorem. According to the Sonin D-S Theorem, for every two jets $J^k$ and $J^s$, we have:
\begin{equation}
  V(J^k,J^s) < \infty.
  \label{v-sonin}
\end{equation}
Recalling (\ref{r-p-a}) and taking into account that $\pi_j(t+1)$ in (\ref{r-p-a}) is greater than or equal to some $p^* > 0$ since chain $\{A(t)\}$ has been assumed to be in Class $\mathcal{P}^*$, inequality (\ref{v-sonin}) implies that the total interaction between any two jets $J^k$ and $J^s$ is finite over the infinite time interval, i.e.,
\begin{equation}
  \sum_{t=0}^{\infty} \left[\sum_{i \in J^{k+1}(t)}\sum_{j \in J^s(t)} a_{ij}(t) + \sum_{i \in J^{s+1}(t)}\sum_{j \in J^k(t)} a_{ij}(t) \right] < \infty.
\label{total-int}
\end{equation}
Fix an arbitrary $k$, and consider the set of inequalities obtained as $s \neq k$ goes from 1 to $c$ in (\ref{total-int}). Adding the $c-1$ obtained inequalities of type (\ref{total-int}), and noting that $J^1,\ldots,J^c$ is a jet partition of $\V$, we conclude that the total interaction between $J^k$ and $\bar{J}^k$, and in particular the total influence of $\bar{J}^k$ over $J^k$, is also finite over the infinite time interval. Therefore, for each of disjoint jets $J^1,\ldots,J^c$, say $J^k$, $V_{in}(J^k) < \infty$ (see (\ref{605})). Thus, recalling $\Rank(A) = c$, we conclude that the lower bound provided in Theorem \ref{lower-d} is achieved for discrete time chains in Class $\mathcal{P}^*$.


\subsection{Full-Rank Chains}

One characterizes full-rank discrete time chains according to the following theorem.

\begin{theorem}
\label{full-rank d}
  A discrete time chain $\{ A(t) \}$ is full-rank, i.e., $\Rank(A)=N$ if and only if $\{ A(t) \}$ is an $l_1$-approximation of a permutation chain, i.e., a chain of permutation matrices.
\end{theorem}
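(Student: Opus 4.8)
The plan is to mirror the continuous‑time argument of Theorem~\ref{full-rank thm}, replacing the role played there by the neutral chain $\mathbf{0}_{N\times N}$ with a chain of permutation matrices; permutation matrices are the natural discrete‑time stand‑in, being exactly the stochastic matrices that act as invertible isometries. For sufficiency I would first observe that a permutation chain $\{P(t)\}$ is full‑rank: its state transition matrix $\Phi_P(t,\tau)=P(t-1)\cdots P(\tau)$ is again a permutation matrix, hence norm‑preserving, so $\Null_\tau(P)=\{\mathbf{0}_N\}$ and $\Rank(P)=N$; the discrete‑time version of Lemma~\ref{rank and approximation} then transfers this to any $l_1$‑approximation of $\{P(t)\}$.

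For necessity, assume $\Rank(A)=N$. First I would invoke Theorem~\ref{geo-d} to get a $t_0$ with $c_\tau=N$ for all $\tau\ge t_0$, so that (by the discrete analogue of Lemma~\ref{independence}) the limiting polytope $C_\tau$ has $N$ linearly independent stochastic vertices, and, arguing as in the proof of Lemma~\ref{independence}, for each fixed $\tau\ge t_0$ and all large $t$ the rows of $\Phi(t,\tau)$ are — up to relabelling the agents by a permutation — within an arbitrarily small distance of those $N$ vertices. The key reduction I would then carry out is: for each $\varepsilon>0$ there is $T\ge t_0$ such that $\Phi(t,\tau)$ lies within $\varepsilon$ of a permutation matrix for all $t\ge\tau\ge T$ (equivalently, the vertices of $C_\tau$ approach the standard basis of $\mathbb{R}^N$ as $\tau\to\infty$), extracted from the column‑sum estimate of Lemma~\ref{inf} together with the stabilization $c_\tau\equiv N$. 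Granting this, every column sum of $\Phi(t,\tau)$ exceeds $1-N\varepsilon$ for $t\ge\tau\ge T$; replacing $A(t)$ by $I_{N\times N}$ for $t<T$ produces a chain $\{B(t)\}$ that is an $l_1$‑approximation of $\{A(t)\}$ (so $\Rank(B)=N$) and, via $\Phi_B(t,\tau)=\Phi_B(t,T)\Phi_B(T,\tau)$, has all column sums of all $\Phi_B(t,\tau)$ bounded below by $1-N\varepsilon>0$; the discrete version of Lemma~\ref{pstar} then puts $\{B(t)\}$ in Class~$\mathcal{P}^*$.

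From here I would finish with the jet machinery. Applying the Sonin D--S Theorem to $\{B(t)\}\in\mathcal{P}^*$, there is no $J^0$ jet and, by Theorem~\ref{rank jets}, the jet‑partition is $(J^1,\dots,J^N)$ with $\lim_t\sum_{i\in J^k(t)}\pi_i(t)=\pi^*_k>0$ for each $k$ (positivity being forced for $\mathcal{P}^*$ chains by $\pi_i(t)\ge p^*$ and the uniqueness clause). Since $N$ non‑empty sets partition the $N$‑element set $\V$ at each large time, each $J^k(t)$ is a singleton, say $J^k(t)=\{\rho_t(k)\}$ with $\rho_t$ a permutation; property~(ii) of the theorem gives $V(J^k,J^s)<\infty$ for $k\ne s$, which — since $\pi_j(t+1)\ge p^*>0$, exactly as in the discrete Class~$\mathcal{P}^*$ subsection of Section~\ref{Class P*} — yields $U_{in}(J^k)<\infty$ for all $k$. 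In the singleton regime $U_{in}(J^k)$ equals $\sum_t\bigl(1-b_{\rho_{t+1}(k),\rho_t(k)}(t)\bigr)$ up to finitely many terms, so summing over $k$ shows $\sum_t\sum_k\bigl(1-b_{\rho_{t+1}(k),\rho_t(k)}(t)\bigr)<\infty$. Defining $P(t)$ to be the permutation matrix with $P(t)_{\rho_{t+1}(k),\rho_t(k)}=1$ for every $k$ (and $P(t)=I_{N\times N}$ before the singleton regime starts), each row of $B(t)$ is within $\ell_1$‑distance $2\bigl(1-b_{\rho_{t+1}(k),\rho_t(k)}(t)\bigr)$ of the corresponding row of $P(t)$, whence $\sum_t\|B(t)-P(t)\|<\infty$; so $\{B(t)\}$, and therefore $\{A(t)\}$, is an $l_1$‑approximation of the permutation chain $\{P(t)\}$, $l_1$‑approximation being an equivalence relation.

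The hard part will be the reduction in the second paragraph: showing that a full‑rank chain has its transition matrices uniformly close to permutation matrices once the initial time is taken large enough. In continuous time this is easy because $\lim_t\Phi'(t,0)$ exists, which forces $\Phi(t,\tau)$ close to the identity for large $t,\tau$; in discrete time the rows of $\Phi(t,\tau)$ may keep permuting and $\lim_t\Phi'(t,\tau)$ need not exist, so the statement has to be coaxed out of the stabilization of $c_\tau$ and the absolute‑probability‑sequence bound of Lemma~\ref{inf}, and is the point at which the discrete argument genuinely departs from its continuous counterpart.
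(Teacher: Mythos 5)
The paper does not actually supply a proof of Theorem~\ref{full-rank d}; it only declares it ``very similar'' to the continuous-time Theorem~\ref{full-rank thm}, so the comparison is really against that proof. Your architecture mirrors it faithfully (sufficiency via invariance of $\Rank$ under $l_1$-approximation; necessity by truncating to obtain a Class~$\mathcal{P}^*$ chain and reading off its structure), and your ending is in fact a \emph{necessary} departure rather than a cosmetic one: in discrete time the rank of a Class~$\mathcal{P}^*$ chain is not the number of connected components of the infinite flow graph (the constant swap chain on two agents is full-rank and in Class~$\mathcal{P}^*$ yet has a connected infinite flow graph), so the continuous-time finish via Theorem~\ref{cont} cannot be transcribed, and your replacement by the Sonin jet decomposition --- $N$ jets, hence eventually singleton jets $J^k(t)=\{\rho_t(k)\}$, hence $U_{in}(J^k)=\sum_t\bigl(1-b_{\rho_{t+1}(k),\rho_t(k)}(t)\bigr)<\infty$, hence $l_1$-closeness to the permutation chain built from $\rho_{t+1}\circ\rho_t^{-1}$ --- is correct and is the right move.

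The gap is the one you flag yourself: the claim that full rank forces $\Phi(t,\tau)$ to be uniformly $\varepsilon$-close to a permutation matrix for all $t\ge\tau\ge T$. You propose to extract it from Lemma~\ref{inf} and the stabilization $c_\tau\equiv N$, but neither ingredient delivers it: $c_\tau=N$ only says that the limiting polytope $C_\tau$ has $N$ linearly (equivalently, affinely) independent stochastic vertices, and such a polytope can sit well inside the simplex --- e.g.\ the triangle with vertices $\tfrac14(e_i+\mathbf{1}_3)$ --- so independence alone does not push the vertices toward the standard basis. The claim is nevertheless true, and one way to close it is via determinants: full rank makes $\Phi(t,T_0)$ invertible with $\inf_t|\det\Phi(t,T_0)|>0$ (otherwise a kernel vector of a subsequential limit of the $\Phi(t,T_0)$ would lie in $\Null_{T_0}(A)$, since $\|\Phi(t,T_0)v\|_\infty$ is non-increasing in $t$), whence $\sum_s\bigl(1-|\det A(s)|\bigr)<\infty$ and $|\det\Phi(t,\tau)|\ge\prod_{s\ge\tau}|\det A(s)|\to 1$ uniformly in $t$ as $\tau\to\infty$; and a stochastic matrix with $|\det|\ge 1-\delta$ has, by Hadamard's inequality $|\det A|^2\le\prod_i\max_j a_{ij}$, each row within $O(\delta)$ of a distinct standard basis vector, i.e.\ it is $O(\delta)$-close to a permutation matrix. (This estimate in fact gives the whole necessity direction directly, since summability of $1-|\det A(s)|$ already bounds $\sum_s\|A(s)-P(s)\|$, making the Class~$\mathcal{P}^*$/jet detour optional.) Until that step is supplied, the necessity half of your argument is incomplete.
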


\begin{proof}
  The proof of Theorem \ref{full-rank d}, which is the discrete time version of Theorem \ref{full-rank thm}, is omitted since the proofs of the two theorems are very similar.
\end{proof}


\section{Conclusion}
\label{conclusion}

We considered a network of multiple individuals with opinions updated via a general time-varying continuous or discrete time linear algorithm. The notion of EGC, an acronym associated with \'Eminence Grise Coalition, in the network was defined as follows. Given the time that network starts to update, an EGC is a subgroup of individuals who, cooperatively, can manage to create a global consensus on any desired opinion in the network only by adequately setting their initial opinions assuming that they are aware of the underlying chain of the network as well as the rest of individuals initial opinions. The size of the smallest EGC can be treated as a characteristic of the underlying update chain of the network. We then introduced an extension of the notion of rank, from an individual matrix related notion to one related to a Markov chain in continuous or discrete time. A key result is that the $\text{rank}$ of the underlying chain of a network is also the size of its smallest EGC in the continuous time case. The same holds in the discrete time case provided the initial time is ``sufficiently large'' in a sense made precise in the paper. Geometrically, and associated with the chain, one can define a monotone decreasing convex hulls (polytopes) generated by an underlying  sequence of vertices. The $\text{rank}$ of the chain is the limiting number of linearly independent vertices in the sequence of polytopes, which is reached in finite time.

The continuous time case is peculiar in the sense that the $\text{rank}$ (number of linearly independent vertices) of the elements of the polytopic sequence remains constant, while it is monotonically increasing in the discrete time case. This, in turn, makes consensus behavior somewhat simpler in continuous time than in discrete time. A collection of upper and lower bounds on the $\text{rank}$ was also established. These two bounds are shown to be equal to the $\text{rank}$ for both time invariant chains (possibly not in Class $\mathcal{P}^*$), as well as for Class $\mathcal{P}^*$ chains in the time inhomogeneous case.

From a practical standpoint, this work establishes the rather intuitive result that the less ``natural'' dissension exists in an opinion network, the easier it is to steer the network towards global consensus. In cases where an ``average'' amount of natural dissonance exists, then the theory points at the need to minimally ``infiltrate'' identifiable dissenting clusters and work from the inside so to speak to steer the global opinion to a consensus. Success in doing so hinges on an ability to enlist key agents cooperation given that they must act as a ``grand coalition'' of key agents. This in turn opens the door to games over opinion networks whereby key agents might choose to break up into smaller coalitions and work towards conflicting goals. This will be the subject of future research. Another direction for future research is that of developing simple algorithms to identify key agents in the opinion network. Finally, a question of mathematical interest is the following:

Given an arbitrary non-ergodic time-varying chain, what is the sparsest time-invariant chain such that sum of the two chains becomes ergodic? There seems to be a relationship between the sparsity index of the corresponding graph of the sparsest time-invariant chain and the $\text{rank}$ of the time-varying chain.


\bibliography{EGCsep14}
\bibliographystyle{IEEEtran}


\end{document}